\renewcommand\thesection{\Roman{section}} 
\renewcommand\thesubsection{\Alph{subsection}} 
\titleformat{\section}[block]{\scshape\centering}{\thesection.}{1em}{} 
\titleformat{\subsection}[block]{\it}{\thesubsection.}{1em}{} 
\newtheorem{defn}{Definition}
\newtheorem{cor}{Corollary}
\newtheorem{lem}{Lemma}
\newtheorem{thm}{Theorem}
\newtheorem{ass}{Assumption}
\newtheorem{rem}{Remark}
\definecolor{darkgreen}{rgb}{0,0.4,0}
\definecolor{darkgray}{rgb}{0.3,0.3,0.3}
\newcommand\TRANSP{{\top}}
\newcommand\pplus{+\!\!+}
\title{\fontsize{22pt}{16pt}\selectfont{\bf\normalfont Relaxed Logarithmic Barrier Function Based\\ Model Predictive Control of Linear Systems}} 
\author{Christian~Feller~and~Christian~Ebenbauer %
\thanks{This work was supported by the Deutsche Forschungsgemeinschaft
(Emmy-Noether-Grant, Novel Ways in Control and Computation, EB 425/2-1, and Cluster of Excellence in Simulation Technology,
EXC 310/2).}\\[2mm] 
\normalsize Institute for Systems Theory and Automatic Control \\
\normalsize University of Stuttgart, Pfaffenwaldring 9, 70550 Stuttgart, Germany \\ 
\normalsize e-mail:\,\{christian.feller,ce\}@ist.uni-stuttgart.de 
\vspace{-5mm}
}
\date{}
\begin{document}
\twocolumn[
  \begin{@twocolumnfalse}
    \maketitle
\vspace*{-0.25cm}
\begin{abstract}
In this paper, we investigate the use of relaxed logarithmic barrier functions in the context of linear model predictive control. 
We present results that allow to guarantee asymptotic stability of the corresponding closed-loop system, and discuss further properties like performance and constraint satisfaction in dependence of the underlying relaxation. 
The proposed stabilizing MPC schemes are not necessarily based on an explicit terminal set or state constraint and allow to characterize the stabilizing control input sequence as the minimizer of a globally defined, continuously differentiable, and strongly convex function. The results are illustrated by means of a numerical example.
\end{abstract}
\vspace*{0.6cm}
  \end{@twocolumnfalse}
]
\saythanks
%
%
\section{Introduction}
%
\lettrine[nindent=0em,lines=2]{T}he basic idea of model predictive control (MPC) is to obtain a stabilizing feedback law by solving at each sampling instant, in a receding horizon fashion, a suitable finite-horizon open-loop optimal control problem that is parametrized by the evolving system state.
Only the first element of the optimal control input is applied to the plant and the optimization is repeated at the next time step in a receding horizon fashion.
Based on the predicted dynamics of the system to be controlled, both a user-defined cost objective and potential constraints on the system states and input can be taken into account explicitly within the corresponding optimization problem.
There exist various theoretical results concerning the stability properties of the closed-loop system for both linear and nonlinear systems, and MPC can be considered as a widely accepted control concept that is more and more applied to industrial processes, see~\cite{mayne00} and \cite{qin03} as well as references therein. \\
However, there often is a gap between theoretical MPC concepts and their actual implementation, and several disadvantages remain.
For example, algorithmic aspects of the required optimization are usually not taken into account explicitly in the MPC design, which may result in suboptimal or even unstable closed-loop behavior.  
Furthermore, in the presence of disturbances, sensor outliers, or observer errors, the underlying open-loop optimal control problem may become infeasible, leading to a complete crash of the respective control algorithm.
In recent years, considerable research effort has been spent on designing efficient and reliable algorithmic MPC implementations that, if possible, still preserve some of the desired theoretical properties when running in closed-loop operation, see e.g. \cite{bemporadExplicitLQR,diehl05,zeilinger11,patrinos12,borrelliBook14}.  
One particular approach that allows to reduce the gap between MPC schemes on the one and MPC algorithms on the other hand is given by the concept of barrier function based MPC, which was introduced in~\cite{wills04} and has been extended recently in~\cite{feller13,feller14b,feller14c}. In barrier function based MPC schemes, the inequality constraints occurring in the open-loop optimal control problem are incorporated into the cost function by means of suitable barrier function terms like it is also done in interior-point optimization algorithms~\cite{nesterovBook}. As shown in the above references, barrier function based MPC approaches allow to guarantee asymptotic stability of the closed-loop system 
while at the same time reducing the underlying open-loop optimal control problem to an equality constrained or even unconstrained optimization problem. Thus, a barrier function based reformulation is already integrated into the MPC design, which then allows for an efficient implementation based on tailored numerical optimization techniques.
In particular, the barrier function based approach makes the open-loop optimal control problem accessible for continuous-time algorithms that asymptotically track the optimal solution of the unconstrained reformulation and, thus, allow to implement MPC without any iterative on-line optimization, see \cite{ohtsuka04,deHaan07,feller13,feller14}. 
However, two main disadvantages remain. On the one hand, the open-loop optimal control problem may still become infeasible since the barrier functions are only defined within the interior of the corresponding constraint sets. On the other hand, the barrier function based approach inherently requires the use of a terminal set or terminal equality constraint, which is not desirable, and also typically not used, in practice.\\[0.2cm]
In this paper, we show that both of these problems can be eliminated by 
making use of so-called relaxed logarithmic barrier functions, i.e., barrier functions that are smoothly extended by a suitable penalty term outside of the corresponding constraint set~\cite{benTal92,nash94,hauser06}.
We discuss suitable relaxation procedures (Section~\ref{sec:relBarrierIntro}) and present results that allow to guarantee asymptotic stability of the closed-loop system with and without making use of terminal sets (Sections \ref{sec:relBarrierStab1}, \ref{sec:relBarrierStab2}), to determine and control the maximal violation of input and state constraints in closed-loop operation~(Section~\ref{sec:relBarrierConstr}), and to recover the optimality properties of both nonrelaxed barrier function based and conventional linear MPC schemes~(Section~\ref{sec:relBarrierOpt}).
Furthermore, based on the presented results, we provide in Section~\ref{sec:relBarrierNumExample} a step-by-step procedure for the constructive design of the overall MPC scheme. \\
The basic idea of relaxed barrier function based MPC is closely related to approaches based on soft constraints or penalty functions, see e.g.~\cite{kerrigan00,zeilinger10}. However, in these approaches, closed-loop properties like stability or strict constraint satisfaction can usually only be guaranteed when making use of nonsmooth or even exact penalty functions.
The key feature of the relaxed barrier functions discussed in this paper is that they result in a smooth formulation of the overall problem while still allowing for an arbitrary close approximation of the corresponding nonrelaxed case. 
As a result, the stabilizing control input can be characterized as the minimizer of a globally defined, twice continuously differentiable, and strongly convex cost function, which makes it efficiently computable by standard nonlinear programming algorithms.
Some preliminary results on the concept of relaxed barrier function based MPC have been presented in~\cite{feller14}.  
In this paper we significantly extend these results and provide an in-depth study of several theoretical and practical aspects of the presented linear MPC schemes.
A particularly interesting new result is the insight that the use of relaxed barrier functions allows to ensure global asymptotic stability of the closed-loop system without making use of a terminal set.\\[0.3cm]
Throughout the paper we will make use of the following notation. $\mathbb{R}_+$, $\mathbb{R}_{+\!\!+}$, and $\mathbb{N}_+$ denote the sets of nonnegative real, strictly positive real, and strictly positive natural numbers. Furthermore, $\mathbb{S}^n_+$ and $\mathbb{S}^n_{\pplus}$ refer to the sets of positive semi-definite and positive definite matrices of dimension $n\in\mathbb{N}_+$. For any given matrix~$M$ or vector $v$, $M^i$ and $v^i$ refer to the $i$-th row or element and $\lVert x \rVert_M:=\sqrt{x^\TRANSP \! M x}$ for any $M\in\mathbb{S}_+$. A polytope is defined as the compact intersection of a finite number of halfspaces. For any arbitrary set $S$, the expression $S^\circ$ will denote the open interior and $\partial S$ the boundary. Moreover, $\mathds{1}:=\begin{bmatrix} 1 & \cdots & 1 \end{bmatrix}^\TRANSP$ of suitable dimension. 
\section{Problem Setup}
In this paper, we consider the control of linear time-invariant discrete-time systems of the form
\begin{equation}\label{eq:discreteSystem}
x(k+1)=Ax(k)+Bu(k)\, ,
\end{equation}
where $x(k) \in \mathbb{R}^n$ refers to the vector of system states and $u(k) \in \mathbb{R}^m$ refers to the vector of system inputs, both at time instant $k \geq 0$. Moreover, the matrices $A\in \mathbb{R}^{n\times n}$ and $B\in \mathbb{R}^{n\times m}$ describe the corresponding system dynamics, where we assume $(A,B)$ to be stabilizable. The control task is to regulate the system state to the origin while minimizing a given, user defined performance criterion and satisfying state and input constraints of the form $x(k) \in \mathcal{X}$ and $u(k) \in \mathcal{U}$ for all $k \geq 0$. Here, $\mathcal{X}\!\subset \mathbb{R}^n$ and $\,\mathcal{U}\!\subset \mathbb{R}^m$ are typically considered to be given convex sets that contain the origin in their interior. In this paper, we assume $\mathcal{X}$ and $\mathcal{U}$ to be polytopes defined as 
\begin{subequations}%
\begin{align}
 \mathcal{X}&=\{x \in \mathbb{R}^n: C_{\mathrm{x}} x \leq d_{\mathrm{x}}\}\, , \\ \mathcal{U}&=\{u \in \mathbb{R}^m: C_{\mathrm{u}} u \leq d_{\mathrm{u}}\} \, ,
\end{align} 
\end{subequations}%
where $C_{\mathrm{x}}\in\mathbb{R}^{q_{\mathrm{x}}\times n}$, $C_{\mathrm{u}}\in\mathbb{R}^{q_{\mathrm{u}}\times m}$ and $d_{\mathrm{x}}\in\mathbb{R}^{q_{\mathrm{x}}}_{+\!\!+}$, $d_{\mathrm{u}}\in\mathbb{R}^{q_{\mathrm{u}}}_{+\!\!+}$ with $q_x, q_u\in\mathbb{N}_+$.
In linear MPC, this problem setup is usually handled by solving at each sampling instant an open-loop optimal control problem of the form
\begin{subequations} \label{eq:OptProblemLin}
\begin{align}
J_N^{\ast}(x)&=\min_{\boldsymbol{u}} \sum_{k=0}^{N-1} \ell(x_k,u_k) + F(x_N) \\
\text{s.t.\ } \ &{x}_{k+1}=A{x}_{k}+B{u}_{k}, \ {x}_0=x \ , \\ 
&x_k \in \mathcal{X}, \ k=0,\dots, N-1, \ x_N \in \mathcal{X}_f \ ,\\
&u_k \in \mathcal{U}, \ k=0,\dots, N-1 \ .
\end{align}
\end{subequations}
for the current system state~$x=x(k)$ and a finite prediction and control horizon~$N \in\mathbb{N}_+$.
Here, the stage cost $\ell:\mathbb{R}^n\times \mathbb{R}^m\to \mathbb{R}_+$ and the terminal cost $F:\mathbb{R}^n\to\mathbb{R}_+$ are usually defined as $\ell(x,u)=\lVert x\rVert_Q^2+\lVert u\rVert_R^2$ and $F(x)=\lVert x\rVert_P^2$ for appropriately chosen weight matrices $Q\in \mathbb{S}^n_+$, $R \in \mathbb{S}^m_{\pplus}$, $P\in \mathbb{S}^n_{\pplus}$. Furthermore, $\boldsymbol{u}=\{u_0,\dots,u_{N-1}\}$ denotes the sequence of control inputs over the prediction horizon~$N$, while $\mathcal{X}_f$ refers to a closed and convex terminal constraint set that may be used to guarantee stability properties of the closed-loop system. 
Note that we make use of subindices to distinguish open-loop predictions~$x_k$,~$u_k$~from actual state and input trajectories~$x(k), u(k)$.
The control law is obtained by solving~(\ref{eq:OptProblemLin}) at each sampling instant $k\geq 0$ and applying  $u(k)=u_0^*(x(k))$ in a receding horizon fashion.
Sufficient conditions for the recursive feasibility of~(\ref{eq:OptProblemLin}) as well as for the asymptotic stability of the closed-loop system are summarized in~\cite{mayne00}.
In the following, we will refer to this setup as \emph{conventional} linear MPC. 
Moreover, we will often use $x^+$ to denote the successor state and write~(\ref{eq:discreteSystem}) as $x^+=Ax+Bu$. For given system state $x=x(k)$ and input sequence $\boldsymbol{u}=\{u_0,\dots,u_{N-1}\}$, the resulting open-loop state sequence is given by $\boldsymbol{x}(\boldsymbol{u},x)=\{x_0(\boldsymbol{u},x),\dots,x_N(\boldsymbol{u},x)\}$, where the elements $x_k(\boldsymbol{u},x)$, $k=0,\dots,N$ are given according to~(\ref{eq:OptProblemLin}b) with $x_0(\boldsymbol{u},x)=x$. For a given optimal input sequence $\boldsymbol{u}^*(x)$, we write $\boldsymbol{x}^*(x):=\boldsymbol{x}(\boldsymbol{u}^*(x),x)$ as well as $x_k^*(x):=x_k(\boldsymbol{u}^*(x),x)$. Sometimes we also drop the explicit dependence on the current system state for ease of notation.

\section{Preliminary Results}
We introduce in this section some concepts and results on nonrelaxed logarithmic barrier function based MPC that we will need in the remainder of the paper. In particular, we present a novel stability theorem which generalizes different existing ideas and is applicable to a wide class of barrier function based MPC approaches.
\subsection{Barrier Function Based Model Predictive Control}\label{sec:barrierStab}
The main idea in barrier function based MPC is to eliminate the inequality constraints from the above MPC open-loop optimal control problem by making use of suitable barrier functions with a corresponding weighting factor. Based on this idea, it is possible to reformulate problem~(\ref{eq:OptProblemLin}) as an equality constrained (or even unconstrained) strongly convex optimization problem, which can then be solved by means of tailored optimization procedures like, e.g., the Newton-method. In general, the exact solution to the original problem, and hence also the stability properties of the corresponding closed-loop system, are recovered when the weighting factor of the barrier functions approaches zero~\cite[ch. 11]{boydBook}. However, for an arbitrary but fixed nonzero weighting, as it will necessarily occur in all numerical implementations, stability of the origin is by no means guaranteed. Several approaches towards the stabilizing design of barrier function based MPC schemes have been presented in~\cite{wills04,feller13,feller14b,feller14c}. In the following, we shortly summarize the main aspects of barrier function based linear MPC and present a fairly general stability theorem for the considered problem setup with polytopic input and state constraints.
Details on the discussed MPC approaches can be found in the respective references. \\[0.2cm]
Let us consider in the following the barrier function based open-loop optimal control problem
\begin{subequations}\label{eq:OptProblemBarrier}
\begin{align}
\tilde{J}_N^{\ast}(x)&=\min_{\boldsymbol{u}}\sum_{k=0}^{N-1} \tilde{\ell}(x_k,u_k) + \tilde{F}(x_N)   \\
\mbox{s.\,t.} \ \ & {x}_{k+1}=A{x}_{k}+B{u}_{k}, \ {x}_0=x\,, 
\end{align}
\end{subequations}
where $ \tilde{\ell}(x,u):= \ell(x,u)+ \varepsilon B_{\mathrm{u}}(u)+\varepsilon B_{\mathrm{x}}(x)$ and $\tilde{F}(x):=F(x)+\varepsilon B_{\mathrm{f}}(x)$ are the modified stage and terminal cost terms for $\ell(x,u)=\|x\|_Q^2+\|u\|_R^2$ and $F(x)=\|x\|_P^2$, and $B_{\mathrm{u}}(\cdot)$, $B_{\mathrm{x}}(\cdot)$, and $B_{\mathrm{f}}(\cdot)$ are suitable convex barrier functions with domains $\,\mathcal{U}^\circ$, $\mathcal{X}^\circ$, and $\mathcal{X}_f^\circ$ with $B_{\mathrm{u}}(u) \to \infty$ for $u\to \partial \mathcal{U}$, $B_{\mathrm{x}}(x) \to \infty$ for $x\to \partial \mathcal{X}$, and $B_{\mathrm{f}}(x) \to \infty$ for $x\to \partial \mathcal{X}_f$. Furthermore, the positive scalar $\varepsilon >0$ is the barrier function weighting parameter which determines the influence of the barrier function values on the overall cost function. 
As outlined above, the goal is now to choose the problem parameters $P$ and $\mathcal{X}_f$ as well as convex barrier functions $B_{\mathrm{u}}(\cdot)$, $B_{\mathrm{x}}(\cdot)$, and $B_{\mathrm{f}}(\cdot)$ in such a way that a linear MPC scheme based on~(\ref{eq:OptProblemBarrier}) asymptotically stabilizes the origin for any arbitrary but fixed $\varepsilon \in \mathbb{R}_{+\!\!+}$.
If we want to employ standard MPC stability concepts which are based on using the value function $\tilde{J}_N^*(\cdot)$ as a Lyapunov function for the closed-loop system, this usually requires the overall cost function to be continuous as well as positive definite with respect to the origin. 
In order to ensure the latter property for general barrier functions, the concept of gradient recentered barrier functions has been introduced in~\cite{wills04}. In addition, a weighting based recentering approach for logarithmic barrier functions on polytopic sets has been proposed in~\cite{feller14c}. \vspace*{-0.2cm}
\begin{defn}[Recentered log barrier function~\cite{wills04,feller14c}] \label{def:recBarrier}
Let $\mathcal{P}=\{z\in\mathbb{R}^r: Cz\leq d\}$ with $C\in\mathbb{R}^{q\times r}$, $d\in \mathbb{R}^q_{+\!\!+}$ be a polytopic set containing the origin and let $\bar{B}: \mathcal{P}^\circ\to \mathbb{R}$, $\bar{B}(z)=\sum_{i=1}^q \bar{B}_i(z)$ with $\bar{B}_i(z)=-\ln(-C^iz+d^i)$ be the corresponding logarithmic barrier function. 
Then, the function ${B}_G:\mathcal{P}^\circ \to\mathbb{R}_+$ with 
\begin{equation}
{B}_G(z)=\bar{B}(z)-\bar{B}(0)-[\nabla \bar{B}(0)]^\TRANSP z
\end{equation}
defines the gradient recentered logarithmic barrier function for the polytopic set $\mathcal{P}$. Furthermore, ${B}_W:\mathcal{P}^\circ\to\mathbb{R}_+$ with 
\begin{equation}
{B}_W(z)=\sum_{i=1}^q(1+w^i)\left(\bar{B}_i(z)-\bar{B}_i(0) \right)
\end{equation}
defines the corresponding weight recentered logarithmic barrier function, where the weighting vector $w \in \mathbb{R}^q_+$ is chosen in such a way that\! $\nabla {B}_W(0)=\sum_{i=1}^q(1+w^i)\nabla \bar{B}_i(0)=0$.
\end{defn}
In principle, the recentering can be seen as a modification which preserves the main characteristics of the barrier function while ensuring that it is positive definite with respect to the origin.
Both recentering approaches can in general be applied to our problem setup and will result in a continuously differentiable cost function that is positive definite and strongly convex in the optimization variable~$\boldsymbol{u}$. Moreover, the stability results presented in this work do hold independently of the underlying recentering method.\\ 
Let us now turn towards the stability properties of the closed-loop system when applying the barrier function based MPC feedback $u(k)=\tilde{u}(x(k))$. 
\begin{defn} \label{def:feasSet}
For $N\in \mathbb{N}_+$, let the feasible set $\mathcal{X}_N$ be defined as 
$\mathcal{X}_N:=\{x \in\mathcal{X}: \exists\, \boldsymbol{u}=\{u_0,\dots,u_{N-1}\}$ such that $u_k \in \mathcal{U}$, $ x_k(\boldsymbol{u},x) \in \mathcal{X}$ for $k=0,\dots,N-1$ and $x_N(\boldsymbol{u},x) \in \mathcal{X}_f\}$.
\end{defn}
\begin{defn}
 In the following, the matrix $A_K:=A+BK$ describes the closed-loop dynamics for a given stabilizing local control law $u=Kx$ and $B_K(x):=B_{\mathrm{x}}(x)+B_{\mathrm{u}}(Kx)$ refers to the corresponding combined barrier function of input and state constraints for the set $\mathcal{X}_K:=\{x\in \mathcal{X}: Kx \in \mathcal{U}\}$.
\end{defn}
As outlined above, different approaches towards the stabilizing design of barrier function based MPC formulations, i.e., on how to choose the terminal cost matrix $P$, the terminal set $\mathcal{X}_f$, and the corresponding barrier function $B_{\mathrm{f}}(\cdot)$, have been presented in~\cite{wills04,feller13,feller14b} and \cite{feller14c}. In principle, all mentioned approaches are based on the idea of choosing the terminal set $\mathcal{X}_f$ as a positively invariant subset of the state space in which the function $B_K(\cdot)$, i.e., the influence of the input and state constraint barrier functions, can be upper bounded by a quadratic function. Then, based on this quadratic bound, the terminal cost matrix $P$ is computed in such a way that it compensates for this influence and ensures that the barrier function based terminal cost $\tilde{F}(\cdot)$ is a local control Lyapunov function for the auxiliary control law $u=Kx$. While the approaches in \cite{wills04} 
and \cite{feller13} make use of ellipsoidal terminal sets, the approaches presented in \cite{feller14b} and \cite{feller14c} are based on polytopic terminal sets. 
In the following, we present a set of sufficient stability conditions that is used in the remainder of this work and can be seen as a generalization of the ideas presented in the above references. \vspace*{-0.15cm}
 \begin{ass}\label{ass:barrierMPCStab}
 For $Q\in\mathbb{S}^n_{+}$, $R\in\mathbb{S}^m_{+\!\!+}$, $\varepsilon \in \mathbb{R}_{+\!\!+}$ and a given stabilizing local control gain $K\in\mathbb{R}^{m\times n}$ let the parameters of the barrier function based open-loop optimal control problem~(\ref{eq:OptProblemBarrier}) satisfy the following conditions.
 \begin{itemize}\setlength{\itemsep}{0pt}
 \item[\textbf{A1}:] The barrier functions $B_{\mathrm{u}}(\cdot)$ and $B_{\mathrm{x}}(\cdot)$ are recentered barrier functions for the sets $\mathcal{U}$ and $\mathcal{X}$, respectively.
 \item[\textbf{A2}:] There exists $M \in \mathbb{S}^{n}_+$, s.\,t. $B_K(x)\leq x^{\TRANSP}\!Mx \ \forall x \in \mathcal{N}$, where $\mathcal{N}\subset \mathcal{X}_K$, $0 \in \mathcal{N}^\circ$, is a convex, compact set.
 \item[\textbf{A3}:] The matrix $P\in\mathbb{S}^n_{+\!\!+}$ is a solution to the Lyapunov equation $P=A_K^{\TRANSP}PA_K^{}+K^{\!\TRANSP}\!RK+Q+\varepsilon M$.
  \item[\textbf{A4}:] The terminal set $\mathcal{X}_f$ is convex and compact with $0\in\mathcal{X}_f$, $\mathcal{X}_f \subset \mathcal{N}\subset \mathcal{X}_K$, and $x^+\!=A_Kx\in\mathcal{X}_f \ \forall x \in \mathcal{X}_f$.
 \item[\textbf{A5}:] The function $B_{\mathrm{f}}(\cdot)$ is a recentered barrier function  for the set $\mathcal{X}_f$ and $B_{\mathrm{f}}(A_Kx)-B_{\mathrm{f}}(x)\leq 0 $ $ \ \forall x\in \mathcal{X}_f^\circ$.
 \end{itemize}
 \end{ass}
Based on Assumption~\ref{ass:barrierMPCStab}, we can state the following result on the stability of the closed-loop system.
 \begin{thm}\label{thm:barrierMPCStab}
 Let Assumption~\ref{ass:barrierMPCStab} hold true and let the feasible set $\mathcal{X}_N$ defined according to Definition~\ref{def:feasSet} have nonempty interior. Then, the barrier function based MPC feedback $u(k)=\tilde{u}_0^*(x(k))$ resulting from problem~(\ref{eq:OptProblemBarrier}) asymptotically stabilizes the origin of system~(\ref{eq:discreteSystem}) under strict satisfaction of all input and state constraints for any initial condition $x(0)\in \mathcal{X}_N^\circ$.
 \end{thm}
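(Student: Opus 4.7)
The plan is a standard Lyapunov-based MPC stability argument, using the barrier-function value function $\tilde{J}_N^*(\cdot)$ as the Lyapunov candidate on $\mathcal{X}_N^\circ$. The three pillars I would establish are (i) recursive feasibility on $\mathcal{X}_N^\circ$, (ii) positive definiteness and continuity of $\tilde{J}_N^*$, and (iii) a strict descent inequality along closed-loop trajectories. Strict constraint satisfaction would then fall out essentially for free from the barrier singularities.

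For recursive feasibility, I would use the classical tail-extension construction: given the optimal sequence $\boldsymbol{u}^*(x)=\{u_0^*,\dots,u_{N-1}^*\}$ and corresponding open-loop state trajectory $\{x_0^*,\dots,x_N^*\}$, define the candidate input sequence at $x^+=Ax+Bu_0^*$ by $\tilde{\boldsymbol{u}}=\{u_1^*,\dots,u_{N-1}^*,Kx_N^*\}$. Since $x_N^*\in\mathcal{X}_f$ and A4 gives invariance of $\mathcal{X}_f$ under $A_K$, the new terminal state $A_K x_N^*$ lies again in $\mathcal{X}_f\subset\mathcal{X}_K\subset\mathcal{X}$, and the appended input $Kx_N^*\in\mathcal{U}$. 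All intermediate states/inputs already satisfied the constraints by optimality of $\boldsymbol{u}^*(x)$, so $\tilde{\boldsymbol{u}}$ is a feasible (indeed interior-feasible) candidate for problem~(\ref{eq:OptProblemBarrier}) at $x^+$, which in particular shows $x^+\in\mathcal{X}_N^\circ$. Furthermore, because $B_{\mathrm{u}}$, $B_{\mathrm{x}}$, and $B_{\mathrm{f}}$ diverge on the respective boundaries, any minimizer must have $u_k^*\in\mathcal{U}^\circ$ and $x_k^*\in\mathcal{X}^\circ$, so the closed-loop trajectory strictly satisfies all input and state constraints.

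Continuity and positive definiteness of $\tilde{J}_N^*$ on $\mathcal{X}_N^\circ$ follow from the fact that the cost is continuous, strongly convex in $\boldsymbol{u}$ (due to $R\succ 0$ and $P\succ 0$), and has a unique minimizer depending continuously on $x$; positive definiteness is guaranteed by A1 and A5, which ensure $B_{\mathrm{u}}(0)=B_{\mathrm{x}}(0)=B_{\mathrm{f}}(0)=0$ together with $\nabla B_{\mathrm{u}}(0)=\nabla B_{\mathrm{x}}(0)=\nabla B_{\mathrm{f}}(0)=0$, so the barrier terms act as nonnegative convex modifications and the quadratic parts provide a lower bound $\tilde{J}_N^*(x)\geq\|x\|_Q^2$. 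For the descent step I would evaluate the cost at the shifted candidate and use the standard telescoping identity
\begin{equation}
\tilde{J}_N(\tilde{\boldsymbol{u}},x^+) = \tilde{J}_N^*(x) - \tilde{\ell}(x,u_0^*) + \Delta_{\mathrm{term}},
\end{equation}
where
\begin{equation}
\Delta_{\mathrm{term}} = \tilde{F}(A_K x_N^*) - \tilde{F}(x_N^*) + \tilde{\ell}(x_N^*,Kx_N^*).
\end{equation}
The central calculation is to bound $\Delta_{\mathrm{term}}\leq 0$. Splitting off the quadratic parts and applying the Lyapunov equation A3 yields $F(A_Kx)-F(x)+\ell(x,Kx)=-\varepsilon\,x^{\TRANSP}\! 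M x$. For the barrier parts, A5 gives $B_{\mathrm{f}}(A_Kx)-B_{\mathrm{f}}(x)\leq 0$, while A2, combined with $\mathcal{X}_f\subset\mathcal{N}$ from A4, yields $\varepsilon B_K(x_N^*)\leq \varepsilon\,x_N^{*\TRANSP}\!M x_N^*$. Adding everything, the $\pm\varepsilon\,x^{\TRANSP}\! Mx$ terms cancel and $\Delta_{\mathrm{term}}\leq 0$, so $\tilde{J}_N^*(x^+)\leq\tilde{J}_N(\tilde{\boldsymbol{u}},x^+)\leq\tilde{J}_N^*(x)-\tilde{\ell}(x,u_0^*)$.

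Since $\tilde{\ell}(x,u)\geq \|x\|_Q^2+\|u\|_R^2+\varepsilon B_{\mathrm{x}}(x)$ is positive definite in $x$ (using $R\succ 0$ and the positive definiteness of the recentered $B_{\mathrm{x}}$, together with the already-established continuity), the difference $\tilde{J}_N^*(x^+)-\tilde{J}_N^*(x)$ is strictly negative whenever $x\neq 0$, and a standard discrete-time Lyapunov argument on the positively invariant set $\mathcal{X}_N^\circ$ delivers asymptotic stability of the origin. The main obstacle I anticipate is the verification of $\Delta_{\mathrm{term}}\leq 0$ with the exact matching between the Lyapunov-equation term $\varepsilon x^{\TRANSP}\! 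M x$ coming from $P$ and the quadratic upper bound on $B_K$ from A2; the assumptions have clearly been set up to make this cancellation work, but one has to be careful to verify $x_N^*\in\mathcal{N}$ so that A2 is applicable, which is precisely what A4 guarantees.
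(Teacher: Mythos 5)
Your proposal is correct and follows essentially the same route as the paper's proof: the shifted tail-sequence argument for recursive feasibility, the telescoping descent inequality with the terminal-cost decrease obtained by cancelling the $\varepsilon\,x^{\TRANSP}\!Mx$ term from the Lyapunov equation (A3) against the quadratic bound on $B_K$ from A2 on $\mathcal{X}_f\subset\mathcal{N}$ and invoking A5, and the use of $\tilde{J}_N^*$ as a Lyapunov function. The only cosmetic point is that positive definiteness of $\tilde{J}_N^*$ rests on the recentered barrier terms (since $Q$ is only positive semi-definite), which you do acknowledge in the descent step.
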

\begin{proof}
 The proof uses standard MPC stability arguments and comprises some of the main ideas presented in~\cite{wills04,feller13} and \cite{feller14b}. 
We first show recursive feasibility of problem~(\ref{eq:OptProblemBarrier}). For any $x_0 \in \mathcal{X}_N^\circ$, there exists by definition an optimal input sequence $\tilde{\boldsymbol{u}}^*(x_0)=\{\tilde{u}_0^*, \dots, \tilde{u}_{N-1}^*\}$ that guarantees strict satisfaction of all input, state, and terminal set constraints and results in a feasible open-loop state sequence $\boldsymbol{x}^*(x_0)=\{x_0,x_1^*,\dots,x_N^*\}$ with $x_N^*\in \mathcal{X}_f^\circ$. The successor state is given as $x_0^+=Ax_0+B\tilde{u}_0^*=x_1^*$. Due to the linear dynamics and the properties of the terminal set $\mathcal{X}_f$, see A4 of Assumption~\ref{ass:barrierMPCStab}, $\tilde{\boldsymbol{u}}^+(x_0)=\{\tilde{u}_1^*, \dots, \tilde{u}_{N-1}^*, Kx_N^*\}$ is a suboptimal but feasible input sequence for the initial state $x_0^+$ that results in the in the feasible open-loop state sequence $\boldsymbol{x}(\tilde{\boldsymbol{u}}^+(x_0),x_0^+)=\{x_0^+,x_2^*,\dots,x_N^*,A_Kx_N^*\}$ 
with $A_Kx_N^* \in \mathcal{X}_f^\circ$. This shows that for any $x_0 \in \mathcal{X}_N^\circ$, there exists a feasible input sequence that ensures that the successor state $x_0^+=Ax_0+B\tilde{u}_0^*$ lies again in the interior of the feasible set~$\mathcal{X}_N$, which guarantees recursive feasibility of the open-loop optimal control problem~(\ref{eq:OptProblemBarrier}).\\ In the following, we show that the value function satisfies
\begin{equation}    \label{eq:jdrecease}
\tilde{J}_N^*(x_0^+)-\tilde{J}_N^*(x_0)\leq  -\tilde{\ell}(x_0,\tilde{u}_0^*(x_0)) \ \ \forall x_0 \in \mathcal{X}_N^\circ \ .
\end{equation} 
First, due to the suboptimality of~$\tilde{\boldsymbol{u}}^+(x_0)$, it holds that $\tilde{J}_N^*(x_0^+)-\tilde{J}_N^*(x_0)\leq \tilde{J}_N(\tilde{\boldsymbol{u}}^+(x_0),x_0^+)-\tilde{J}_N^*(x_0)$, where $\tilde{J}_N(\tilde{\boldsymbol{u}}^+(x_0),x_0^+)$ denotes the value of the cost function evaluated for the suboptimal input sequence~$\tilde{\boldsymbol{u}}^+(x_0)$. Moreover, $\tilde{J}_N(\tilde{\boldsymbol{u}}^+(x_0),x_0^+)-\tilde{J}_N^*(x_0) = \tilde{F}(A_Kx_N^*) -\tilde{F}(x_N^*) + \tilde{\ell}(x_N^*,Kx_N^*) - \tilde{\ell}(x_0,\tilde{u}_0^*) \leq -\tilde{\ell}(x_0,\tilde{u}_0^*)$ for any $x_0 \in \mathcal{X}_N^\circ$ since
\begin{subequations}
\begin{align}
&\tilde{F}(A_Kx_N^{*}) -\tilde{F}(x_N^*) + \tilde{\ell}(x_N^*,Kx_N^*)  \\
\ &= \lVert A_K x_N^* \rVert^2_{P} - \lVert x_N^*\rVert^2_P  + \lVert x_N^* \rVert^2_Q + \lVert Kx_N^* \rVert^2_{R}  \\
\ & \quad + \varepsilon B_K(x_N^*) + \varepsilon B_{\mathrm{f}}(A_Kx_N^*)- \varepsilon B_{\mathrm{f}}(x_N^*) \\
\ & \leq \varepsilon \left(B_{\mathrm{f}}(A_Kx_N^*)- B_{\mathrm{f}}(x_N^*)\right) \leq 0 \ \ \forall \ x_N^* \in \mathcal{X}_f^\circ \ .
\end{align}
\end{subequations}
Here, the first inequality follows from the quadratic bound $B_K(x)\leq \gamma\, x^\TRANSP\! Mx \ \forall x \in \mathcal{X}_f \subset  \mathcal{N}$ and the suitable choice of the terminal cost matrix~$P$, see A2 and A3 of Assumption~\ref{ass:barrierMPCStab}. The second inequality holds due to A5 of Assumption~\ref{ass:barrierMPCStab}. \\
Finally, the considered problem setup and the design of the barrier functions according to A1 of Assumption~\ref{ass:barrierMPCStab} ensure that $\tilde{J}_N^*: \mathcal{X}_N^\circ \to \mathbb{R}_+$ is a well-defined and positive definite function with $\tilde{J}_N^*(x) \to \infty$ whenever $x\to \partial \mathcal{X}_N$. Thus, in combination with the decrease property~(\ref{eq:jdrecease}), it can be used as a Lyapunov function which proves asymptotic stability of the origin of system~(\ref{eq:discreteSystem}) under the feedback $u(k)=\tilde{u}_0^*(x(k))$ for any $x(0) \in \mathcal{X}_N^\circ$.
\end{proof}
 \begin{rem}
  Note that the barrier function parameter $\varepsilon\in \mathbb{R}_{\pplus}$ can in principle be decreased iteratively in each sampling step or between two consecutive sampling steps without loosing feasibility or stability properties of the closed-loop system. This might be meaningful for numerical reasons or in order to enforce convergence to the optimal solution of the conventional MPC problem~(\ref{eq:OptProblemLin}). We limit ourselves to fixed values of $\varepsilon$. 
  \end{rem}
Different approaches on how to actually construct the neighborhood~$\mathcal{N}$ and the corresponding quadratic bound for $B_K(\cdot)$ as well as suitable choices for the terminal set $\mathcal{X}_f$ and the barrier function $B_{\mathrm{f}}(\cdot)$ have been proposed in~\cite{wills04,feller13,feller14b,feller14c}. 
However, the fact that the underlying barrier functions are only defined in the interior of the respective constraint sets may be problematic both from a practical and from a conceptual point of view. On the one hand, violations of the state, input, and terminal set constraints are not tolerated at all, which might cause severe problems in the presence of uncertainties, disturbances, noise, observer errors, or sensor outliers. On the other hand, all existing stability concepts inherently require the use of a suitable terminal set as they are based on upper bounding the barrier function $B_K(\cdot)$ by a quadratic function, which is of course only possible locally in a region around the origin. 
In the following section, we introduce the concept of relaxed logarithmic barrier function based MPC and show that it can be used to overcome all these limitations, allowing for conceptually simpler and more reliable linear MPC schemes. 
%
\section{Main Results}
The basic idea of relaxed logarithmic barrier functions is to smoothly extend a given logarithmic barrier function with a suitable, globally defined penalty function~\cite{benTal92,nash94,hauser06}.
In the following, we provide an in-depth study of several interesting theoretical and practical aspects of linear MPC approaches that are based on such relaxed logarithmic barrier functions.
In particular, we show that, on the one hand, feasibility and stability properties of the nonrelaxed formulation can always be recovered by approximating the original barrier functions close enough.  
On the other hand, we present novel linear MPC schemes that, by exploiting the properties of relaxed logarithmic barrier functions, do not require the use of a terminal set and allow to guarantee global asymptotic stability of the closed-loop system, see Section~\ref{sec:relBarrierStab2}.
The benefits of global stabilization and a simplified design procedure are bought with the loss of guaranteed input and state constraint satisfaction. However, as we will show in Section~\ref{sec:relBarrierConstr}, the relaxed barrier functions can always be designed in such a way that an a-priori defined tolerance for the maximal violation of state and input constraints is guaranteed for a certain set of initial conditions. We briefly discuss some interesting performance and robustness properties of the proposed MPC schemes in Section~\ref{sec:relBarrierOpt}.\vspace*{-0.1cm}
\subsection{Relaxed Logarithmic Barrier Function Based MPC} \label{sec:relBarrierIntro}
We begin our studies by introducing the concept of relaxed logarithmic barrier functions and discussing suitable realizations based on different relaxing functions. \vspace*{-0.1cm}
\begin{defn}[Relaxed logarithmic barrier function] \label{def:relBarrier} \ Consider a scalar $\delta  \in \mathbb{R}_{+\!\!+}$, called the relaxation parameter, and let $\beta(\cdot\,;\delta):(-\infty,\delta]\to \mathbb{R}$ be a strictly monotone and continuously differentiable function that satisfies $\beta(\delta;\delta)=-\ln(\delta)$ as well as $\lim_{z\to -\infty}\beta(z;\delta) = \infty$.
Then, we call $\hat{B}:\mathbb{R}\to \mathbb{R}$ defined as
\begin{equation}
 \hat{B}(z)=\begin{cases}
             -\ln(z) & z>\delta \\
             \phantom{-}\beta(z;\delta) & z \leq \delta
            \end{cases}
\end{equation} 
the relaxed logarithmic barrier function for the set $\mathbb{R}_+$ and refer to the function $\beta(\cdot;\delta)$ as the relaxing function.
\end{defn}
A~graphical illustration of the basic idea is given in Fig.~\ref{fig:recMethods}.
In general, it is advisable to choose the relaxing function $\beta(\cdot\,;\delta)$ as a strictly convex $C^2$ function that smoothly extends the natural logarithm at $z=\delta$. In this case, $\hat{B}(\cdot)$ is a strictly convex function that is twice continuously differentiable and defined on $z\in (-\infty,\infty)$. Note that $\lim_{\delta\to 0}\hat{B}(z) \to B(z)$ for any strictly feasible $z\in\mathbb{R}_{+\!\!+}$, which shows that the nonrelaxed formulation can always be recovered by decreasing the relaxation parameter $\delta$ to zero. Note that we do not indicate the explicit dependence of the relaxed barrier functions on the relaxation parameter~$\delta$ for the sake of notational simplicity. However, we will use $\hat{B}(\cdot)$ to denote the relaxed version of a barrier function based expression~${B}(\cdot)$. \\[0.15cm]
The first ideas on relaxed (or approximate) logarithmic barrier functions with a quadratic relaxing function $\beta(\cdot\,;
\delta)$ seem to have been proposed in~\cite{benTal92} and~\cite{nash94}, respectively.
In \cite{hauser06}, the authors extended the concept to general polynomial penalty terms and applied it in the context of continuous-time trajectory optimization. 
In particular, the authors make use of the polynomial relaxing function 
\begin{equation} \label{eq:relFunHauser}
 \beta_k(z;\delta)=\frac{k-1}{k}\left[\left(\frac{z-k\delta}{(k-1)\delta}\right)^k-1\right]-\ln(\delta) \, ,
\end{equation} 
where $k>1$ is an even integer. It is easy to verify that the function $ \beta_k(\cdot\,;\delta)$ has all the desired properties mentioned above. As reported in~\cite{hauser06}, already a quadratic extension based on $k=2$ seems to work well in practice. \\
In order to avoid large constraint violations, it may be beneficial if the relaxing function increases very rapidly outside the border of the feasible set. As an alternative to the polynomial relaxation above, we proposed in~\cite{feller14} the following exponential relaxing function 
\begin{equation}
 \beta_e(z;\delta)=\exp\left(1-\dfrac{z}{\delta}\right)-1-\ln(\delta) \, ,
\end{equation}
which is an upper bound for the function $ \beta_k(\cdot\,;\delta)$. In fact, using the derivatives $\beta_k'(\cdot;\delta)$, $\beta_e'(\cdot;\delta)$ and the limit representation of the exponential function, it can be shown that $\beta_k(z;\delta)\leq \beta_{k+2}(z;\delta)\leq \beta_e(z;\delta)$ and that $\lim_{k\to\infty}\beta_k(z;\delta)=\beta_e(z;\delta)$ $\forall z\leq \delta$. Furthermore, for $\beta(\cdot\,;\delta)=\beta_k(\cdot\,;\delta)$ and $\beta(\cdot\,;\delta)=\beta_e(\cdot\,;\delta)$ it also holds that $\hat{B}(z)\leq B(z) \ \forall z \in \mathbb{R}_{+\!\!+}$.
We assume in the following that one of the two functions $\beta_k(\cdot\,;\delta)$ and $\beta_e(\cdot\,;\delta)$ is used as relaxing function.\\[0.2cm]
%
Based on these concepts, let us now consider the following relaxed barrier function based MPC formulation  
\begin{subequations}\label{eq:OptProblemRelBarrier}
\begin{align}
\hat{J}_N^{\ast}(x;\delta)&=\min_{\boldsymbol{u}}\sum_{k=0}^{N-1} \hat{\ell}(x_k,u_k) + \hat{F}(x_N)   \\
\mbox{s.\,t.} \ \ & {x}_{k+1}=A{x}_{k}+B{u}_{k}, \ {x}_0=x\, , 
\end{align}
\end{subequations}
where $\hat{\ell}(x,u):= \ell(x,u)+ \varepsilon \hat{B}_{\mathrm{u}}(u)+\varepsilon \hat{B}_{\mathrm{x}}(x)$  for suitable  relaxed recentered logarithmic barrier functions $\hat{B}_{\mathrm{u}}(\cdot)$ and $\hat{B}_{\mathrm{x}}(\cdot)$ as discussed below. The term $\hat{F}(\cdot)$ denotes a suitable relaxed terminal cost function which we do not specify at the moment. As we will see in the Sections~\ref{sec:relBarrierStab1}~and~~\ref{sec:relBarrierStab2}, the choice of $\hat{F}(\cdot)$ is crucial for ensuring stability properties of the closed-loop system. 
\begin{ass}\label{ass:relBarriersXU}
The functions $\hat{B}_{\mathrm{u}}(u)=\sum_{i=1}^{q_{\mathrm{u}}}\hat{B}_{\mathrm{u},i}(u)$ and $\hat{B}_{\mathrm{x}}(x)=\sum_{i=1}^{q_{\mathrm{x}}}\hat{B}_{\mathrm{x},i}(x)$ are relaxed recentered logarithmic barrier functions for the polytopic sets $\mathcal{U}$ and $\mathcal{X}$ and the relaxation parameter $\delta \in \mathbb{R}_{+\!\!+}$ is chosen such that
$\hat{B}_{\mathrm{u}}(0)=\hat{B}_{\mathrm{x}}(0)=0$ and $\nabla\hat{B}_{\mathrm{u}}(0)=0$, $\nabla\hat{B}_{\mathrm{x}}(0)=0$.
\end{ass}
Note that a $\delta$ satisfying Assumption~\ref{ass:relBarriersXU} always exists. In particular, if the relaxation parameter satisfies $0<\delta\leq\min\{d_{\mathrm{x}}^1,\dots,d_{\mathrm{x}}^{q_{\mathrm{x}}},d_{\mathrm{u}}^1$, $\dots,d_{\mathrm{u}}^{q_{\mathrm{u}}}\}$, suitable relaxed barrier functions can be easily constructed by simply relaxing the logarithmic terms of the recentered nonrelaxed formulation. Hence, for $z_i(x):=-{C_{\mathrm{x}}^i}x+d_{\mathrm{x}}^i$, $i=1,\dots,q_{\mathrm{x}}$, we get for example 
 \begin{equation}\label{eq:bxGrad}
 \hat{B}_{\mathrm{x},i}(x)\!=\!\begin{cases}
               -\ln(z_i(x))+\ln(d_{\mathrm{x}}^i)-\frac{{C_{\mathrm{x}}^i} x}{d_{\mathrm{x}}^i} & z_i(x) > \delta \\[0.15cm]
              \, \beta(z_i(x);\delta) +\ln(d_{\mathrm{x}}^i)-\frac{{C_{\mathrm{x}}^i} x}{d_{\mathrm{x}}^i} & z_i(x) \leq \delta \,
              \end{cases}
\end{equation} 
when using gradient recentered barrier functions and 
 \begin{equation}\label{eq:bxWeight}
 \hat{B}_{\mathrm{x},i}(x)\!=\!\begin{cases}
               (1+w_{\mathrm{x}}^i)\left(-\ln(z_i(x))+\ln(d_{\mathrm{x}}^i)\right) & z_i(x) > \delta \\[0.1cm]
               (1+w_{\mathrm{x}}^i)\left(\,\beta(z_i(x);\delta) +\ln(d_{\mathrm{x}}^i)\right) & z_i(x) \leq \delta \,
              \end{cases}
\end{equation} 
when using weight recentered barrier functions, cf.~Definition~\ref{def:recBarrier}. 
The barrier functions $\hat{B}_{\mathrm{u},i}(\cdot)$ for the input constraints can be defined analogously.  
 Note that it is in principle possible to consider individual relaxation parameters $\delta_i$ for the different constraints, but we restrict ourselves to one overall $\delta$ for the sake of simplicity.
While the relaxation of the barrier functions directly implies that problem~(\ref{eq:OptProblemRelBarrier}) is always recursively feasible, stability properties of the resulting closed-loop system are by no means guaranteed.
This problem will be addressed in the following two sections. \vspace*{-0.1cm}
 \begin{figure}
\centering
\includegraphics[scale=0.9]{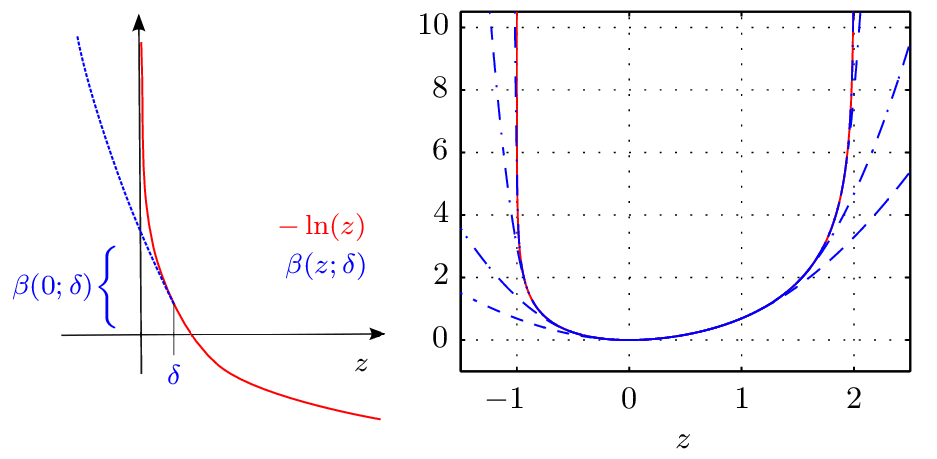}
    \captionof{figure}{Left: Principle of relaxed logarithmic barrier functions based on the relaxing function $\beta(z;\delta)$. Right: Weight recentered logarithmic barrier function (solid) and relaxed weight recentered logarithmic barrier function for $\delta \in \{0.01, 0.1, 0.5, 1\}$ for the constraint $-1\leq z \leq 2$ with $z\in\mathbb{R}$.}
  \label{fig:recMethods}
  \vspace*{-0.3cm}
 \end{figure}
\subsection{Closed-Loop Stability: Terminal Set Based Approaches} \label{sec:relBarrierStab1}
In this section we present our main results on the closed-loop stability properties of relaxed logarithmic barrier function based MPC schemes that make use of a suitable terminal set,~cf.~\cite{feller14}. 
In accordance with \cite{wills04,feller13,feller14b,feller14c}, we assume in the following that the terminal set $\mathcal{X}_f$ can be represented as
\begin{equation}\label{eq:XfPhi}
\mathcal{X}_f=\{x \in \mathbb{R}^n: \varphi(x) \leq 1\}\, ,
\end{equation}
where $\varphi: \mathbb{R}^n \to \mathbb{R}_+$ is a continuously differentiable, convex, and positive definite function that satisfies $\varphi(A_Kx) \leq \varphi(x) \ \forall x\in \mathcal{X}_f$.  
When considering ellipsoidal terminal sets as discussed in \cite{wills04} and\cite{feller13}, $\varphi(\cdot)$ may for example be chosen as $\varphi(x)=x^\TRANSP P_fx$ with a suitable $P_f \in \mathbb{S}^n_{+\!\!+}$. In \cite{feller14b} and \cite{feller14c}, it is moreover shown how smooth approximations of the Minkowski functional can be used to define $\varphi(\cdot)$ for polytopic terminal sets of the form $\mathcal{X}_f=\{x\in\mathbb{R}^n:H_fx\leq \mathds{1}\}$. \\[0.25cm]
\textit{\indent B1) Stabilization with strict constraint satisfaction}\\[0.15cm]
In the following, we will show that asymptotic stability of the closed-loop system as well as strict satisfaction of all input and state constraints can always be guaranteed for any feasible initial condition by making the relaxation parameter $\delta \in \mathbb{R}_{\pplus}$ arbitrarily small. 
The key assumption underlying the following results is that both the state and input constraint barrier functions and the terminal set constraint barrier function are relaxed.
In particular, the barrier functions for the input and state constraints are chosen according to Assumption~\ref{ass:relBarriersXU}, while the terminal cost function is given by 
\begin{equation} \label{eq:termCost_relXf}
\hat{F}(x)=x^\TRANSP\! Px+\varepsilon \hat{B}_{\mathrm{f}}(x)\, , 
\end{equation}
where $\hat{B}_{\mathrm{f}}(\cdot)$ is a relaxed logarithmic barrier function for the terminal set $\mathcal{X}_f$. 
Assuming $\delta \leq 1$, the function $\hat{B}_{\mathrm{f}}(\cdot)$ can be defined as follows.
\begin{ass} \label{ass:barrierXf}
 The barrier function $\hat{B}_{\mathrm{f}}(\cdot)$ for the terminal set $\mathcal{X}_f$ is a relaxed logarithmic barrier function of the form 
  \begin{equation}
 \hat{B}_{\mathrm{f}}(x)=\begin{cases}
               -\ln(1-\varphi(x)) & 1-\varphi(x) > \delta \\
               \, \beta(1-\varphi(x);\delta) & 1-\varphi(x) \leq \delta \, ,
              \end{cases} 
\end{equation} 
where $\delta \in \mathbb{R}_{+\!\!+}$, $\delta \leq 1$, is the relaxation parameter and $\varphi(\cdot)$ is the function defining the terminal set according to~(\ref{eq:XfPhi}).
\end{ass}
Note that the function $\hat{B}_{\mathrm{f}}(\cdot)$ is continuously differentiable, convex, and positive definite by design. 
Consider now the following definition, which introduces a lower bound for the value of the relaxed barrier functions $\hat{B}_{\mathrm{x}}(\cdot)$, $\hat{B}_{\mathrm{u}}(\cdot)$, and $\hat{B}_{\mathrm{f}}(\cdot)$ evaluated at the borders of the respective constraint sets.
\begin{defn}\label{def:betaBar}
Let the scalar $\bar{\beta}(\delta) \in \mathbb{R}_{\pplus}$ be defined as 
\begin{equation}
\bar{\beta}(\delta)=\min\left\lbrace \bar{\beta}_{\mathrm{x}}(\delta), \bar{\beta}_{\mathrm{u}}(\delta), \bar{\beta}_{\mathrm{f}}(\delta)\right\rbrace\, ,
\end{equation}
where $\bar{\beta}_{\mathrm{f}}(\delta)=\beta(0;\delta)$ and
\begin{subequations} \label{eq:betaXUEq}
\begin{align}
\bar{\beta}_{\mathrm{x}}(\delta)&=\min_{i,x}\{\hat{B}_{\mathrm{x}}(x)\vert \ {C_{\mathrm{x}}^ix=d_{\mathrm{x}}^i}\}, \quad i=1,\dots,q_{\mathrm{x}} \, , \\
\bar{\beta}_{\mathrm{u}}(\delta)&=\min_{u,j}\{\hat{B}_{\mathrm{u}}(u) \vert \ {C_{\mathrm{u}}^ju=d_{\mathrm{u}}^j}\}, \quad j=1,\dots,q_{\mathrm{u}} \, .
\end{align}
\end{subequations}
\vspace*{-0.45cm}
\end{defn} 
Note that the values $\bar{\beta}_{\mathrm{x}}(\delta)$, $\bar{\beta}_{\mathrm{u}}(\delta)$, and hence also $\bar{\beta}(\delta)$, can be computed easily for a given $\delta\in\mathbb{R}_{\pplus}$ as the optimization problems in~(\ref{eq:betaXUEq}) are convex.
Moreover, for the case of gradient recentered relaxed logarithmic barrier functions, an explicit expression for a lower bound on $\bar{\beta}(\delta)$ has been given in~\cite{feller14}.
The following Lemma shows that the sublevel sets of $\hat{B}_{\mathrm{x}}(\cdot)$, $\hat{B}_{\mathrm{u}}(\cdot)$, and $\hat{B}_{\mathrm{f}}(\cdot)$ related to $\bar{\beta}_{\mathrm{x}}(\delta)$, $\bar{\beta}_{\mathrm{u}}(\delta)$, and $\bar{\beta}_{\mathrm{f}}(\delta)$ are always contained within the sets $\mathcal{X}$, $\mathcal{U}$, and $\mathcal{X}_f$, respectively. 
\begin{lem} \label{lem:relBarrierLevelSets}
Let the Assumptions~\ref{ass:relBarriersXU} and \ref{ass:barrierXf} hold and let the values $\bar{\beta}_{\mathrm{x}}(\delta)$, $\bar{\beta}_{\mathrm{u}}(\delta)$, and $\bar{\beta}_{\mathrm{f}}(\delta)$ be defined according to Definition~\ref{def:betaBar}. Then it holds that $\mathcal{S}_{\hat{B}_{\mathrm{x}}}=\{x \in \mathbb{R}^n| \hat{B}_{\mathrm{x}}(x)\leq \bar{\beta}_{\mathrm{x}}(\delta)\}\subseteq \mathcal{X}$, $\mathcal{S}_{\hat{B}_{\mathrm{u}}}=\{u \in \mathbb{R}^m| \hat{B}_{\mathrm{u}}(u)\leq \bar{\beta}_{\mathrm{u}}(\delta)\}\subseteq \mathcal{U}$, as well as $\mathcal{S}_{\hat{B}_{\mathrm{f}}}=\{x \in \mathbb{R}^n| \hat{B}_{\mathrm{f}}(x)\leq \bar{\beta}_{\mathrm{f}}(\delta)\}\subseteq \mathcal{X}_f$.
\end{lem}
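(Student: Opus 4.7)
I would prove each of the three containments by contrapositive: show that any point outside the respective constraint set has a relaxed barrier value strictly larger than the corresponding threshold. The key ingredients I would invoke are that $\hat{B}_{\mathrm{x}}$ and $\hat{B}_{\mathrm{u}}$ are (strictly) convex and vanish at the origin together with their gradient (Assumption~\ref{ass:relBarriersXU}), that each constraint set is convex and contains the origin in its interior, and that $\hat{B}_{\mathrm{f}}$ depends on $x$ only through $\varphi(x)$ (Assumption~\ref{ass:barrierXf}).

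For the state-constraint case, I would start from an arbitrary $x \notin \mathcal{X}$. Since $0 \in \mathcal{X}^\circ$ and $\mathcal{X}$ is convex, the segment from $0$ to $x$ crosses $\partial\mathcal{X}$ at a unique point $x^{\ast} = \lambda^{\ast} x$ with $\lambda^{\ast} \in (0,1)$, and at that point at least one halfspace constraint, say the $i_0$-th, is active: $C_{\mathrm{x}}^{i_0}x^{\ast} = d_{\mathrm{x}}^{i_0}$. By the definition of $\bar{\beta}_{\mathrm{x}}(\delta)$ as a minimum over the union of all constraint hyperplanes, this yields $\hat{B}_{\mathrm{x}}(x^{\ast}) \geq \bar{\beta}_{\mathrm{x}}(\delta)$. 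Applying convexity along the segment together with $\hat{B}_{\mathrm{x}}(0) = 0$ gives $\hat{B}_{\mathrm{x}}(x^{\ast}) = \hat{B}_{\mathrm{x}}(\lambda^{\ast} x + (1-\lambda^{\ast})\cdot 0) \leq \lambda^{\ast}\hat{B}_{\mathrm{x}}(x)$, and combining the two bounds produces $\hat{B}_{\mathrm{x}}(x) \geq \hat{B}_{\mathrm{x}}(x^{\ast})/\lambda^{\ast} > \bar{\beta}_{\mathrm{x}}(\delta)$. This proves $\mathcal{S}_{\hat{B}_{\mathrm{x}}} \subseteq \mathcal{X}$, and the identical argument applied to $\hat{B}_{\mathrm{u}}$, $\mathcal{U}$ yields $\mathcal{S}_{\hat{B}_{\mathrm{u}}} \subseteq \mathcal{U}$.

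For the terminal set I would proceed more directly, exploiting that $\hat{B}_{\mathrm{f}}$ is a function of the scalar $\varphi(x)$ alone. On $\partial\mathcal{X}_f = \{\varphi(x) = 1\}$ we have $1 - \varphi(x) = 0 \leq \delta$, so $\hat{B}_{\mathrm{f}}(x)$ equals the constant $\beta(0;\delta) = \bar{\beta}_{\mathrm{f}}(\delta)$ identically. For any $x \notin \mathcal{X}_f$ we have $\varphi(x) > 1$, hence $1 - \varphi(x) < 0 \leq \delta$, and strict monotonicity of $\beta(\,\cdot\,;\delta)$ (which is strictly decreasing, since it blows up at $-\infty$ and matches $-\ln(\cdot)$ at $z = \delta$) gives $\hat{B}_{\mathrm{f}}(x) = \beta(1-\varphi(x);\delta) > \beta(0;\delta) = \bar{\beta}_{\mathrm{f}}(\delta)$, which yields $\mathcal{S}_{\hat{B}_{\mathrm{f}}} \subseteq \mathcal{X}_f$.

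The only subtle point I expect in a fully written-out argument is justifying $\bar{\beta}_{\mathrm{x}}(\delta) > 0$ (and analogously for $\mathrm{u}$ and $\mathrm{f}$), which is needed so that the division by $\lambda^{\ast}$ produces a strict inequality. This follows because the recentered relaxed barriers are strictly convex with unique minimum value $0$ attained at the origin, while the constraint hyperplanes $C_{\mathrm{x}}^i x = d_{\mathrm{x}}^i$ do not pass through $0$ (since $d_{\mathrm{x}}^i > 0$); hence every feasible candidate in the definition of $\bar{\beta}_{\mathrm{x}}(\delta)$ is bounded away from the origin, forcing the minimum to be strictly positive. Beyond this technicality, the proof reduces entirely to the convexity/scaling argument for $\mathcal{X}$, $\mathcal{U}$ and to the monotonicity argument for $\mathcal{X}_f$.
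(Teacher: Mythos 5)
Your proposal is correct and follows essentially the same route as the paper: for $\mathcal{X}$ and $\mathcal{U}$ the paper scales a hypothetical infeasible point in $\mathcal{S}_{\hat{B}_{\mathrm{x}}}$ back onto an active constraint hyperplane and uses convexity together with $\hat{B}_{\mathrm{x}}(0)=0$ to contradict the minimality in the definition of $\bar{\beta}_{\mathrm{x}}(\delta)$, which is exactly your scaling argument phrased as a contradiction rather than a contrapositive, and for $\mathcal{X}_f$ it likewise invokes the strict monotonicity of $\hat{B}_{\mathrm{f}}$ in $z=1-\varphi(x)$. Your explicit justification of $\bar{\beta}_{\mathrm{x}}(\delta)>0$ is a welcome detail the paper only asserts via Definition~\ref{def:betaBar}.
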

\begin{proof}
The result for the terminal set $\mathcal{X}_f$ follows directly from the fact that $\hat{B}_{\mathrm{f}}(\cdot)$ is convex and strictly monotone in the argument $\, z=1-\varphi(x)$. Considering the state constraints, let us assume that there exists an $\bar{x}\in \mathcal{S}_{B_x}$ with $\bar{x} \notin \mathcal{X}$, i.e., $C_{\mathrm{x}} \bar{x}>d_{\mathrm{x}}$. However, then there would exists a $\lambda \in (0,1)$ such that $C_{\mathrm{x}} \lambda \bar{x} =\lambda C_{\mathrm{x}} \bar{x}\leq d_{\mathrm{x}}$ and $C_{\mathrm{x}}^i \lambda \bar{x} =\lambda C_{\mathrm{x}}^i \bar{x}=d_{\mathrm{x}}^i$ for some $i=1,\dots,q_{\mathrm{x}}$. Now, due to the convexity and positive definiteness of $\hat{B}_{\mathrm{x}}(\cdot)$, it holds that $\hat{B}_{\mathrm{x}}(\lambda\bar{x})\leq \lambda \hat{B}_{\mathrm{x}}(\bar{x})<\bar{\beta}_{\mathrm{x}}(\delta)$, which obviously is a contradiction to the definition of $\bar{\beta}_{\mathrm{x}}(\delta)$. For the input constraints we can use similar arguments.
\end{proof} 
Based on the above results, we now define a set of initial conditions for which we can guarantee asymptotic stability of the origin as well as strict satisfaction of all input and state constraints. 
\begin{defn} \label{def:Xdelta}
For a given $\delta \in \mathbb{R}_{+\!\!+}$ and a corresponding $\bar{\beta}(\delta)\in \mathbb{R}_{\pplus}$ according to Definition~\ref{def:betaBar}, let the set $\hat{\mathcal{X}}_{N}(\delta)$ be defined as $\hat{\mathcal{X}}_{N}(\delta):=\{x \in \mathbb{R}^n\, |\, \hat{J}^*_N(x;\delta)\leq \varepsilon \bar{\beta}(\delta)\}$. 
\end{defn}
\begin{thm} \label{thm:exactRel}
Let Assumptions~\ref{ass:barrierMPCStab},~\ref{ass:relBarriersXU}, and \ref{ass:barrierXf} hold true. Consider problem~(\ref{eq:OptProblemRelBarrier}) with the terminal cost $\hat{F}(\cdot)$ from~(\ref{eq:termCost_relXf}) and let the set $\hat{\mathcal{X}}_N(\delta)$ be defined according to Definition~\ref{def:Xdelta}. Then, the feedback $u(k)=\hat{u}_0^*(x(k))$ asymptotically stabilizes the origin of system~(\ref{eq:discreteSystem}) under strict satisfaction of all input and state constraints for any $x(0) \in \hat{\mathcal{X}}_N(\delta)$.
\end{thm}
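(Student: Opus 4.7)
The plan is to show that the sublevel set $\hat{\mathcal{X}}_N(\delta)$ serves as a forward-invariant region on which the relaxed MPC scheme effectively inherits the Lyapunov structure of Theorem~\ref{thm:barrierMPCStab}. The key insight is that the defining inequality $\hat{J}^*_N(x;\delta)\leq\varepsilon\bar{\beta}(\delta)$, combined with the non-negativity of the quadratic stage and terminal costs and of the recentered relaxed barriers $\hat{B}_{\mathrm{x}}$, $\hat{B}_{\mathrm{u}}$, $\hat{B}_{\mathrm{f}}$ (the latter following from Assumption~\ref{ass:relBarriersXU} together with convexity, noting that $\varphi(0)=0$ and $\delta\leq 1$ force $\hat{B}_{\mathrm{f}}(0)=0$), bounds each individual barrier term $\varepsilon\hat{B}_{\mathrm{x}}(x_k^*)$, $\varepsilon\hat{B}_{\mathrm{u}}(u_k^*)$, $\varepsilon\hat{B}_{\mathrm{f}}(x_N^*)$ along the optimal open-loop prediction by $\varepsilon\bar{\beta}(\delta)$. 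Lemma~\ref{lem:relBarrierLevelSets} then immediately yields $x_k^*\in\mathcal{X}$, $u_k^*\in\mathcal{U}$, $x_N^*\in\mathcal{X}_f$, i.e., strict constraint satisfaction for every $x\in\hat{\mathcal{X}}_N(\delta)$.

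The second step is to reproduce the decrease inequality of Theorem~\ref{thm:barrierMPCStab} using the shifted candidate input $\hat{\boldsymbol{u}}^+(x) = \{\hat{u}_1^*, \ldots, \hat{u}_{N-1}^*, K x_N^*\}$ for the successor state $x^+ = A x + B \hat{u}_0^*(x)$. Because the relaxed problem is unconstrained in $\boldsymbol{u}$, this candidate is automatically admissible, so $\hat{J}^*_N(x^+;\delta) \leq \hat{J}_N(\hat{\boldsymbol{u}}^+(x), x^+)$, and the telescoping identity reduces the matter to reproducing the terminal bookkeeping of Theorem~\ref{thm:barrierMPCStab}. Two structural properties must transfer to the relaxed setting: (a) the quadratic bound $\hat{B}_K(x) \leq x^\TRANSP M x$ on $\mathcal{X}_f \subset \mathcal{N}$, and (b) the terminal descent $\hat{B}_{\mathrm{f}}(A_K x) - \hat{B}_{\mathrm{f}}(x) \leq 0$ on $\mathcal{X}_f$. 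Property (a) follows from the global upper bound $\hat{B}(z)\leq B(z)$ on $\mathbb{R}_{+\!\!+}$ (stated for both $\beta_k$ and $\beta_e$ just after~\eqref{eq:relFunHauser}) combined with A2 of Assumption~\ref{ass:barrierMPCStab}, giving $\hat{B}_K \leq B_K \leq x^\TRANSP M x$. Property (b) follows because $\hat{B}_{\mathrm{f}}$ is by construction a strictly decreasing function of the scalar $z = 1-\varphi(x)$ ($-\ln$ is decreasing and the relaxing function $\beta(\cdot;\delta)$ is strictly monotone with $\lim_{z\to-\infty}\beta(z;\delta)=\infty$), while $\varphi(A_K x)\leq\varphi(x)$ on $\mathcal{X}_f$ by~\eqref{eq:XfPhi}. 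With (a) and (b), the same algebra as in Theorem~\ref{thm:barrierMPCStab}, invoking the Lyapunov equation A3, yields $\hat{F}(A_K x_N^*)-\hat{F}(x_N^*)+\hat{\ell}(x_N^*,K x_N^*)\leq\varepsilon\bigl(\hat{B}_{\mathrm{f}}(A_K x_N^*)-\hat{B}_{\mathrm{f}}(x_N^*)\bigr)\leq 0$, hence $\hat{J}^*_N(x^+;\delta) \leq \hat{J}^*_N(x;\delta) - \hat{\ell}(x,\hat{u}_0^*(x))\leq\varepsilon\bar{\beta}(\delta)$. In particular $x^+\in\hat{\mathcal{X}}_N(\delta)$, so the conclusion of the first step propagates recursively and constraint satisfaction holds along the closed-loop trajectory.

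Asymptotic stability of the origin then follows from a standard Lyapunov closure: $\hat{J}^*_N(\cdot;\delta)$ is continuous, vanishes at the origin (all individual terms do), is positive definite (the first-stage contribution $\|x\|_Q^2$ together with the strong convexity contributed by the relaxed barriers), and decreases strictly along closed-loop trajectories by the margin $\hat{\ell}(x,\hat{u}_0^*(x))$ derived above. I expect the main obstacle to be the clean verification of ingredients (a) and (b): these are the only spots where the proof of Theorem~\ref{thm:barrierMPCStab} genuinely leans on the $-\ln$ structure, and transporting them to the piecewise-defined relaxed barriers — in particular, ensuring that (b) holds uniformly on $\mathcal{X}_f$ irrespective of whether $x$ and $A_K x$ lie in the non-relaxed region $1-\varphi>\delta$ or in the relaxed region $1-\varphi\leq\delta$ — is where the real work of this theorem concentrates, even though each individual piece ultimately reduces to a routine monotonicity or pointwise-inequality argument.
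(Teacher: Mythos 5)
Your proposal is correct and follows essentially the same three-part structure as the paper's own proof: bounding each barrier term by $\varepsilon\bar{\beta}(\delta)$ and invoking Lemma~\ref{lem:relBarrierLevelSets} for constraint satisfaction, establishing the cost decrease via the shifted candidate sequence using exactly the two transported ingredients $\hat{B}_K\leq B_K\leq x^\TRANSP M x$ and the monotonicity of $\hat{B}_{\mathrm{f}}$ in $1-\varphi(x)$, and closing with forward invariance of $\hat{\mathcal{X}}_N(\delta)$ and the Lyapunov argument. No substantive differences.
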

\begin{proof}
The proof consists of three parts and is closely related to that of Theorem~12~in~\cite{feller14}. First, we show that the underlying input, state, and terminal set constraints are not violated for any $x_0\in \hat{\mathcal{X}}_N(\delta)$; then we use standard MPC arguments to show that the value function $\hat{J}^*(x(k);\delta)$ will decrease when applying the feedback $u(k)$; finally, we use this result to conclude that the resulting input and state sequences will also be strictly feasible at all later time steps and that the origin of the closed-loop system is asymptotically stable. \\
i) Let $\hat{\boldsymbol{u}}^*(x_0)=\{\hat{u}_0^*, \dots, \hat{u}_{N-1}^*\}$, $\boldsymbol{x}^*(x_0)=\{x_0,\dots,x_N^*\}$ denote the optimal open-loop input and state sequences for a given $x_0\in \hat{\mathcal{X}}_N(\delta)$. Since the cost function in~(\ref{eq:OptProblemRelBarrier}) is a sum of positive definite terms, it holds that  $\varepsilon \hat{B}_{\mathrm{x}}(x^*_{k}) < \hat{J}_N^*(x_0;\delta)$, $\varepsilon \hat{B}_{\mathrm{u}}(\hat{u}_k^*) < \hat{J}_N^*(x_0;\delta)$, as well as $\varepsilon \hat{B}_{\mathrm{f}}(x^*_N) < \hat{J}_N^*(x_0;\delta)$ for all $x_0\in \hat{\mathcal{X}}_N(\delta)\setminus\{0\}$. Hence, $\hat{B}_{\mathrm{x}}(x^*_k) < \bar{\beta}(\delta)$, $\hat{B}_{\mathrm{u}}(\hat{u}_k^*) < \bar{\beta}(\delta)$, as well as $\hat{B}_{\mathrm{f}}(x^*_N) < \bar{\beta}(\delta)$.
 Due to Lemma~\ref{lem:relBarrierLevelSets} and the definition of~$\bar{\beta}(\delta)$, this implies that $x_k^* \in\mathcal{X}^\circ$ and $\hat{u}_k^*\in\mathcal{U}^\circ$ as well as $x_N^*\in\mathcal{X}_f^\circ$ for any $x_0 \in \hat{\mathcal{X}}_N(\delta)\setminus\{0\}$. Of course, the case $x_0=0$ is trivial.
Hence, the predicted input and state sequences are strictly feasible and the applied input results in a successor state $x_0^+=Ax_0+B\hat{u}_0^* \in\mathcal{X}_N^\circ$. \\
ii) Let us now consider $x_0^+ \in\mathcal{X}_N^\circ$. 
We can use basically the same arguments as in the proof of Theorem~\ref{thm:barrierMPCStab} to show that
\begin{equation}    \label{eq:jRelDrecease}
\hat{J}_N^*(x_0^+;\delta)-\hat{J}_N^*(x_0;\delta)\!\leq\!-\hat{\ell}(x_0,\hat{u}_0^*(x_0)) \ \forall x_0 \in \hat{\mathcal{X}}_N(\delta) .
\end{equation} 
In particular, we know that 
 $\tilde{\boldsymbol{u}}^+(x_0)=\{\hat{u}_1^*, \dots, \hat{u}_{N-1}^*, Kx_N^*\}$ and ${\boldsymbol{x}}^+(x_0)=\{x_0^+,x_2^*,\dots,x_N^*,A_Kx_N^*\}$ with $A_Kx_N^* \in \mathcal{X}_f$ are suboptimal but feasible input and state sequences for the initial state $x_0^+\in \mathcal{X}_N$.
Moreover,
\begin{subequations}
\begin{align}
&\hat{F}(A_Kx_N^{*}) -\hat{F}(x_N^*) + \hat{\ell}(x_N^*,Kx_N^*)  \\
\ &= \lVert A_K x_N^* \rVert^2_{P} - \lVert x_N^*\rVert^2_P  + \lVert x_N^* \rVert^2_Q + \lVert Kx_N^* \rVert^2_{R}  \\
\ & \quad + \varepsilon \hat{B}_K(x_N^*) + \varepsilon \hat{B}_{\mathrm{f}}(A_Kx_N^*)- \varepsilon \hat{B}_{\mathrm{f}}(x_N^*) \\
\ & \leq \varepsilon \big(\hat{B}_{\mathrm{f}}(A_Kx_N^*)- \hat{B}_{\mathrm{f}}(x_N^*)\big) \leq 0 \ \ \forall \ x_N^* \in \mathcal{X}_f \ .
\end{align}
\end{subequations}
Here, the first inequality follows from the choice of $\mathcal{X}_f$ and~$P$ according to Assumption~\ref{ass:barrierMPCStab} and the fact that $\hat{B}_K(x_N^*)\leq B_K(x_N^*)\leq x_N^{*\mathrm{T}}Mx_N^* \ \forall\, x_N^* \in \mathcal{X}_f$. The second inequality follows from the monotonicity of the relaxed logarithmic barrier function $\hat{B}_{\mathrm{f}}(\cdot)$ and the assumption that $\varphi(A_Kx_N^*)\leq\varphi(x_N^*)$ $\forall x_N^*\in \mathcal{X}_f$. Based on arguments as in the proof of Theorem~\ref{thm:barrierMPCStab}, this result can be used to prove the decrease property~(\ref{eq:jRelDrecease}). \\
iii)  The fact that the value function decreases shows that $\hat{J}_N^*(x_0^+;\delta)\leq \hat{J}_N^*(x_0;\delta) \leq \varepsilon \bar{\beta}(\delta)$ and hence $x_0^+ \in \hat{\mathcal{X}}_N(\delta)$ for any $x_0 \in \hat{\mathcal{X}}_N(\delta)$. By repeating this argument, the resulting closed-loop system state satisfies $x(k) \in \hat{\mathcal{X}}_N(\delta) \ \forall k\geq 0$ for any $x(0) \in \hat{\mathcal{X}}_N(\delta)$, which shows that all future states and inputs will be strictly feasible. 
Moreover, due to the design of the relaxed barrier functions, $\hat{J}_N^*(x;\delta)$ is a well-defined, positive definite, and radially unbounded function. 
Hence, in combination with (\ref{eq:jRelDrecease}) it can be used as a Lyapunov function, proving asymptotic stability of the origin with a guaranteed region of attraction of at least $\hat{\mathcal{X}}_N(\delta)$.
\end{proof} 
The following results state some useful properties of the region of attraction $\hat{\mathcal{X}}_N(\delta)$ and show that the feasible set $\mathcal{X}_N$ of the corresponding non-relaxed formulation can be recovered by making the relaxation parameter arbitrarily small. \vspace*{-0.15cm}
\begin{lem}[cf. \cite{feller14}]\label{lem:XbetaProp}
Let the assumptions in Theorem~\ref{thm:exactRel} hold and let the set $\hat{\mathcal{X}}_N(\delta)$ be defined according to Definition~\ref{def:Xdelta}. Then, $\hat{\mathcal{X}}_N(\delta)$ is a nonempty compact and convex set. Furthermore, $\hat{\mathcal{X}}_N(\delta) \subseteq \mathcal{X}_N^\circ$ and $\hat{\mathcal{X}}_N(\delta) \to \mathcal{X}_N$ as $\delta \to 0$. \vspace*{-0.05cm}
\end{lem}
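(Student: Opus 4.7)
The plan is to dispatch the structural properties (nonemptiness, convexity, compactness) first by analyzing the relaxed value function $\hat{J}^*_N(\,\cdot\,;\delta)$, then note that the inclusion $\hat{\mathcal{X}}_N(\delta)\subseteq\mathcal{X}_N^\circ$ is already contained in part~(i) of the proof of Theorem~\ref{thm:exactRel}, and finally handle the limit as $\delta\to 0$, which is the genuinely new ingredient.

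Nonemptiness is immediate: by Assumption~\ref{ass:relBarriersXU} the relaxed stage and terminal costs vanish at the origin together with all barriers, so $\hat{J}^*_N(0;\delta)=0\le\varepsilon\bar{\beta}(\delta)$ and thus $0\in\hat{\mathcal{X}}_N(\delta)$. Convexity is a sublevel-set statement for $\hat{J}^*_N(\,\cdot\,;\delta)$, which is convex because the joint cost $\hat{J}_N(\boldsymbol{u},x;\delta)$ is jointly convex in $(x,\boldsymbol{u})$ (sums of convex quadratics and of convex relaxed logarithms composed with affine maps under the linear dynamics), and partial minimization over $\boldsymbol{u}$ preserves convexity in~$x$. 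For compactness I would obtain closedness from continuity of the parametric minimum $\hat{J}^*_N(\,\cdot\,;\delta)$ (standard, given that the inner problem is smooth and strongly convex), and boundedness from its radial unboundedness. The latter uses $\hat{J}^*_N(x;\delta)\ge\varepsilon\hat{B}_{\mathrm{x}}(x)$ together with the fact that $\mathcal{X}$ is a compact polytope: any sequence with $\|x\|\to\infty$ makes at least one slack $z_i(x)=-C_{\mathrm{x}}^i x+d_{\mathrm{x}}^i$ tend to $-\infty$, and the relaxing function $\beta_k$ or $\beta_e$ grows polynomially or exponentially there, overwhelming the at-most-logarithmic or linear contributions of the remaining recentering terms. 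The inclusion $\hat{\mathcal{X}}_N(\delta)\subseteq\mathcal{X}_N^\circ$ then just restates the first step of the proof of Theorem~\ref{thm:exactRel}: any $x_0\in\hat{\mathcal{X}}_N(\delta)$ admits an optimal input sequence with $x^*_k\in\mathcal{X}^\circ$, $\hat{u}^*_k\in\mathcal{U}^\circ$, and $x^*_N\in\mathcal{X}_f^\circ$, which by Definition~\ref{def:feasSet} means $x_0\in\mathcal{X}_N^\circ$.

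The main obstacle is the limit $\hat{\mathcal{X}}_N(\delta)\to\mathcal{X}_N$ as $\delta\to 0$. The key observations are: (a) $\bar{\beta}(\delta)\to\infty$, since $\bar{\beta}_{\mathrm{f}}(\delta)=\beta(0;\delta)\ge -\ln(\delta)\to\infty$ and analogous lower bounds diverge for $\bar{\beta}_{\mathrm{x}}(\delta)$ and $\bar{\beta}_{\mathrm{u}}(\delta)$; (b) for any fixed $x\in\mathcal{X}_N^\circ$, the non-relaxed optimizer $\tilde{\boldsymbol{u}}^*(x)$ of problem~(\ref{eq:OptProblemBarrier}) produces a strictly feasible trajectory, so for all sufficiently small $\delta$ every logarithmic argument along that trajectory exceeds $\delta$ and the relaxed cost at $\tilde{\boldsymbol{u}}^*(x)$ coincides with $\tilde{J}^*_N(x)<\infty$, giving $\hat{J}^*_N(x;\delta)\le\tilde{J}^*_N(x)$. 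Combining (a) and (b) shows that every $x\in\mathcal{X}_N^\circ$ eventually lies in $\hat{\mathcal{X}}_N(\delta)$; together with the already-established inclusion $\hat{\mathcal{X}}_N(\delta)\subseteq\mathcal{X}_N^\circ\subseteq\mathcal{X}_N$, this yields $\hat{\mathcal{X}}_N(\delta)\to\mathcal{X}_N$ in the natural set-convergence sense, using that $\mathcal{X}_N$ equals the closure of its interior under the polytopic assumptions on $\mathcal{X}$, $\mathcal{U}$, and $\mathcal{X}_f$.
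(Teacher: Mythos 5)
Your proposal is correct and follows essentially the same route as the paper: positive definiteness, convexity, and radial unboundedness of $\hat{J}_N^*(\cdot\,;\delta)$ give the structural properties; part~(i) of the proof of Theorem~\ref{thm:exactRel} gives the inclusion $\hat{\mathcal{X}}_N(\delta)\subseteq\mathcal{X}_N^\circ$; and the limit rests on the same two ingredients, namely that a strictly feasible nonrelaxed trajectory yields $\hat{J}_N^*(x;\delta)\le\tilde{J}_N^*(x)$ for all sufficiently small $\delta$ while $\bar{\beta}(\delta)\to\infty$. The only real difference is that the paper upgrades your pointwise "eventually contained" statement to uniform containment of arbitrary compact subsets of $\mathcal{X}_N^\circ$ by arguing continuity of the threshold $\delta'_0(x_0)$ and minimizing over the compact set --- the form actually invoked in Corollary~\ref{cor:exactRelX0} --- whereas your pointwise version combined with $\mathcal{X}_N=\mathrm{cl}(\mathcal{X}_N^\circ)$ suffices for the lemma as stated.
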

\begin{proof}
 For any $\delta\in\mathbb{R}_{\pplus}$ satisfying Assumptions~\ref{ass:relBarriersXU}~and~\ref{ass:barrierXf}, $J_N^*(\cdot;\delta)$ is a positive definite function that is convex and radially unbounded. As $\bar{\beta}(\delta)$ is strictly positive~(see Definition~\ref{def:betaBar}), and $\varepsilon\in\mathbb{R}_{\pplus}$, the first part follows immediately.\\
 As shown in the proof of Theorem~\ref{thm:exactRel}, any initial condition $x_0\!\in\!\hat{\mathcal{X}}_N(\delta)$ results in strictly feasible input and state sequences, which implies that $\hat{\mathcal{X}}_N(\delta) \subseteq \mathcal{X}_N^\circ$. 
We now show that $\hat{\mathcal{X}}_N(\delta)$ will contain any compact subset of $\mathcal{X}_N^\circ$ if we make the relaxation parameter arbitrarily small. 
Assume $x_0 \in \mathcal{X}_N^\circ$ and let $\boldsymbol{\tilde{u}}^*(x_0)$,~$\boldsymbol{\tilde{x}}^*(x_0)$,~$\tilde{J}_N^*(x_0)$ denote the solution of the corresponding nonrelaxed problem formulation~(\ref{eq:OptProblemBarrier}). 
Then, there always exists $\delta_0(x_0):=\min\{-C_{\mathrm{x}}^i \tilde{x}_k(x_0)+d_{\mathrm{x}}^i$, $-C_{\mathrm{u}}^j \tilde{u}_k(x_0)+d_{\mathrm{u}}^j$, $1-\varphi(\tilde{x}_N^*(x_0)$, \ $i\!=\!1,\dots,q_{\mathrm{x}}$, $j\!=\!1,\dots,q_{\mathrm{u}}$, $k\!=\!0,\dots,N\!-\!1\} \, \in\mathbb{R}_{\pplus}$ with the property that the solutions of relaxed and nonrelaxed formulation will be equivalent for all $\delta\leq\delta_0(x_0)$, i.e., $\boldsymbol{\hat{u}}^*(x_0)=\boldsymbol{\tilde{u}}^*(x_0)$, $\boldsymbol{\hat{x}}^*(x_0)=\boldsymbol{\tilde{x}}^*(x_0)$, $\hat{J}_N^*(x_0;\delta)=\tilde{J}_N^*(x_0)$. Note that $\delta_0(x_0)$ is a continuous function of $x_0$ since both $\boldsymbol{\tilde{u}}^*(x_0)$ and $\boldsymbol{\hat{x}}^*(x_0)$ are continuous due to the smooth problem formulation~(\ref{eq:OptProblemRelBarrier}).
Let us further define $\delta'_0(x_0):=\max\{\delta \in\mathbb{R}_{\pplus}\,|\,\delta\leq\delta_0(x_0),\,\tilde{J}_N^*(x_0)\leq \varepsilon \bar{\beta}(\delta)\}$ with the property that $x_0\in\hat{\mathcal{X}}_N(\delta)$ $\forall\delta\leq\delta'_0(x_0)$.
As $\hat{J}_N^*(x_0;\delta)=\hat{J}_N^*(x_0;\delta_0(x_0))=\tilde{J}_N^*(x_0)$ for all $\delta\leq \delta_0(x_0)$ and $\bar{\beta}(\delta)$ is, on the other hand, continuous and strictly increasing for decreasing~$\delta$, we can state that $\delta'_0(x_0) \in\mathbb{R}_{\pplus}$ exists for any $x_0\in\mathcal{X}_N^\circ$.
Moreover, since both $\delta_0(x_0)$ and $\tilde{J}_N^*(x_0)$ are continuous, ${\delta}'_0(x_0)$ also is a continuous function of $x_0$. 
Consider now an arbitrary compact set $\mathcal{X}_0 \subseteq \mathcal{X}_N^\circ$ and define $\bar{\delta}_0\in \mathbb{R}_{\pplus}$ as $\bar{\delta}_0=\min_{x_0\in\mathcal{X}_0}{\delta}'_0(x_0)$. Due to the continuity of
 ${\delta}'_0(x_0)$ and the compactness of $\mathcal{X}_0$, this value always exists and it holds that
$\mathcal{X}_0\subseteq\hat{\mathcal{X}}_N(\delta)$ for all $\delta \leq \bar{\delta}_0$.
This implies that $\hat{\mathcal{X}}_N(\delta) \to \mathcal{X}_N$ as $\delta \to 0$, which completes the proof.
\end{proof} \vspace*{-0.1cm}
\begin{cor}\label{cor:exactRelX0}
 Consider the relaxed barrier function based MPC problem formulation~(\ref{eq:OptProblemRelBarrier}) with $\hat{F}(\cdot)$ given by~(\ref{eq:termCost_relXf}). For any compact set $\mathcal{X}_0 \subseteq \mathcal{X}_N^\circ$ there exists a $\bar{\delta}_0 \in \mathbb{R}_{+\!\!+}$ such that for any $\delta\leq\bar{\delta}_0$ and any $x(0) \in \mathcal{X}_0$ the feedback $u(k)=\hat{u}_0^*(x(k))$ asymptotically stabilizes the origin of system~(\ref{eq:discreteSystem}) under strict satisfaction of all input and state constraints.
\end{cor}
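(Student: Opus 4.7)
The plan is to combine Lemma~\ref{lem:XbetaProp} with Theorem~\ref{thm:exactRel} directly; no new machinery is required. First, I would invoke the concluding argument of Lemma~\ref{lem:XbetaProp}, which already establishes that for any compact set $\mathcal{X}_0 \subseteq \mathcal{X}_N^\circ$ the scalar
\begin{equation*}
\bar{\delta}_0 := \min_{x_0 \in \mathcal{X}_0} \delta'_0(x_0)
\end{equation*}
is well defined and strictly positive, with the property that $\mathcal{X}_0 \subseteq \hat{\mathcal{X}}_N(\delta)$ for every $\delta \leq \bar{\delta}_0$. Existence of this minimum rests on two facts proved in that lemma: the continuity of $\delta'_0(\cdot)$, itself a consequence of the continuity of the solution map of the smooth problem~(\ref{eq:OptProblemRelBarrier}) and of the nonrelaxed value function $\tilde{J}_N^*(\cdot)$, and the compactness of $\mathcal{X}_0$.

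Second, with the inclusion $\mathcal{X}_0 \subseteq \hat{\mathcal{X}}_N(\delta)$ in hand, I would simply invoke Theorem~\ref{thm:exactRel}: for any $\delta \leq \bar{\delta}_0$ and any initial condition $x(0) \in \mathcal{X}_0$, the condition $x(0) \in \hat{\mathcal{X}}_N(\delta)$ is satisfied, so the feedback $u(k) = \hat{u}_0^*(x(k))$ asymptotically stabilizes the origin of~(\ref{eq:discreteSystem}) under strict satisfaction of all input, state, and terminal set constraints, with guaranteed region of attraction containing $\mathcal{X}_0$. The Lyapunov decrease~(\ref{eq:jRelDrecease}) and the sublevel-set arguments based on $\bar{\beta}(\delta)$ have already carried all the technical weight.

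In this sense the corollary is a direct packaging of the two preceding results, and there is really no substantive obstacle to overcome. The only point worth being careful about is the strict inclusion $\mathcal{X}_0 \subseteq \mathcal{X}_N^\circ$: on $\partial \mathcal{X}_N$ the nonrelaxed value function $\tilde{J}_N^*(\cdot)$ diverges, so the continuous map $\delta'_0(x_0)$ would fail to be bounded away from zero and the minimum defining $\bar{\delta}_0$ would collapse to zero. The hypothesis that $\mathcal{X}_0$ is a compact subset of the \emph{open} interior of $\mathcal{X}_N$ is precisely what rules this out and makes the argument go through for a single, uniform choice of relaxation parameter.
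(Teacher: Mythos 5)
Your proof follows exactly the route the paper intends: Corollary~\ref{cor:exactRelX0} is stated as an immediate consequence of Lemma~\ref{lem:XbetaProp} (which supplies $\bar{\delta}_0=\min_{x_0\in\mathcal{X}_0}\delta'_0(x_0)$ and the inclusion $\mathcal{X}_0\subseteq\hat{\mathcal{X}}_N(\delta)$ for all $\delta\leq\bar{\delta}_0$) combined with Theorem~\ref{thm:exactRel}. Your argument, including the remark on why compactness of $\mathcal{X}_0$ inside the open interior $\mathcal{X}_N^\circ$ is essential, is correct and matches the paper's reasoning.
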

It has to be noted that the above conditions for closed-loop stability and constraint satisfaction are of course only sufficient and may be rather conservative.
In particular, for a given $\delta\!\in\!\mathbb{R}_{+\!\!+}$ the actual region of attraction of the closed-loop system may be considerably larger than the set~$\hat{\mathcal{X}}_N(\delta)$. Likewise, a very small $\delta$ may be needed to achieve $\mathcal{X}_0 \subset \hat{\mathcal{X}}_N(\delta)$ when $\mathcal{X}_0$ approaches $\mathcal{X}_N^\circ$. However, despite possible practical limitations, the presented results provide interesting insights and a theoretical justification for the use of relaxed barrier functions in the context of MPC. \\[0.25cm]
\textit{\indent B2) Global stabilization with a nonrelaxed terminal set}\\[0.15cm]
Assuming controllability of system~(\ref{eq:discreteSystem}), we will in the following present a second approach that allows to guarantee asymptotic stability for any initial condition by relaxing the barrier functions of state and input constraints while strictly enforcing the terminal set constraint.
\begin{ass} \label{ass:globalStab}
 The pair $(A,B)$ is controllable and the prediction horizon satisfies the condition $N\geq n$. 
Moreover, the terminal cost function is given as
\begin{equation}\label{eq:termCost_nonrelXf}
\hat{F}(x)=x^\TRANSP\! Px + \varepsilon {B}_f(x) \, ,
\end{equation}
where $P$ and ${B}_f(\cdot)$ are chosen according to Assumption~\ref{ass:barrierMPCStab}.
\end{ass}
\begin{thm} \label{thm:globalStabilization}
Let Assumptions~\ref{ass:barrierMPCStab},~\ref{ass:relBarriersXU}, and \ref{ass:globalStab} hold true and consider problem~(\ref{eq:OptProblemRelBarrier}) with the terminal cost $\hat{F}(\cdot)$ from~(\ref{eq:termCost_nonrelXf}). Then, independently of the relaxation parameter $\delta \in \mathbb{R}_{\pplus}$, the feedback $u(k)=\hat{u}_0^*(x(k))$ asymptotically stabilizes the origin of system~(\ref{eq:discreteSystem}) for any initial condition $x(0) \in \mathbb{R}^n$.  
\end{thm}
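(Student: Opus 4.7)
The plan is to extend the Lyapunov-based argument of Theorem~2 (terminal set version) to all of $\mathbb{R}^n$ by exploiting two complementary features of the current setup: the stage barriers $\hat{B}_{\mathrm{x}}, \hat{B}_{\mathrm{u}}$ are smooth and finite everywhere, so they impose no domain restriction on the open-loop problem, while the nonrelaxed terminal barrier $B_{\mathrm{f}}$ is still coercive on $\partial\mathcal{X}_f$ and therefore forces $x_N \in \mathcal{X}_f^\circ$. The four ingredients I need to collect are (i) global feasibility of problem~(\ref{eq:OptProblemRelBarrier}); (ii) recursive feasibility; (iii) a decrease of $\hat{J}_N^*$ along closed-loop trajectories; and (iv) continuity, positive definiteness and radial unboundedness of $\hat{J}_N^*$ on $\mathbb{R}^n$, so that it qualifies as a global Lyapunov function.

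For (i), I would invoke Assumption~\ref{ass:globalStab}: since $(A,B)$ is controllable and $N\geq n$, for every $x_0\in\mathbb{R}^n$ there exists an input sequence $\boldsymbol{u}$ steering $x_0$ to the origin in $N$ steps, and $0\in\mathcal{X}_f^\circ$. Along such a trajectory the stage cost contributions $\hat{\ell}(x_k,u_k)$ remain finite (relaxed barriers plus quadratic terms) and $B_{\mathrm{f}}(0)=0$, so the cost is finite. Combined with strong convexity of $\hat{J}_N(\cdot;x_0)$ in $\boldsymbol{u}$, this yields a unique minimizer $\hat{\boldsymbol{u}}^*(x_0)$ with $x_N^*(x_0)\in\mathcal{X}_f^\circ$ for every $x_0\in\mathbb{R}^n$. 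For (ii) and (iii), I would reuse the shifted-sequence candidate $\tilde{\boldsymbol{u}}^+=\{\hat{u}_1^*,\dots,\hat{u}_{N-1}^*,Kx_N^*\}$ at the successor state $x_0^+$. Invariance of $\mathcal{X}_f$ under $A_K$, together with $\varphi(A_Kx)\leq\varphi(x)<1$ for $x\in\mathcal{X}_f^\circ$, gives $A_Kx_N^*\in\mathcal{X}_f^\circ$, so recursive feasibility holds. The decrease
\begin{equation*}
\hat{J}_N^*(x_0^+;\delta)-\hat{J}_N^*(x_0;\delta)\leq -\hat{\ell}(x_0,\hat{u}_0^*(x_0))
\end{equation*}
then follows from the terminal-slot inequality $\hat{F}(A_Kx_N^*)-\hat{F}(x_N^*)+\hat{\ell}(x_N^*,Kx_N^*)\leq 0$ on $\mathcal{X}_f^\circ$, which I would establish exactly as in (\ref{eq:jdrecease})--(3d) of Theorem~\ref{thm:barrierMPCStab}.

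The step I expect to require the most care is precisely this terminal bound, because Assumption~\ref{ass:barrierMPCStab} supplies the quadratic majorant $B_K(x)\leq x^\TRANSP\!Mx$ only for the \emph{nonrelaxed} combined barrier. I would close the gap by noting that $\mathcal{X}_f\subset\mathcal{N}$ lies strictly inside $\mathcal{X}_K$, so all logarithmic arguments are positive there, and the relaxing functions $\beta_k(\cdot\,;\delta)$ and $\beta_e(\cdot\,;\delta)$ both satisfy $\hat{B}_i(z)\leq B_i(z)$ for $z\in\mathbb{R}_{\pplus}$; hence $\hat{B}_K(x)\leq B_K(x)\leq x^\TRANSP\!Mx$ on $\mathcal{X}_f$, which, combined with the Lyapunov equation in A3 and the nonrelaxed terminal barrier decrease $B_{\mathrm{f}}(A_Kx)-B_{\mathrm{f}}(x)\leq 0$ from A5, yields the required inequality. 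For (iv), continuity and positive definiteness of $\hat{J}_N^*$ follow from smoothness and recentering of all barriers (Assumption~\ref{ass:relBarriersXU}) and positive definiteness of $Q$, $R$, $P$; radial unboundedness I would obtain by bounding $\hat{J}_N^*(x)$ from below by the minimum of $\sum\|x_k\|_Q^2+\|u_k\|_R^2+\|x_N\|_P^2$ over input sequences reaching the compact set $\mathcal{X}_f$ from $x$, which grows at least quadratically in $\|x\|$. A standard Lyapunov argument with (\ref{eq:jRelDrecease}) then delivers global asymptotic stability of the origin for every $x(0)\in\mathbb{R}^n$, independently of $\delta$.
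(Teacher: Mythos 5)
Your proposal is correct and takes essentially the same route as the paper's proof: global feasibility from controllability with $N\geq n$ and the everywhere-finite relaxed stage barriers, the shifted candidate sequence terminated by $Kx_N^*$, the value-function decrease via $\hat{B}_K(x)\leq B_K(x)\leq x^\TRANSP\! M x$ on $\mathcal{X}_f$ combined with A3 and A5, and $\hat{J}_N^*(\cdot;\delta)$ as a positive definite, radially unbounded global Lyapunov function. The subtlety you single out (the quadratic majorant in A2 being stated for the nonrelaxed barrier) is precisely the point the paper handles in part ii) of the proof of Theorem~\ref{thm:exactRel}, to which the proof of Theorem~\ref{thm:globalStabilization} defers.
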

\begin{proof}
Due to the above controllability assumption, there exists for any $x_0 \in \mathbb{R}^n$ an input sequence $\boldsymbol{\hat{u}}(x_0)=\{\hat{u}_0, \dots, \hat{u}_{N-1}\}$ such that $x_N(\boldsymbol{\hat{u}}(x_0),x_0) \in \mathcal{X}_f^{\circ}$. Hence, $\boldsymbol{\hat{u}}^*(x_0)$ and $\hat{J}_N^*(x_0;\delta)$ are defined for any $x_0 \in \mathbb{R}^n$. 
Since the corresponding terminal state satisfies $x_N^*(x_0)\in\mathcal{X}_f^{\circ}$, the local controller $u=Kx$ can again be used to construct a feasible control sequence $\boldsymbol{\hat{u}}^+(x_0)=\{\hat{u}_1^*, \dots, \hat{u}_{N-1}^*, Kx_N^*\}$  for the successor state $x_0^+=Ax_0+B\hat{u}_0^*$. Since all parameters are chosen according to Assumption~\ref{ass:barrierMPCStab}, the same arguments as in part ii) of the proof of Theorem~\ref{thm:exactRel} can be used in order to  show that
\begin{equation}
 \hat{J}_N^*(x_0^+;\delta)- \hat{J}_N^*(x_0;\delta) \leq -\hat{\ell}_0(x_0,\hat{u}_0^*(x_0)) \ \forall x_0 \in \mathbb{R}^n \, .
\end{equation} 
Moreover, due to the design of all involved barrier functions, $\hat{J}_N^*(x;\delta)$ is a well-defined, positive definite, and radially unbounded function. 
Hence, $ \hat{J}_N^*(\cdot;\delta)$ can be employed as a Lyapunov function for proving global asymptotic stability of the origin, see part iii) of the proof of Theorem~\ref{thm:exactRel}. 
\end{proof}
\vspace*{0.2cm}
The above result illustrates how we can achieve stabilization of the origin for any initial condition, which makes the corresponding MPC scheme very robust against uncertainties, disturbances, or even infeasible start configurations. As we need to strictly enforce the terminal set constraint, the benefit of global stabilization comes with the cost of possible violations of the state and input constraints. However, using the arguments of the previous section, we can for any $\mathcal{X}_0 \subseteq \mathcal{X}_N^\circ$ still recover strict satisfaction of all state and input constraints by making the relaxation parameter $\delta$ arbitrarily small. \vspace*{-0.1cm}
\subsection{Closed-Loop Stability: Terminal Set Free Approaches} \label{sec:relBarrierStab2} 
In the previous section, we have seen how asymptotic stability and even strict constraint satisfaction can be guaranteed in the context of relaxed logarithmic barrier function based MPC by making use of suitable terminal set formulations. 
As in conventional MPC schemes, the terminal set is on the one hand used for ensuring the existence of a feasible local control law and, thus, the recursive feasibility of the corresponding open-loop optimal control problem.
On the other hand, only restricting the terminal state to a compact set around the origin allowed us to derive a quadratic upper bound for the barrier function $B_K(\cdot)$, see Assumption~\ref{ass:barrierMPCStab} and the proof of Theorem~\ref{ass:barrierMPCStab}.
In the following, we will show that the use of relaxed logarithmic barrier functions in fact allows us to circumvent these two problems and to design novel MPC approaches which not only eliminate the need for an explicit terminal set constraint but also to prove global asymptotic stability of the origin. \\[0.25cm]
\textit{\indent C1) Tail sequence based terminal cost function} \\[0.15cm]
It is a well-known result that closed-loop stability of both linear and nonlinear MPC schemes may be ensured by choosing the terminal cost as a suitable CLF that is an upper bound for the infinite-horizon cost-to-go, see~e.g.~\cite{jadbabaie01}. In the presence of input and state constraints, deriving such a function in global form is generally not possible, which directly motivates the use of a local CLF in combination with a corresponding terminal set constraint. However, when considering the relaxed problem formulation~(\ref{eq:OptProblemRelBarrier}), any input sequence which steers the state to the origin in a finite number of steps can be used to derive an upper bound on the infinite-horizon cost-to-go. 
\begin{ass} \label{ass:deadBeatStab2}
Let $\boldsymbol{{v}}(x):=\{{v}_0(x),\dots,{v}_{T-1}(x)\}$ be an input sequence which steers the state of system~(\ref{eq:discreteSystem}) to the origin in a finite number of $T\geq n$ steps for any $x \in \mathbb{R}^n$ and assume that $v_l(x)=0 \ \forall \, l=0,\dots,T-1 \Leftrightarrow x=0$. Furthermore, let $\boldsymbol{{z}}(x):=\{z_0(x),\dots,z_{T-1}(x)\}$ with $z_{{0}}(x)=x$, $z_{{l+1}}(x)=Az_{l}(x)+Bv_{l}(x)$, $l=0,\dots,T-1$, and $z_{T}(x)=0$ be the corresponding state sequence. 
\end{ass}
Based on Assumption~\ref{ass:deadBeatStab2} we propose to choose the terminal cost as
\begin{equation} \label{eq:deadBeatF}
\hat{F}(x)= \sum_{l=0}^{{T}-1} \hat{\ell}(z_l(x),v_l(x)) \, ,
\end{equation}
where $\hat{\ell}:\mathbb{R}^n\times\mathbb{R}^m \to \mathbb{R}_+$ refers to the already introduced modified stage cost based on relaxed logarithmic barrier functions for the state and input constraints,~cf.~(\ref{eq:OptProblemRelBarrier}). 
Due to the relaxation, $\boldsymbol{v}(x)$ is a valid but possibly suboptimal input sequence that steers the state from $x(0)=x$ to the origin in a finite number of steps. Consequently, $\hat{F}(\cdot)$ is an upper bound for the infinite-horizon cost-to-go, which allows us to state the following stability result.
\begin{thm} \label{thm:deadBeatStab}
 Let the Assumptions~\ref{ass:relBarriersXU} and \ref{ass:deadBeatStab2} hold and consider problem~(\ref{eq:OptProblemRelBarrier}) with the terminal cost $\hat{F}(\cdot)$ from~(\ref{eq:deadBeatF}). Then, independently of the relaxation parameter $\delta \in \mathbb{R}_{\pplus}$, 
 the feedback $u(k)=\hat{u}_0^*(x(k))$ asymptotically stabilizes the origin of system~(\ref{eq:discreteSystem}) for any initial condition $x(0) \in \mathbb{R}^n$.  
\end{thm}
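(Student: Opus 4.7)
My plan is to mirror the standard MPC value-function Lyapunov argument used in the proofs of Theorems~\ref{thm:barrierMPCStab} and~\ref{thm:exactRel}, with the tail-based $\hat F$ from~(\ref{eq:deadBeatF}) playing the role that the terminal cost played there. Since relaxed barrier functions are globally defined and no hard set constraint is ever enforced, the whole argument should live on all of $\mathbb{R}^n$. First, I would verify well-posedness and regularity: $\hat{\ell}$ is continuous, strongly convex and positive definite with $\hat{\ell}(0,0)=0$ by Assumption~\ref{ass:relBarriersXU}, and $\hat F$ is finite and continuous on $\mathbb{R}^n$ because $\boldsymbol{v}(x)$ exists for every $x$; the hypothesis $v_l(x)=0\ \forall l\Leftrightarrow x=0$ then forces $\hat F(x)>0$ for $x\neq 0$. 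Consequently the cost of~(\ref{eq:OptProblemRelBarrier}) is continuous, strongly convex in $\boldsymbol{u}$, and radially unbounded for every $x$, so $\hat{\boldsymbol{u}}^*(x)$ and $\hat{J}_N^*(x;\delta)$ exist for all $x\in\mathbb{R}^n$, and $\hat{J}_N^*(\cdot;\delta)$ is continuous, positive definite, and radially unbounded on $\mathbb{R}^n$.

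For the decrease step, I would use the usual shifted candidate at the successor state $x_0^+=Ax_0+B\hat u_0^*$, but append $v_0(x_N^*)$ in place of the local-controller tail, i.e.
\begin{equation*}
\hat{\boldsymbol{u}}^+(x_0)=\{\hat u_1^*,\ldots,\hat u_{N-1}^*,\,v_0(x_N^*)\},
\end{equation*}
producing the predicted state sequence $\{x_0^+,x_2^*,\ldots,x_N^*,z_1(x_N^*)\}$. Suboptimality, exactly as in Theorems~\ref{thm:barrierMPCStab} and~\ref{thm:exactRel}, reduces the decrease claim to the terminal inequality
\begin{equation*}
\hat{\ell}(x_N^*,v_0(x_N^*))+\hat F(z_1(x_N^*))\le \hat F(x_N^*),
\end{equation*}
which is the crux of the proof and the step I expect to be the real obstacle. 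The natural way to secure it is to realize $\boldsymbol{v}(x)$ through a linear dead-beat feedback $u=K_{\mathrm{d}}x$ with $(A+BK_{\mathrm{d}})^T=0$ (such $K_{\mathrm{d}}$ exists whenever $T\ge n$ and $(A,B)$ is reachable, which is essentially what Assumption~\ref{ass:deadBeatStab2} postulates). Setting $v_l(x)=K_{\mathrm{d}}(A+BK_{\mathrm{d}})^l x$ and $z_l(x)=(A+BK_{\mathrm{d}})^l x$ gives the shift identities $z_l(z_1(x))=z_{l+1}(x)$ and $v_l(z_1(x))=v_{l+1}(x)$, and the dead-beat property forces $v_{T-1}(z_1(x_N^*))=K_{\mathrm{d}}z_T(x_N^*)=0$. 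Reindexing and using $\hat{\ell}(0,0)=0$ then yields
\begin{equation*}
\hat F(z_1(x_N^*))=\sum_{l=1}^{T-1}\hat{\ell}(z_l(x_N^*),v_l(x_N^*))=\hat F(x_N^*)-\hat{\ell}(x_N^*,v_0(x_N^*)),
\end{equation*}
so the required inequality holds with equality.

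Combining the two steps gives
\begin{equation*}
\hat{J}_N^*(x_0^+;\delta)-\hat{J}_N^*(x_0;\delta)\le -\hat{\ell}(x_0,\hat u_0^*(x_0))\quad \forall\, x_0\in\mathbb{R}^n,
\end{equation*}
and since $\hat{J}_N^*(\cdot;\delta)$ is continuous, positive definite and radially unbounded on $\mathbb{R}^n$, it serves as a global Lyapunov function for the closed-loop dynamics, proving global asymptotic stability of the origin exactly as in part~(iii) of the proof of Theorem~\ref{thm:exactRel}. The main obstacle really is the terminal inequality: a generic $\boldsymbol{v}(x)$ as in Assumption~\ref{ass:deadBeatStab2} is not automatically shift-invariant, and without shift invariance $\hat F(z_1(x_N^*))$ cannot be compared directly to the tail of $\hat F(x_N^*)$. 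The linear dead-beat realization above removes this obstruction canonically, and once it is in place the absence of a terminal set is harmless, because all objects involved are already defined and well behaved on the whole state space.
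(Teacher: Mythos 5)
Your proof follows the same overall route as the paper's: establish global well-posedness of $\hat{J}_N^*(\cdot;\delta)$ from the relaxation and Assumption~\ref{ass:deadBeatStab2}, shift the optimal input sequence and append $v_0(x_N^*)$, reduce the decrease property to a terminal inequality, and then invoke the value function as a global Lyapunov function. The difference lies in how the crux is handled. The paper treats the new tail as ``the tail sequences within the terminal cost are simply appended by zero values'' and declares the telescoping identity $\hat{J}_N(\hat{\boldsymbol{u}}^+(x_0),x_0^+;\delta)-\hat{J}_N^*(x_0;\delta)=-\hat{\ell}(x_0,\hat{u}_0^*(x_0))$ to be straightforward; this implicitly requires that the \emph{parametrized} tail $\boldsymbol{v}(z_1(x_N^*))$ used inside $\hat{F}(z_1(x_N^*))$ coincides with (or is no more expensive in $\hat{\ell}$-cost than) the shifted-and-zero-padded sequence $\{v_1(x_N^*),\dots,v_{T-1}(x_N^*),0\}$. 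You make this shift-invariance requirement explicit, correctly observe that Assumption~\ref{ass:deadBeatStab2} alone does not supply it, and secure it by realizing $\boldsymbol{v}$ through a linear dead-beat feedback, for which the terminal inequality holds with equality. This is a more rigorous treatment of the key step, but it also narrows the scope: the theorem as stated (and the paper's subsequent discussion) is meant to cover tails such as the zero-terminal-state LQR sequence of~(\ref{eq:zeroTstateLQR}), for which your exact shift identities fail ($\boldsymbol{v}(z_1(x))$ is the $T$-step LQR solution from $z_1(x)$, not the padded $(T-1)$-step tail, and its optimality is with respect to $\ell$ rather than $\hat{\ell}$), so the required inequality $\hat{F}(z_1(x_N^*))\le\sum_{l=1}^{T-1}\hat{\ell}(z_l(x_N^*),v_l(x_N^*))$ is not automatic there either. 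In short: your argument is correct and complete for the dead-beat realization, and in identifying shift-invariance as the real content of the ``straightforward'' step you have put your finger on the one place where the paper's proof is thinner than it appears.
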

\begin{proof}
Due to Assumption~\ref{ass:deadBeatStab2}, there exist for any $x_0 \in \mathbb{R}^n$ and any input sequence  $\boldsymbol{\hat{u}}(x_0)$ with resulting terminal state $x_N=x_N(\boldsymbol{\hat{u}}(x_0),x_0)$ suitable tail sequences $\boldsymbol{{v}}(x_N)$ and $\boldsymbol{{z}}(x_N)$. Moreover, as all barrier functions for the state and input constraints are relaxed, see Assumption~\ref{ass:relBarriersXU}, both the stage and terminal cost are always well defined. Hence, also $\boldsymbol{\hat{u}}^*(x_0)$ and $\hat{J}_N^*(x_0;\delta)$ are defined for any $x_0 \in \mathbb{R}^n$, which shows that problem~(\ref{eq:OptProblemRelBarrier}) always admits a feasible solution.  \\
We now want to show that the value function~$\hat{J}_N^*(x(k);\delta)$ decreases under the applied feedback for all $x(0)\in\mathbb{R}^n$.
Let $\boldsymbol{\hat{u}}^*(x_0)=\{\hat{u}_0^*, \dots, \hat{u}_{N-1}^*\}$ and $\boldsymbol{x}^*(x_0)=\{x_0,x_1^*,\dots,x_N^*\}$ denote the optimal open-loop input and state sequences for a given initial condition $x_0\in \mathbb{R}^n$ and let $\boldsymbol{v}^*(x_0):=\boldsymbol{v}(x_N^*(x_0))$ and $\boldsymbol{z}^*(x_0):=\boldsymbol{z}(x_N^*(x_0))$ be the corresponding tail sequences for the resulting predicted terminal state.
Consider now the successor state $x_0^+=Ax_0+B\hat{u}_0^*$. Clearly, $\boldsymbol{\hat{u}}^+(x_0)=\{\hat{u}_1^*,\dots,\hat{u}_{N-1}^*,v_0^*\}$ and $\boldsymbol{{v}}^+(x_0)=\{v_1^*,\dots,v_{T-1}^*,0\}$ are suboptimal input sequences which steer the state from $x_0^+$ to the origin in a finite number of $N+T-1$ steps. The resulting state sequences are given by $\boldsymbol{{x}}^+(x_0)=\{x_1^*,\dots,x_N^*,z_1^*\}$ and $\boldsymbol{{z}}^+(x_0)=\{z_1^*,\dots,z_{T-1}^*,0\}$. Using the above suboptimal input and state sequences and the fact that the tail sequences within the terminal cost are simply appended by zero values, it is straightforward to show that
$ \hat{J}_N(\boldsymbol{\hat{u}}^+(x_0),x_0^+;\delta)-\hat{J}_N^*(x_0;\delta) =-\hat{\ell}(x_0,\hat{u}_0^*(x_0)) \ \forall x_0 \in \mathbb{R}^n$.
From the suboptimality of $\boldsymbol{\hat{u}}^+(x_0)$ it follows immediately that
\begin{equation}
 \hat{J}_N^*(x_0^+;\delta)-\hat{J}_N^*(x_0;\delta)\leq -\hat{\ell}(x_0,\hat{u}_0^*(x_0)) \ \forall x_0 \in \mathbb{R}^n \ .
\end{equation} 
By the design of the relaxed barriers and the assumption that $v_l(x)=0 \ \forall \, l=0,\dots,T-1 \Leftrightarrow x=0$, the function $\hat{J}_N^*(\cdot;\delta)$ is well-defined, positive definite, and radially unbounded. Thus, it can be used as a Lyapunov function for the closed-loop system, proving global asymptotic stability of the origin.
\end{proof}
 Note that in order to ensure convexity of the terminal cost function $\hat{F}(\cdot)$, the elements of the parametrized tail input sequence $\boldsymbol{v}(x)$ should be affine in the argument $x$. In combination with the condition that $v_l(x)=0 \ \forall \, l=1,\dots,T-1 \Leftrightarrow x=0$, this in fact limits $\boldsymbol{v}(\cdot)$ to contain a sequence of linear state feedback laws, i.e. $\boldsymbol{v}(x)=\{K_0 x, \dots, K_{T-1}x\}$. In the following, we briefly discuss two different design approaches for $\boldsymbol{v}(x)$ that meet this requirement and thus allow to guarantee stability of the closed-loop system as well as convexity of the resulting overall cost function. \\[0.2cm]
The easiest way to design suitable tail sequences $\boldsymbol{v}(\cdot)$ and $\boldsymbol{z}(\cdot)$ is by making use of a linear dead-beat controller. To this end, we may choose a terminal control gain $K \in \mathbb{R}^{m \times n}$ in such a way that the matrix $A_K=A+BK$ is nilpotent, i.e., that it satisfies $A_K^r=0$ for some $r\leq n$. Under the assumption of controllability, this can for example be achieved by a suitable pole placement procedure which ensures that all eigenvalues of the matrix $A_K$ are located at the origin of the complex plane. Based on these ideas, we may set $T=n$ and choose the tail input sequence as 
 \begin{equation}\label{eq:deadBeatK}
  \boldsymbol{v}(x)=\left\lbrace {K}x,\dots,{K}\left(A+B{K}\right)^{n-1}\!x \right\rbrace \
 \end{equation} 
which results in the corresponding state sequence $\boldsymbol{z}(x)=\{x,\dots,A_K^{n-1}x\}$. Due to the design of the matrix $K$, it obviously holds that $z_n=A_K^n\!x=0$. Thus, $\boldsymbol{v}(\cdot)$ and $\boldsymbol{z}(\cdot)$ satisfy the conditions in Assumption~\ref{ass:deadBeatStab2}, and Theorem~\ref{thm:deadBeatStab} can be used to conclude stability of the closed-loop system. 
Note that, depending on the algorithm, the pole placement problem may become numerically ill-conditioned when assigning all poles to exactly the same location. Hence, for practical implementations it may be meaningful to distribute the poles of $A_K$ in an $\varepsilon$-ball around the origin. \\[0.2cm]
While the above approach allows for a rather simple design of the tail sequences $\boldsymbol{v}(\cdot)$ and $\boldsymbol{z}(\cdot)$, and thus of the terminal cost $\hat{F}(\cdot)$, the implicit requirement that the predicted terminal state is steered to the origin in at most $n$ steps might be restrictive, leading to suboptimal or even aggressive behavior of the overall closed-loop system. In the following, we present a second approach that eliminates this restriction by allowing for tail sequences with $T\geq n$ elements. 
In order to enforce in addition a certain optimality with respect to the underlying performance criterion, we propose to choose the parametrized tail input sequence $\boldsymbol{v}(\cdot)$ as the solution to the finite-horizon LQR problem with zero terminal state constraint, i.e., \\[-0.3cm]
\begin{subequations} \label{eq:zeroTstateLQR}
\begin{align}
\boldsymbol{v}(x)&=\arg\min_{\boldsymbol{v}} \sum_{l=0}^{{T}-1} \ell(z_l,v_l)  \\
\mbox{s.\,t.} \ \ & {z}_{l+1}=A{z}_{l}+B v_{l}, \ l=0,\dots,T-1\, , \\
& z_{T}=0, \ z_0=x \\[-0.4cm]
\nonumber
\end{align}
\end{subequations}
for the standard quadratic stage cost $\ell(z,v)= \| z \|_Q^2 + \|v\|_R^2 $. 
It can be shown that the solution $\boldsymbol{v}^*(x)$ to problem~(\ref{eq:zeroTstateLQR}) can be expressed as a sequence of static linear state feedbacks of the form $v_l^*(x)=K_l x$ with $l=0,\dots,T-1$. Explicit expressions for the optimal control law and the corresponding state (and costate) trajectories were given in~\cite{ntogramatzidis03} based on the solution of two unconstrained infinite-horizon LQR problems and a suitable iteration scheme.    
Furthermore, in~\cite{ntogramatzidis05} the authors present explicit solutions for more general start and end point constraints based on a parametrization of all the solutions of the extended symplectic system.
However, as discussed in the Appendix, the optimal input sequence can also be computed directly in vector form as $V^*(x)=\begin{bmatrix} v_0^{*\TRANSP}(x) & \cdots & v_{T-1}^{*\TRANSP}(x) \end{bmatrix}^\TRANSP=K_V x$ with $K_V \in \mathbb{R}^{Tm\times n}$. 
In both cases, the resulting terminal cost function can be formulated as \vspace*{-0.1cm}
\begin{equation} \label{eq:quadrFdeadBeat}
\hat{F}(x)= x^\TRANSP\! Px+ \varepsilon \sum_{l=0}^{{T}-1}\hat{B}_{\mathrm{x}}(z_l(x))+  \hat{B}_{\mathrm{u}}(v_l(x)) \, ,
\end{equation}
where $z_l(x)$ and $v_l(x)$ denote the elements of the respective state and input tail sequences and the matrix $P\in \mathbb{S}^n_{\pplus}$ can be constructed by inserting these sequences into the quadratic stage cost $\ell(\cdot,\cdot)$, see Appendix. 
As for the previous approach, global asymptotic stability of the closed-loop system can be concluded from Theorem~\ref{thm:deadBeatStab}. However, the possibility to choose the length of the tail sequences within the construction of the terminal cost typically leads to an overall improved closed-loop performance. Especially if no constraints are active or violated by the open-loop state and input tail sequences, the terminal cost function $\hat{F}(\cdot)$ based on~(\ref{eq:zeroTstateLQR}) may give a quite good approximation of the real infinite-horizon cost-to-go. 
\begin{rem}
 Note that the discussed approaches force the predicted state to the origin in $N+T$ steps and are hence similar to MPC approaches that are based on an explicit zero terminal state constraint, see e.g.~\cite{keerthi88}. 
 However, by using the proposed terminal cost function, this behavior is enforced implicitly, i.e., without introducing an explicit equality constraint within the optimization problem. Furthermore, stability can be guaranteed for a global region of attraction and the tail sequence horizon $T$ may be made arbitrary large without increasing the number of optimization variables.
\end{rem}
\textit{ C2) Quadratic terminal cost} \quad \\[0.15cm]
In the previous section, we exploited the fact that the relaxation of input and state constraint allows to apply any sequence of inputs at the end of the prediction horizon. In the following, we will show that in the presence of relaxed barrier functions we can in addition derive a \emph{global} quadratic upper bound for the combined state and input constraint barrier function, which makes it possible to use a purely quadratic terminal cost function term without the need for a corresponding terminal set constraint.  
\begin{ass}\label{ass:globalUpperBound}
Let $\hat{B}_{\mathrm{x}}(\cdot)$ and $\hat{B}_{\mathrm{u}}(\cdot)$ be relaxed gradient or weight recentered logarithmic barrier functions according to Assumption~\ref{ass:relBarriersXU}, see (\ref{eq:bxGrad}) and (\ref{eq:bxWeight}), respectively. Let the relaxing function be quadratic and given by $\beta_2(\cdot;\delta)$ from~(\ref{eq:relFunHauser}).
\end{ass}
\begin{lem} \label{lem:quadraticBound}
Let $(A,B)$ be stabilizable and let $K \in \mathbb{R}^{m\times n}$ be a corresponding stabilizing linear control gain. Furthermore, let Assumption~\ref{ass:globalUpperBound} hold and consider $\hat{B}_K(x)=\hat{B}_{\mathrm{x}}(x)+\hat{B}_{\mathrm{u}}(Kx)$ for a given $\delta\in\mathbb{R}_{\pplus}$. Then, it holds that
   \begin{equation}
   \hat{B}_K(x)\leq \ x^\TRANSP\!\left(M_{\mathrm{x}} + K^\TRANSP \!M_{\mathrm{u}}K \right) x \quad \forall x \in \mathbb{R}^n \, ,
  \end{equation} 
  where the matrices $M_{\mathrm{x}}\in \mathbb{R}^{n \times n}$ and $M_{\mathrm{u}} \in \mathbb{R}^{m\times m}$ are defined as 
$M_{\mathrm{x}}:=\frac{1}{2\delta^2}C_{\mathrm{x}}^\TRANSP\mathrm{diag}\left(\mathds{1}+w_{\mathrm{x}}\right)C_{\mathrm{x}}^{}$ and $M_{\mathrm{u}}:=\frac{1}{2\delta^2}C_{\mathrm{u}}^\TRANSP\mathrm{diag}\left(\mathds{1}+w_{\mathrm{u}}\right)C_{\mathrm{u}}^{}$, respectively. Here, $w_{\mathrm{x}} \in \mathbb{R}^{q_{\mathrm{x}}}_+$ and $w_{\mathrm{u}} \in \mathbb{R}^{q_{\mathrm{u}}}_+$ are suitable weighting vectors when considering weight recentered barrier functions, whereas $w_{\mathrm{x}}=0$, $w_{\mathrm{u}}=0$ in the gradient recentering case.
\end{lem}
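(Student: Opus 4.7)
The plan is to establish a global upper bound on the Hessian of the combined relaxed barrier $\hat B_K$ and then to integrate it via Taylor's theorem, exploiting the recentering identities $\hat B_K(0)=0$ and $\nabla \hat B_K(0)=0$.

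The heart of the argument, and the only place where the specific choice $\beta=\beta_2$ enters, is a scalar observation. For the quadratic relaxing function obtained from~\eqref{eq:relFunHauser} with $k=2$, differentiating Definition~\ref{def:relBarrier} yields $\hat B''(z)=1/z^2$ for $z>\delta$ and $\hat B''(z)=1/\delta^2$ for $z\leq\delta$. Since $1/z^2<1/\delta^2$ on the former range and equality holds on the latter, one obtains the uniform scalar bound $\hat B''(z)\leq 1/\delta^2$ for every $z\in\mathbb{R}$. This is exactly the property that forces us to restrict to $\beta_2$: for polynomial $\beta_k$ with $k>2$ or for the exponential $\beta_e$, the second derivative blows up as $z\to-\infty$, ruling out any global constant Hessian bound.

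I would then lift this scalar estimate to each constraint. Writing $z_i(x)=d_{\mathrm x}^i-C_{\mathrm x}^i x$, the recentering corrections in~\eqref{eq:bxGrad} and~\eqref{eq:bxWeight} are either affine in $x$ (gradient case, $w_{\mathrm x}=0$) or pure rescalings (weight case), contributing to the Hessian only through a factor $(1+w_{\mathrm x}^i)$. The chain rule together with the scalar bound then gives
\begin{equation*}
\nabla^2 \hat B_{\mathrm x,i}(x) = (1+w_{\mathrm x}^i)\,\hat B''(z_i(x))\,(C_{\mathrm x}^i)^\TRANSP C_{\mathrm x}^i \preceq \tfrac{1+w_{\mathrm x}^i}{\delta^2}(C_{\mathrm x}^i)^\TRANSP C_{\mathrm x}^i,
\end{equation*}
and summing in $i$ yields $\nabla^2 \hat B_{\mathrm x}(x)\preceq 2M_{\mathrm x}$ on all of $\mathbb{R}^n$. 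The identical argument applied to the input barrier produces $\nabla^2 \hat B_{\mathrm u}(u)\preceq 2M_{\mathrm u}$ on $\mathbb{R}^m$, and one more chain rule delivers $\nabla^2 \hat B_K(x)\preceq 2\bigl(M_{\mathrm x}+K^\TRANSP M_{\mathrm u} K\bigr)$ globally.

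To finish, Assumption~\ref{ass:relBarriersXU} pins down $\hat B_{\mathrm x}(0)=\hat B_{\mathrm u}(0)=0$ and $\nabla \hat B_{\mathrm x}(0)=\nabla \hat B_{\mathrm u}(0)=0$, which transfer to $\hat B_K$ via the composition with $u=Kx$. Taylor's theorem with integral remainder therefore collapses to
\begin{equation*}
\hat B_K(x) = \int_0^1 (1-t)\, x^\TRANSP \nabla^2 \hat B_K(tx)\, x\, dt \leq x^\TRANSP\bigl(M_{\mathrm x}+K^\TRANSP M_{\mathrm u} K\bigr)x,
\end{equation*}
which is the claim. I expect the only genuine obstacle to be the scalar Hessian bound for $\beta_2$; once that is in hand, the chain-rule lifting and the Taylor integration are entirely routine, and stabilizability of $(A,B)$ enters only to guarantee the existence of $K$, not in the estimate itself.
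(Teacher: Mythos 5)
Your proposal is correct and follows essentially the same route as the paper: recentering kills the zeroth- and first-order Taylor terms, the Hessian of each relaxed recentered barrier is bounded globally by $\frac{1+w^i}{\delta^2}(C^i)^\TRANSP C^i$ thanks to the quadratic relaxation, and Taylor's theorem (you use the integral remainder, the paper the Lagrange form at an intermediate point $\lambda x$ --- an immaterial difference) converts the Hessian bound $\nabla^2\hat B_{\mathrm x}\preceq 2M_{\mathrm x}$ into $\hat B_{\mathrm x}(x)\leq x^\TRANSP M_{\mathrm x}x$, and likewise for the input barrier composed with $K$. Your explicit remark that the argument fails for $\beta_k$ with $k>2$ and for $\beta_e$ because their second derivatives are unbounded is a correct observation that the paper leaves implicit in Assumption~\ref{ass:globalUpperBound}.
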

\begin{proof}
 We exemplary consider the state constraints and show that $\hat{B}_{\mathrm{x}}(x)\leq \|x\|_{M_{\mathrm{x}}}^2 \ \forall x \in \mathbb{R}^n$. Based on Taylor's Theorem (see \cite[Theorem 2.1]{nocedalWright99}) we know that
 $ \hat{B}_{\mathrm{x}}(x)=\hat{B}_{\mathrm{x}}(0)+[\nabla \hat{B}_{\mathrm{x}}(0)]^\TRANSP x + \frac{1}{2} x^T \nabla^2 \hat{B}_{\mathrm{x}}(\lambda x) x $ for some $\lambda \in (0,1)$. Due the recentering of the barrier functions, the first two terms vanish and we get $\hat{B}_{\mathrm{x}}(x)= \frac{1}{2} x^T \nabla^2 \hat{B}_{\mathrm{x}}(\lambda x) x $ for some $\lambda \in (0,1)$. In particular, it holds that $\hat{B}_{\mathrm{x}}(x) \leq x^\TRANSP \! M x \ \forall x \in \mathbb{R}^n$ for any $M \in \mathbb{S}^n_{\pplus}$ satisfying $\nabla^2 B_{\mathrm{x}}(x)\preceq M \ \forall x \in \mathbb{R}^n$.
When considering the quadratic relaxing function $\beta_2(\cdot;\delta)$ from~(\ref{eq:relFunHauser}), the Hessian of $\hat{B}_{\mathrm{x}}(\cdot)$ is given by
\begin{subequations}
\begin{align}
\nabla^2 \hat{B}_{\mathrm{x}}(x)&=C_{\mathrm{x}}^\TRANSP \mathrm{diag}\big(D_1(x), \dots, D_{q_{\mathrm{x}}}(x)\big) C_{\mathrm{x}}\, , \\
 \mbox{where\ \ }D_i(x)&=\begin{cases} \frac{1+w_{\mathrm{x}}^i}{(-C_{\mathrm{x}}^ix+d_{\mathrm{x}}^i)^2} & -C_{\mathrm{x}}^ix+d_{\mathrm{x}}^i > \delta \\
 \ \ \frac{1+w_{\mathrm{x}}^i}{\delta^2} & -C_{\mathrm{x}}^ix+d_{\mathrm{x}}^i \leq \delta \, . \end{cases}
\end{align} 
\end{subequations}
\noindent Note that $w_{\mathrm{x}}=0$ in the case of gradient recentered barrier functions. Since $D_i(x) \leq \frac{1+w_{\mathrm{x}}^i}{\delta^2} \ \forall x\in\mathbb{R}^n$, it follows immediately that 
$ \nabla^2 \hat{B}_{\mathrm{x}}(x) \preceq \frac{1}{\delta^2} C_{\mathrm{x}}^\TRANSP \mathrm{diag}\left(\mathds{1}+w_{\mathrm{x}} \right) C_{\mathrm{x}} \ \forall x \in \mathbb{R}^n$.
Combining this upper bound on the Hessian with the previous arguments, it follows that $\hat{B}_{\mathrm{x}}(x) \leq  x^\TRANSP \! M_{\mathrm{x}}\,x \ \forall x \in \mathbb{R}^n$ with $M_{\mathrm{x}}=\frac{1}{2\delta^2} C_{\mathrm{x}}^\TRANSP \mathrm{diag}\left(\mathds{1}+w_{\mathrm{x}} \right) C_{\mathrm{x}}$. Similarly, it is straightforward to show that $\hat{B}_{\mathrm{u}}(Kx) \leq  x^\TRANSP \! K^\TRANSP M_{\mathrm{u}} K x \ \forall x \in \mathbb{R}^n$ with $M_{\mathrm{u}}$ as defined above.
\end{proof} \vspace*{0.3cm}
\noindent Let now the controller matrix $K$ and the matrix $P \in \mathbb{S}^n_{\pplus}$ be solutions to the modified Riccati equation
\begin{subequations} \label{eq:riccatiP}
\begin{align} 
K&=-\left(R+B^\TRANSP PB+\varepsilon M_{\mathrm{u}}\right)^{-1}B^\TRANSP PA  \\
P&=A_{K}^{\TRANSP} P A_K^{}+K^\TRANSP \!(R+\varepsilon M_{\mathrm{u}})K+Q + \varepsilon M_{\mathrm{x}} \ ,
\end{align}
\end{subequations}
where $M_{\mathrm{x}}$ and $M_{\mathrm{u}}$ are defined according to Lemma~\ref{lem:quadraticBound}.
We then propose to choose the terminal cost function as 
\begin{equation} \label{eq:quadraticF}
\hat{F}(x)=x^\TRANSP\!P\,x \, , 
\end{equation}
where $P\in \mathbb{S}^n_{\pplus}$ is the solution to~(\ref{eq:riccatiP}b).
Note that the controller gain~$K$ is in principle arbitrary. However, the above choice results in a minimal value of the terminal cost function and ensures that for $\varepsilon \to 0$ or in the absence of constraints, $K$ and $P$ will reduce to the solution of the unconstrained LQR problem.  
We can now state the following stability result.
\begin{thm} \label{thm:quadrBoundStab}
 Let Assumption~\ref{ass:globalUpperBound} hold true and consider problem~(\ref{eq:OptProblemRelBarrier}) with the terminal cost function~$\hat{F}(\cdot)$ from~(\ref{eq:quadraticF}).
 Then, independently of the relaxation parameter $\delta \in \mathbb{R}_{\pplus}$, 
 the feedback $u(k)=\hat{u}_0^*(x(k))$ asymptotically stabilizes the origin of system~(\ref{eq:discreteSystem}) for any initial condition $x(0) \in \mathbb{R}^n$.  
\end{thm}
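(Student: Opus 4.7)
The plan is to follow the same blueprint used in the proofs of Theorems~\ref{thm:exactRel} and~\ref{thm:deadBeatStab}: first establish global existence of the optimizer, then build a shifted suboptimal input sequence for the successor state, obtain a Lyapunov-type cost decrease, and finally invoke the positive-definite and radially unbounded structure of $\hat{J}_N^*(\cdot;\delta)$ to conclude global asymptotic stability.

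Global feasibility is immediate. Under Assumption~\ref{ass:globalUpperBound} (together with Assumption~\ref{ass:relBarriersXU}) the relaxed recentered barriers $\hat{B}_{\mathrm{x}}(\cdot)$ and $\hat{B}_{\mathrm{u}}(\cdot)$ are finite on all of $\mathbb{R}^n$ and $\mathbb{R}^m$, and the quadratic terminal cost $\hat{F}(x)=x^\TRANSP P x$ is finite everywhere. Hence the cost in~(\ref{eq:OptProblemRelBarrier}) is a continuous, strongly convex, and radially unbounded function of $\boldsymbol{u}$ for every $x_0\in\mathbb{R}^n$, so a unique minimizer $\boldsymbol{\hat{u}}^*(x_0)$ and a well-defined value function $\hat{J}_N^*(x_0;\delta)$ exist on the whole state space.

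For a fixed $x_0\in\mathbb{R}^n$ with predicted terminal state $x_N^*$, I would consider the shifted candidate $\boldsymbol{\hat{u}}^+(x_0)=\{\hat{u}_1^*,\dots,\hat{u}_{N-1}^*,K x_N^*\}$, with $K$ from~(\ref{eq:riccatiP}a). The usual telescoping computation that was employed in the previous proofs yields
\begin{equation*}
\hat{J}_N(\boldsymbol{\hat{u}}^+(x_0),x_0^+;\delta)-\hat{J}_N^*(x_0;\delta)=-\hat{\ell}(x_0,\hat{u}_0^*)+\Delta_N,
\end{equation*}
where $\Delta_N:=\hat{F}(A_K x_N^*)-\hat{F}(x_N^*)+\hat{\ell}(x_N^*,K x_N^*)$. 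The one nontrivial step is to show $\Delta_N\leq 0$ globally. Expanding $\hat{F}$ and $\hat{\ell}$ gives $\Delta_N=x_N^{*\TRANSP}\bigl(A_K^\TRANSP P A_K-P+Q+K^\TRANSP R K\bigr)x_N^*+\varepsilon\hat{B}_K(x_N^*)$. Invoking the global quadratic bound $\hat{B}_K(x)\leq x^\TRANSP(M_{\mathrm{x}}+K^\TRANSP M_{\mathrm{u}} K)x$ from Lemma~\ref{lem:quadraticBound} and then substituting the modified Riccati equation~(\ref{eq:riccatiP}b) makes the resulting quadratic form vanish, so $\Delta_N\leq 0$ for every $x_N^*\in\mathbb{R}^n$.

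By suboptimality of $\boldsymbol{\hat{u}}^+(x_0)$ this yields the global descent $\hat{J}_N^*(x_0^+;\delta)-\hat{J}_N^*(x_0;\delta)\leq -\hat{\ell}(x_0,\hat{u}_0^*(x_0))$ for all $x_0\in\mathbb{R}^n$. Since $\hat{J}_N^*(\cdot;\delta)$ is positive definite by construction of the recentered relaxed barriers and the quadratic terminal cost, and is radially unbounded for the same reason, it qualifies as a global Lyapunov function, and the standard argument used in part~(iii) of the proof of Theorem~\ref{thm:exactRel} then delivers global asymptotic stability of the origin. The main obstacle is essentially conceptual rather than computational: one must recognize that the role previously played by the \emph{local} quadratic bound on $B_K(\cdot)$ inside a terminal set (Assumption~\ref{ass:barrierMPCStab}, A2) is here taken over by the \emph{global} quadratic bound of Lemma~\ref{lem:quadraticBound}, and that~(\ref{eq:riccatiP}) has been specifically tailored so that the terminal cost $x^\TRANSP P x$ absorbs this global bound; once this is seen, every remaining step reduces to standard MPC manipulations already carried out in this paper.
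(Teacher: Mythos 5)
Your proposal is correct and follows essentially the same route as the paper's own proof: global definedness of $\hat{J}_N^*(\cdot;\delta)$ from the relaxation, the shifted candidate sequence terminated by $Kx_N^*$, the global quadratic bound of Lemma~\ref{lem:quadraticBound} combined with the modified Riccati equation~(\ref{eq:riccatiP}b) to kill the terminal increment, and the standard Lyapunov conclusion. Your identification of the conceptual point — that the global bound of Lemma~\ref{lem:quadraticBound} replaces the local bound A2 of Assumption~\ref{ass:barrierMPCStab} and that~(\ref{eq:riccatiP}) is built to absorb it — is exactly the argument the paper makes.
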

\begin{proof}
 Again, both $\boldsymbol{\hat{u}}^*(x_0)$ and $\hat{J}_N^*(x_0;\delta)$ are defined for any $x_0 \in \mathbb{R}^n$ due to the relaxed state and input constraints. Based on the quadratic terminal cost function above, it is now straightforward to show that the value function~$\hat{J}_N^*(x(k);\delta)$ decreases under the applied MPC feedback for all $x(k)\in\mathbb{R}^n$.
For any $x_0 \in \mathbb{R}^n$ there exist optimal input and state sequences $\boldsymbol{\hat{u}}^*(x_0)=\{\hat{u}_0^*, \dots, \hat{u}_{N-1}^*\}$ and $\boldsymbol{x}^*(x_0)=\{x_0,x_1^*,\dots,x_N^*\}$ as well as the suboptimal input and state sequences $\boldsymbol{\hat{u}}^+(x_0)=\{\hat{u}_1^*,\dots,\hat{u}_{N-1}^*,Kx_N^*\}$ and $\boldsymbol{x}^+(x_0)=\{x_1^*,\dots,x_N^*,A_Kx_N^*\}$  for the successor state $x_0^+=Ax_0+B\hat{u}_0^*$ with $\hat{J}_N^*(x_0^+;\delta)-\hat{J}_N^*(x_0;\delta)\leq \hat{J}_N(\boldsymbol{\hat {u}}^+(x_0),x_0^+,\delta)-\hat{J}_N^*(x_0;\delta)$.
This gives us
\begin{align}
&\hat{J}_N^*(x_0^+;\delta)-\hat{J}_N^*(x_0;\delta) \leq \lVert A_K x_N^* \rVert^2_{P} - \lVert x_N^*\rVert^2_P \ \dots\\
\ &   \ +\lVert x_N^* \rVert^2_Q + \lVert Kx_N^* \rVert^2_{R}  + \varepsilon B_K(x_N^*)  
\ \leq  0 \ \ \forall \ x_N^* \in \mathbb{R}^n \, , \nonumber
\end{align}
where we used the global quadratic bound on $B_K(\cdot)$ from Lemma~\ref{lem:quadraticBound} and the choice of the terminal cost matrix~$P$, see~(\ref{eq:riccatiP}b). Thus, $\hat{J}_N^*(\cdot;\delta)$ can again be used as a Lyapunov function for proving global asymptotic stability.
\end{proof} 
In summary, we can conclude that the concept of relaxed logarithmic barrier functions allows us to design globally stabilizing MPC schemes without making use of an explicit terminal set or state constraint. Instead, the presented approaches are based on a suitable design of the respective terminal cost function term that heavily exploits the properties and advantages of the underlying barrier function relaxation. 
Thus, the presented results may be seen as a barrier function based counterpart to existing terminal set free MPC approaches relying on suitable terminal cost {functions~(\!\cite{jadbabaie01,limon06})}, a sufficiently large prediction horizon~(\!\cite{chmielewski96,scokaert98,bemporadExplicitLQR}), or particular controllability assumptions~(\!\cite{grimm05,gruene12,boccia14}). \\
However, with the exception of Theorem~\ref{thm:exactRel}, no guarantees on the satisfaction of state and input constraints have been discussed so far. In fact, the presented global stability results can only be achieved since the relaxed barrier functions allow for in principle arbitrarily large constraint violations. In the next section, we will show that the maximal violation of state and input constraints in closed-loop operation is always bounded, and we discuss how we can compute and control it a priori for a given set of initial conditions by adjusting the relaxation parameter $\delta\in\mathbb{R}_{\pplus}$.

\subsection{Maximal Constraint Violation Guarantees} \label{sec:relBarrierConstr}
As outlined above, one of the main advantages of model predictive control is given by its ability to deal with input and state constraints. 
On the other hand, possible violations of the corresponding constraints seem to be inherent to the concept of relaxed barrier functions.
In the following, we will show how guarantees on the strict satisfaction or the maximal violation of input and state constraints can be given for the globally stabilizing relaxed logarithmic barrier function based MPC schemes from the previous section. 
We start with the following Lemma which allows to upper bound the values of the relaxed barrier functions in closed-loop operation by an expression that depends on the initial condition.
\begin{lem}\label{lem:relBarrierBounds}
Let problem~(\ref{eq:OptProblemRelBarrier}) be formulated based on one of the globally stabilizing MPC schemes discussed above (Theorems~\ref{thm:globalStabilization},~\ref{thm:deadBeatStab},~\ref{thm:quadrBoundStab}).
Furthermore, let $\boldsymbol{x}_{cl}=\{x(0),x(1),\dots\}$ and $\boldsymbol{u}_{cl}=\{u(0),u(1),\dots \}$ with $u(k)=\hat{u}_0^*(x(k))$ for $k\geq 0$ denote the resulting state and input trajectories of the closed-loop system. Then, for any initial condition $x_0=x(0) \in \mathbb{R}^n$ and any $k\geq 0$ it holds that 
\vspace*{-0.35cm}
\begin{subequations}
\begin{align}
  \hat{B}_{\mathrm{x}}(x(k))&\leq \frac{1}{\varepsilon} \left(\hat{J}_N^*(x_0;\delta)-x_0^\TRANSP  P_{\mathrm{uc}}^{*} \, x_0 \right)\, , \\
  \hat{B}_{\mathrm{u}}(u(k))&\leq \frac{1}{\varepsilon} \left(\hat{J}_N^*(x_0;\delta)-x_0^\TRANSP  P_{\mathrm{uc}}^* \,\, x_0 \right)\, ,
\end{align}
\end{subequations}
where $P_{\mathrm{uc}}^* \in \mathbb{S}^n_{\pplus}$ is the solution to the discrete-time algebraic Riccati equation related to the infinite-horizon LQR problem.
\end{lem}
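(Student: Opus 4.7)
The plan is to exploit the Lyapunov decrease inequality of the value function (which has already been established, globally, in the proofs of Theorems~\ref{thm:globalStabilization}, \ref{thm:deadBeatStab}, and~\ref{thm:quadrBoundStab}) to obtain a telescoping sum, and then to separate off the quadratic part of the stage cost by comparing it to the infinite-horizon LQR cost. First I would invoke the common decrease property $\hat{J}_N^*(x(k+1);\delta)-\hat{J}_N^*(x(k);\delta)\leq -\hat{\ell}(x(k),u(k))$, valid for all $x(k)\in\mathbb{R}^n$ under the applied MPC feedback, together with the fact that asymptotic stability and continuity of $\hat{J}_N^*(\cdot;\delta)$ with $\hat{J}_N^*(0;\delta)=0$ imply $\hat{J}_N^*(x(k);\delta)\to 0$ as $k\to\infty$.

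Telescoping the inequality from $k=0$ up to $k=K-1$ and then letting $K\to\infty$ would yield
\begin{equation*}
\sum_{k=0}^{\infty} \hat{\ell}(x(k),u(k)) \ \leq\ \hat{J}_N^*(x_0;\delta)\, ,
\end{equation*}
and by splitting $\hat{\ell}=\ell+\varepsilon\hat{B}_{\mathrm{x}}+\varepsilon\hat{B}_{\mathrm{u}}$ this rewrites as
\begin{equation*}
\sum_{k=0}^{\infty}\!\ell(x(k),u(k)) + \varepsilon\!\sum_{k=0}^{\infty}\!\bigl[\hat{B}_{\mathrm{x}}(x(k))+\hat{B}_{\mathrm{u}}(u(k))\bigr] \leq \hat{J}_N^*(x_0;\delta).
\end{equation*}

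Next I would use the characterization of $P_{\mathrm{uc}}^*$ as the value-function matrix of the unconstrained infinite-horizon LQR problem, so that $x_0^\TRANSP P_{\mathrm{uc}}^* x_0$ is the minimum of $\sum_{k=0}^{\infty}\ell(x(k),u(k))$ over all admissible input sequences for system~(\ref{eq:discreteSystem}). Since the closed-loop trajectory $(\boldsymbol{x}_{cl},\boldsymbol{u}_{cl})$ is itself such an admissible sequence (it drives the state to the origin by asymptotic stability, so the infinite sum converges), we get $\sum_{k=0}^{\infty}\ell(x(k),u(k))\geq x_0^\TRANSP P_{\mathrm{uc}}^* x_0$. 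Substituting this lower bound into the previous inequality gives
\begin{equation*}
\varepsilon \sum_{k=0}^{\infty}\bigl[\hat{B}_{\mathrm{x}}(x(k))+\hat{B}_{\mathrm{u}}(u(k))\bigr] \ \leq\ \hat{J}_N^*(x_0;\delta)-x_0^\TRANSP P_{\mathrm{uc}}^* x_0 \, .
\end{equation*}

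Finally, because the relaxed recentered barrier functions satisfy $\hat{B}_{\mathrm{x}}(\cdot)\geq 0$ and $\hat{B}_{\mathrm{u}}(\cdot)\geq 0$ on all of $\mathbb{R}^n$ and $\mathbb{R}^m$ by Assumption~\ref{ass:relBarriersXU} (they are positive definite with respect to the origin), every single summand is dominated by the full sum, so for any $k\geq 0$
\begin{equation*}
\varepsilon\,\hat{B}_{\mathrm{x}}(x(k)) \leq \hat{J}_N^*(x_0;\delta)-x_0^\TRANSP P_{\mathrm{uc}}^* x_0
\end{equation*}
and the analogous inequality for $\hat{B}_{\mathrm{u}}(u(k))$, dividing by $\varepsilon$ yields the claim. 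The main conceptual step (and the only nontrivial one) will be the sharpening by $x_0^\TRANSP P_{\mathrm{uc}}^* x_0$ via the LQR comparison; the telescoping itself is routine, and one should briefly justify convergence of $\hat{J}_N^*(x(k);\delta)\to 0$ from asymptotic stability plus continuity and $\hat{J}_N^*(0;\delta)=0$.
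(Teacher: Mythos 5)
Your proposal is correct and follows essentially the same route as the paper's proof: telescoping the Lyapunov decrease inequality to bound $\sum_{k=0}^\infty \hat{\ell}(x(k),u(k))$ by $\hat{J}_N^*(x_0;\delta)$, lower-bounding the quadratic part of the sum by $x_0^\TRANSP P_{\mathrm{uc}}^* x_0$ via optimality of the unconstrained infinite-horizon LQR solution, and then using nonnegativity of the recentered relaxed barriers to dominate each individual summand. The only (welcome) addition is your explicit justification of $\hat{J}_N^*(x(k);\delta)\to 0$, which the paper states without elaboration.
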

\begin{proof}
Based on the respective Theorems we know that
 $\hat{J}_N^*(x(k+1);\delta)-\hat{J}_N^*(x(k);\delta) \leq -\hat{\ell}(x(k),u(k))$ for any $k\geq0$ and any $x(0) \in \mathbb{R}^n$. In particular, this ensures that the closed-loop system is asymptotically stable and that $\lim_{k\to \infty}\hat{J}_N^*(x(k);\delta)=0$. Summing up over all future sampling instants and using a telescoping sum on the left hand side, we get that $\hat{J}_N^*(x(0);\delta) \geq \sum_{k=0}^\infty \hat{\ell}(x(k),u(k))$. Furthermore, $\sum_{k=0}^\infty \hat{\ell}(x(k),u(k))= \sum_{k=0}^\infty \ell(x(k),u(k))+\varepsilon \sum_{k=0}^\infty \hat{B}_{\mathrm{x}}(x(k))+\hat{B}_{\mathrm{u}}(u(k))$ and $ \sum_{k=0}^\infty \ell(x(k),u(k))\geq x(0)^\TRANSP \! P_{\mathrm{uc}}^*\, x(0)$ due to the optimality of the unconstrained infinite-horizon LQR solution. In combination this yields
 \begin{equation}
  \hat{J}_N^*(x(0);\delta) \geq x(0)^\TRANSP \! P_{\mathrm{uc}}^* \,x(0)+\varepsilon \sum_{k=0}^\infty \hat{B}_{\mathrm{x}}(x(k))+\hat{B}_{\mathrm{u}}(u(k))
 \end{equation} 
 and finally, since all terms in the sum on the right hand side are positive definite, 
 $\varepsilon \hat{B}_{\mathrm{x}}(x(k)) \leq   \hat{J}_N^*(x(0);\delta)- x(0)^\TRANSP \! P_{\mathrm{uc}}^* x(0)$ as well as 
 $ \varepsilon \hat{B}_{\mathrm{u}}(u(k)) \leq   \hat{J}_N^*(x(0);\delta)- x(0)^\TRANSP \! P_{\mathrm{uc}}^* x(0)$ $\forall k\geq0$.
\end{proof} \vspace*{0.1cm}
\noindent For ease of notation, let now the scalar $\hat{\alpha}(x_0;\delta) \in \mathbb{R}_{+}$ be defined as \vspace*{-0.15cm}
\begin{equation} \label{eq:alphaEq}
\hat{\alpha}(x_0;\delta):= \hat{J}_N^*(x_0;\delta)-x_0^\TRANSP P_{\mathrm{uc}}^*\, x_0 \, .
\end{equation}
As the relaxed barrier function are positive definite and radially unbounded, Lemma~\ref{lem:relBarrierBounds} implies that also the violations of the corresponding constraints are bounded. 
In particular, for any $\varepsilon\in\mathbb{R}_{\pplus}$, $\delta\in\mathbb{R}_{\pplus}$ and any initial condition $x_0 =x(0)\in \mathbb{R}^n$, upper bounds for the maximal violations of state and input constraints are given by
\begin{subequations}\label{eq:maxConstrViol}
\begin{align}
 \hat{z}_{x}^i(x_0;\delta)= &\max_{x} \bigg\lbrace C_{\mathrm{x}}^i x -d_{\mathrm{x}}^i\, \big\vert \ \hat{B}_{\mathrm{x}}(x)\leq \frac{\hat{\alpha}(x_0;\delta)}{\varepsilon} \bigg\rbrace  \\
  \hat{z}_{u}^j(x_0;\delta)= &\max_{u} \bigg\lbrace C_{\mathrm{u}}^j x -d_{\mathrm{u}}^j\, \big\vert  \ \hat{B}_{\mathrm{u}}(u)\leq \frac{\hat{\alpha}(x_0;\delta)}{\varepsilon} \bigg\rbrace 
\end{align}
\end{subequations}
for $i=1,\dots,q_{\mathrm{x}}$ and $j=1,\dots,q_{\mathrm{u}}$, where $\hat{\alpha}(x_0;\delta)$ is defined according to~(\ref{eq:alphaEq}). Based on this, we can formulate the following result on the maximal constraint violations that can occur in closed-loop operation. 
\begin{thm} \label{thm:maxConstrViolation}
Let problem~(\ref{eq:OptProblemRelBarrier}) be formulated based on one of the globally stabilizing MPC schemes discussed above (Theorems~\ref{thm:globalStabilization},~\ref{thm:deadBeatStab},~\ref{thm:quadrBoundStab}) and let $\boldsymbol{x}_{cl}=\{x(0),x(1),\dots\}$ and $\boldsymbol{u}_{cl}=\{u(0),u(1),\dots \}$ with $u(k)=\hat{u}_0^*(x(k))$ for $k\geq 0$ denote the resulting closed-loop state and input trajectories. 
Then, for any $x_0=x(0) \in \mathbb{R}^n$ and any $k\geq 0$ it holds that 
\begin{equation}
    C_{\mathrm{x}} x(k)\leq d_{\mathrm{x}} + \hat{z}_{\mathrm{x}}(x_0,\delta), \quad  C_{\mathrm{u}} u(k)\leq d_{\mathrm{u}} + \hat{z}_{\mathrm{u}}(x_0,\delta),
\end{equation} 
where the elements of the maximal constraint violation vectors $\hat{z}_{\mathrm{x}}(x_0;\delta) \in \mathbb{R}^{q_{\mathrm{x}}}$ and $\hat{z}_{\mathrm{u}}(x_0;\delta) \in \mathbb{R}^{q_{\mathrm{u}}}$ are given by~(\ref{eq:maxConstrViol}).
\end{thm}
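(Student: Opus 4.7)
The plan is to observe that this theorem is essentially a direct corollary of Lemma~\ref{lem:relBarrierBounds} together with the definition of the per-constraint violation bounds in~(\ref{eq:maxConstrViol}). The heavy lifting, namely establishing a uniform upper bound on the barrier function values along closed-loop trajectories, has already been accomplished in Lemma~\ref{lem:relBarrierBounds}. What remains is to convert this bound on the barrier function values into a bound on the constraint residuals, and this essentially follows from a feasibility argument for the optimization problems defining~$\hat{z}_{\mathrm{x}}^i$ and~$\hat{z}_{\mathrm{u}}^j$.

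First, I would fix an arbitrary initial condition $x_0 = x(0) \in \mathbb{R}^n$ and an arbitrary time index $k\geq 0$. Since problem~(\ref{eq:OptProblemRelBarrier}) is formulated based on one of the globally stabilizing schemes from Theorems~\ref{thm:globalStabilization}, \ref{thm:deadBeatStab}, or~\ref{thm:quadrBoundStab}, Lemma~\ref{lem:relBarrierBounds} applies directly and yields
\begin{equation}
 \hat{B}_{\mathrm{x}}(x(k)) \leq \frac{\hat{\alpha}(x_0;\delta)}{\varepsilon}, \qquad \hat{B}_{\mathrm{u}}(u(k)) \leq \frac{\hat{\alpha}(x_0;\delta)}{\varepsilon},
\end{equation}
where $\hat{\alpha}(x_0;\delta)$ is the quantity defined in~(\ref{eq:alphaEq}).

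Next, I would observe that for every index $i \in \{1,\dots,q_{\mathrm{x}}\}$ the above inequality means that $x(k)$ is a feasible point for the convex maximization problem defining $\hat{z}_{\mathrm{x}}^i(x_0;\delta)$ in~(\ref{eq:maxConstrViol}a). By optimality, the objective value attained at any feasible point is bounded above by the optimal value, so $C_{\mathrm{x}}^i x(k) - d_{\mathrm{x}}^i \leq \hat{z}_{\mathrm{x}}^i(x_0;\delta)$. Collecting these component-wise inequalities into a single vector inequality yields $C_{\mathrm{x}} x(k) \leq d_{\mathrm{x}} + \hat{z}_{\mathrm{x}}(x_0;\delta)$. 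The same argument applied to each row of $C_{\mathrm{u}}$ gives the corresponding inequality for the input. Since $k$ and $x_0$ were arbitrary, the claim follows for all $k \geq 0$ and all $x(0)\in\mathbb{R}^n$.

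There is no genuine obstacle in this proof; the result is really a reformulation of Lemma~\ref{lem:relBarrierBounds} that makes its implications for state and input constraint violations explicit. The only subtlety worth mentioning is that the bound $\hat{z}(x_0;\delta)$ depends on $\hat{J}_N^*(x_0;\delta)$, so in order to render the theorem usable in practice one would need to further upper bound $\hat{J}_N^*(x_0;\delta)$ uniformly over a prescribed set of initial conditions $\mathcal{X}_0$, which can be done by exploiting the convexity and continuity of $\hat{J}_N^*(\cdot;\delta)$ on the compact set $\mathcal{X}_0$. This observation is however not needed for the proof of the stated theorem itself.
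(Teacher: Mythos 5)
Your proof is correct and follows exactly the same route as the paper, which simply states that the result follows directly from Lemma~\ref{lem:relBarrierBounds} and the definitions in~(\ref{eq:alphaEq}) and~(\ref{eq:maxConstrViol}); your feasibility argument for the maximization problems is precisely the spelled-out version of that one-line justification.
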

\begin{proof}
Follows directly from Lemma~\ref{lem:relBarrierBounds} and the definition of $\hat{\alpha}(x_0;\delta)$ and $\hat{z}_{x}^i(x_0;\delta)$, $ \hat{z}_{u}^j(x_0;\delta)$ in (\ref{eq:alphaEq}) and~(\ref{eq:maxConstrViol}).
\end{proof} 
Note that the optimization problems in~(\ref{eq:maxConstrViol}) are convex as $\hat{B}_{\mathrm{x}}(\cdot)$ and $\hat{B}_{\mathrm{u}}(\cdot)$ are convex functions. 
Moreover, when considering gradient recentered logarithmic barrier functions, we can exploit the fact that $\hat{B}_{\mathrm{x}}(\cdot)$ and $\hat{B}_{\mathrm{u}}(\cdot)$ consist only of positive definite terms. In this case, upper bounds for the maximal constraint violations are given by $\hat{z}_{\mathrm{x}}^i(x_0;\delta)\leq \max \{-\bar{z}_{\mathrm{x}}^{i},0\}$, where $\bar{z}_{\mathrm{x}}^{i}$ is given as
  \begin{equation} \label{eq:ziEq}
 \bar{z}_{\mathrm{x}}^{i}\!=\min\left\lbrace z\,\big|\,\beta({z};\delta)+\ln(d_{\mathrm{x}}^i)+\frac{z}{d_{\mathrm{x}}^i}-1=\frac{\hat{\alpha}(x_0;\delta)}{\varepsilon}\right\rbrace 
  \end{equation} 
for $i=1,\dots,q_{\mathrm{x}}$ and similar for the input constraints,~cf.~(\ref{eq:bxGrad}).
If the relaxing function $\beta(\cdot\,;\delta)$ is chosen as $\beta_e(\cdot\,;\delta)$ or $\beta_k(\cdot\,;\delta)$ with $k>2$, a nonlinear equation solver can be used to find suitable solutions $\bar{z}_{\mathrm{x}}^{i}$. However, in the case of $\beta(\cdot\,;\delta)=\beta_k(\cdot\,;\delta)$ with $k=2$, the equality constraint in~(\ref{eq:ziEq}) reduces to a quadratic equation in ${z}$ and a closed form expression of the maximal constraint violation can be given as
\begin{equation}
\bar{z}_{\mathrm{x}}^i= \delta\left(\gamma_{i,1}-\sqrt{\gamma_{i,1}^2-\gamma_{i,2}} \right), \ i=1,\dots,q_{\mathrm{x}} \, ,
\end{equation} 
where $\gamma_{i,1}:=2-\frac{\delta}{d_{\mathrm{x}}^i}$ and $\gamma_{i,2}:=1+2\ln\left(\frac{d_{\mathrm{x}}^i}{\delta}\right)-\frac{2}{\varepsilon}\,\hat{\alpha}(x_0;\delta)$, see also Lemma~21 in \cite{feller14}.
Note that the values for the maximal constraint violations obtained above may also be negative. In this case, the corresponding constraints will not be violated but rather satisfied with the respective safety margin. 
An important consequence of Theorem~\ref{thm:maxConstrViolation} is that, as stated in the following Lemma, there always exists a set of initial conditions for which the input and state constraints will not be violated at all.
\begin{lem}
Let problem~(\ref{eq:OptProblemRelBarrier}) be formulated based on one of the globally stabilizing MPC schemes discussed above (Theorems~\ref{thm:globalStabilization},~\ref{thm:deadBeatStab},~\ref{thm:quadrBoundStab}) and let $\boldsymbol{x}_{cl}=\{x(0),x(1),\dots\}$ and $\boldsymbol{u}_{cl}=\{u(0),u(1),\dots \}$ with $u(k)=\hat{u}_0^*(x(k))$ for $k\geq 0$ denote the resulting closed-loop state and input trajectories. Furthermore, for $\delta \in \mathbb{R}_{\pplus}$ let the set $\hat{\mathcal{X}}'_{N}(\delta)$ be defined as 
 \begin{equation}
  \hat{\mathcal{X}}'_{N}(\delta):=\left\lbrace x \in \mathcal{X}\, |\, \hat{\alpha}(x,\delta) \leq \varepsilon \bar{\beta}'(\delta) \right\rbrace \, ,
 \end{equation} 
 with $\bar{\beta}'(\delta):=\min\{\bar{\beta}_{\mathrm{x}}(\delta), \bar{\beta}_{\mathrm{u}}(\delta)\}$, where $\bar{\beta}_{\mathrm{x}}(\delta)$ and $\bar{\beta}_{\mathrm{u}}(\delta)$ are defined according to Definition~\ref{def:betaBar}. Then, for any initial condition $x(0)\in \hat{\mathcal{X}}'_{N}(\delta)$ and any $k\geq 0$ it holds that $C_{\mathrm{x}} x(k) \leq d_{\mathrm{x}}$ as well as $C_{\mathrm{u}} u(k) \leq d_{\mathrm{u}}$.
\end{lem}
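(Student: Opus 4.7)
The plan is a direct combination of Lemma~\ref{lem:relBarrierBounds} with the level-set inclusion from Lemma~\ref{lem:relBarrierLevelSets}. Since the statement concerns only state and input constraints, I do not need to invoke the terminal-set part of Lemma~\ref{lem:relBarrierLevelSets}; the parts $\mathcal{S}_{\hat{B}_{\mathrm{x}}}\subseteq\mathcal{X}$ and $\mathcal{S}_{\hat{B}_{\mathrm{u}}}\subseteq\mathcal{U}$ rely only on Assumption~\ref{ass:relBarriersXU}, which is in force for all three globally stabilizing schemes referenced in the statement.

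First I would unpack membership in the newly defined set: $x(0)\in\hat{\mathcal{X}}'_N(\delta)$ means
\begin{equation}
\hat{\alpha}(x(0);\delta)\leq\varepsilon\,\bar{\beta}'(\delta)=\varepsilon\min\{\bar{\beta}_{\mathrm{x}}(\delta),\bar{\beta}_{\mathrm{u}}(\delta)\}.
\end{equation}
Next, since the underlying MPC scheme is one of those covered by Theorems~\ref{thm:globalStabilization}, \ref{thm:deadBeatStab}, \ref{thm:quadrBoundStab}, Lemma~\ref{lem:relBarrierBounds} applies with $x_0=x(0)$ and yields, for every $k\geq 0$,
\begin{equation}
\varepsilon\,\hat{B}_{\mathrm{x}}(x(k))\leq\hat{\alpha}(x(0);\delta),\qquad \varepsilon\,\hat{B}_{\mathrm{u}}(u(k))\leq\hat{\alpha}(x(0);\delta).
\end{equation}
Chaining these two estimates gives $\hat{B}_{\mathrm{x}}(x(k))\leq\bar{\beta}_{\mathrm{x}}(\delta)$ and $\hat{B}_{\mathrm{u}}(u(k))\leq\bar{\beta}_{\mathrm{u}}(\delta)$ for all $k\geq 0$, so that $x(k)\in\mathcal{S}_{\hat{B}_{\mathrm{x}}}$ and $u(k)\in\mathcal{S}_{\hat{B}_{\mathrm{u}}}$.

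Finally, Lemma~\ref{lem:relBarrierLevelSets} gives $\mathcal{S}_{\hat{B}_{\mathrm{x}}}\subseteq\mathcal{X}$ and $\mathcal{S}_{\hat{B}_{\mathrm{u}}}\subseteq\mathcal{U}$, which translates immediately into $C_{\mathrm{x}}x(k)\leq d_{\mathrm{x}}$ and $C_{\mathrm{u}}u(k)\leq d_{\mathrm{u}}$ for every $k\geq 0$, the desired conclusion.

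There is essentially no obstacle, since this is a corollary that stitches together the closed-loop bound on barrier values (Lemma~\ref{lem:relBarrierBounds}) with the threshold characterization that separates the sublevel sets of $\hat{B}_{\mathrm{x}}(\cdot)$ and $\hat{B}_{\mathrm{u}}(\cdot)$ from the exterior of $\mathcal{X}$ and $\mathcal{U}$ (Definition~\ref{def:betaBar} and Lemma~\ref{lem:relBarrierLevelSets}). The only delicate bookkeeping point is to note that the $\mathcal{X}$ and $\mathcal{U}$ parts of Lemma~\ref{lem:relBarrierLevelSets} do not depend on Assumption~\ref{ass:barrierXf}; they use only convexity and positive definiteness of the recentered relaxed barriers, which follow from Assumption~\ref{ass:relBarriersXU} alone and are available in each of the three terminal-set-free and terminal-set-with-nonrelaxed-$\mathcal{X}_f$ settings.
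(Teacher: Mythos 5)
Your proof is correct and follows essentially the same route as the paper's: apply Lemma~\ref{lem:relBarrierBounds} to bound $\varepsilon\hat{B}_{\mathrm{x}}(x(k))$ and $\varepsilon\hat{B}_{\mathrm{u}}(u(k))$ by $\hat{\alpha}(x(0);\delta)\leq\varepsilon\bar{\beta}'(\delta)$, then invoke the sublevel-set inclusions of Lemma~\ref{lem:relBarrierLevelSets} to conclude $x(k)\in\mathcal{X}$ and $u(k)\in\mathcal{U}$. Your added observation that the $\mathcal{X}$ and $\mathcal{U}$ parts of Lemma~\ref{lem:relBarrierLevelSets} rest only on Assumption~\ref{ass:relBarriersXU} is a correct and worthwhile clarification that the paper leaves implicit.
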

\begin{proof}
For any $x_0=x(0) \in   \hat{\mathcal{X}}'_{N}(\delta)$ and all $k\geq 0$ it holds due to Lemma~\ref{lem:relBarrierBounds} and the definition of $\bar{\beta}'(\delta)$ that $\varepsilon \hat{B}_{\mathrm{x}}(x(k)) \leq $ $\hat{\alpha}(x_0;\delta)$ $\leq \varepsilon \bar{\beta}_{\mathrm{x}}(\delta)$ and $\varepsilon \hat{B}_{\mathrm{u}}(u(k)) \leq \hat{\alpha}(x,\delta) \leq \varepsilon \bar{\beta}_{\mathrm{u}}(\delta)$. In combination with the definition of $\bar{\beta}_{\mathrm{x}}(\delta)$ and $\bar{\beta}_{\mathrm{u}}(\delta)$, it follows directly that $x(k) \in \mathcal{X}$, $u(k) \in \mathcal{U}$ $\forall k\geq 0$, cf.~Lemma~\ref{lem:relBarrierLevelSets}. 
\end{proof} 
\noindent One may now ask how large we can make the set~$\hat{\mathcal{X}}'_{N}(\delta)$ of initial conditions for which  strict satisfaction of all input and state constraints is guaranteed. In the following, we give an answer to this question for each of the different MPC approaches discussed above. In particular, we show that it is always possible to recover the feasible set of a suitable corresponding nonrelaxed MPC formulation.
\begin{thm} \label{thm:xBeta}
 Let problem~(\ref{eq:OptProblemRelBarrier}) be formulated based on one of the globally stabilizing MPC schemes discussed above (Theorems~\ref{thm:globalStabilization},~\ref{thm:deadBeatStab},~\ref{thm:quadrBoundStab}). 
Moreover, let $\mathcal{X}_N$ be defined as 
\begin{equation} \label{eq:xN_def_tSet}
 \mathcal{X}_N=\{x \in \mathcal{X}: \exists\, \boldsymbol{u} \text{\ s.t.\ } x_k \in \mathcal{X}, u_k\in \mathcal{U}, x_N \in \mathcal{X}_f\} 
\end{equation} 
when considering the approach based on a nonrelaxed terminal set constraint~(Theorem~\ref{thm:globalStabilization}), as
\begin{equation} \label{eq:xN_def_deadBeat}
\begin{aligned}
 \mathcal{X}_N=\{x \in \mathcal{X}:&\ \exists\, \boldsymbol{u} \text{\ s.t.\ } x_k \in \mathcal{X}, u_k\in \mathcal{U}, \\
 & \qquad \quad \ z_l(x_N) \in \mathcal{X}, v_l(x_N) \in \mathcal{U}\} \quad
\end{aligned} 
\end{equation}
when considering the approach based on auxiliary tail sequences $z_l(\cdot)$ and $v_l(\cdot)$ with $l=1,\dots,T-1$ (Theorem~\ref{thm:deadBeatStab}), and as 
\begin{equation} \label{eq:xN_def_quadr}
 \mathcal{X}_N=\{x \in \mathcal{X}: \exists\, \boldsymbol{u} \text{\ s.t.\ } x_k \in \mathcal{X}, u_k\in \mathcal{U}, x_N = 0\} \ \ 
\end{equation} 
when considering the approach based on a purely quadratic terminal cost function~(Theorem~\ref{thm:quadrBoundStab}), where in all three cases $x_0=x$ and $x_k=x_k(\boldsymbol{u},x)$ for $k=1,\dots,N$. Then, for any compact set of initial conditions $\mathcal{X}_0 \subseteq \mathcal{X}_N^\circ$ there exists a $\bar{\delta}_0 \in \mathbb{R}_{\pplus}$ such that $\mathcal{X}_0\subseteq   \hat{\mathcal{X}}'_{N}(\delta)$ for all $\delta \leq \bar{\delta}_0$.
\end{thm}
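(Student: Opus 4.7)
The plan is to extend the continuity/compactness argument used in the proof of Lemma~\ref{lem:XbetaProp}, adapted to each of the three feasible-set definitions \eqref{eq:xN_def_tSet}--\eqref{eq:xN_def_quadr}. The core observation is that, on the one hand, $\bar{\beta}'(\delta)\to\infty$ monotonically as $\delta\to 0$ (each boundary term of $\hat{B}_{\mathrm{x}}$, $\hat{B}_{\mathrm{u}}$ approaches the unrelaxed logarithmic singularity), while on the other hand $\hat{J}_N^*(x_0;\delta)$ can be upper bounded uniformly for $\delta$ sufficiently small by the cost associated with a strictly feasible suboptimal trajectory that witnesses $x_0\in\mathcal{X}_N^\circ$. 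Consequently $\hat{\alpha}(x_0,\delta)=\hat{J}_N^*(x_0;\delta)-x_0^\TRANSP P_{\mathrm{uc}}^* x_0$ stays bounded while $\varepsilon\bar{\beta}'(\delta)$ diverges, so the inclusion $x_0\in\hat{\mathcal{X}}'_N(\delta)$ must hold eventually.

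The main steps I would carry out, for a fixed $x_0\in \mathcal{X}_N^\circ$, are as follows. First, exploit the definition of $\mathcal{X}_N^\circ$ to construct a suboptimal input sequence $\bar{\boldsymbol{u}}(x_0)$ that produces a strictly feasible open-loop trajectory and, depending on the scheme, also satisfies the extra structural requirement: for the setup of Theorem~\ref{thm:globalStabilization}, the terminal state lies in $\mathcal{X}_f^\circ$ so that the nonrelaxed $B_{\mathrm{f}}(\cdot)$ remains finite; for Theorem~\ref{thm:deadBeatStab}, the induced tail sequences $z_l(x_N)$, $v_l(x_N)$ lie strictly inside $\mathcal{X}$ and $\mathcal{U}$, so that the barriers appearing in $\hat{F}(\cdot)$ are bounded; for Theorem~\ref{thm:quadrBoundStab}, the trajectory reaches the origin, making the quadratic terminal term vanish and the sum terminate. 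Second, define a slack $\delta_0(x_0)>0$ equal to the smallest positive margin attained by any of these constraints along the witnessing trajectory (including the tail or the terminal membership). For all $\delta\le \delta_0(x_0)$ the relaxed barrier values on $\bar{\boldsymbol{u}}(x_0)$ coincide with the unrelaxed ones, hence $\hat{J}_N^*(x_0;\delta)\le \bar{J}(x_0):=\tilde{J}_N(\bar{\boldsymbol{u}}(x_0),x_0)$, which is a finite quantity depending continuously on $x_0$. Third, since $\bar{\beta}'(\delta)$ is continuous and strictly increasing as $\delta\downarrow 0$ with $\bar{\beta}'(\delta)\to\infty$, there exists $\delta_0'(x_0)\in(0,\delta_0(x_0)]$ such that $\bar{J}(x_0)-x_0^\TRANSP P_{\mathrm{uc}}^* x_0 \le \varepsilon\bar{\beta}'(\delta)$ for all $\delta\le\delta_0'(x_0)$, guaranteeing $x_0\in\hat{\mathcal{X}}'_N(\delta)$.

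To pass from pointwise to uniform, I would verify that $\delta_0'(\cdot)$ depends continuously on $x_0$ (continuity of the witnessing trajectory's slack and of $\bar{J}$ combined with continuity of $\bar{\beta}'(\delta)$), and then set $\bar{\delta}_0:=\min_{x_0\in\mathcal{X}_0}\delta_0'(x_0)$, which lies in $\mathbb{R}_{\pplus}$ by compactness of $\mathcal{X}_0$ and strict positivity of $\delta_0'$. Then $\mathcal{X}_0\subseteq\hat{\mathcal{X}}'_N(\delta)$ for every $\delta\le\bar{\delta}_0$. The main obstacle I anticipate is the case-by-case construction of the continuous witness $\bar{\boldsymbol{u}}(x_0)$: in particular, for Theorem~\ref{thm:deadBeatStab} one needs to verify that the prefixed tail maps $z_l$, $v_l$ inherit strict feasibility from $x_N\in\mathcal{X}_N^\circ$ and continue to do so under small perturbations, and for Theorem~\ref{thm:quadrBoundStab} one must select a strictly feasible sequence driving $x_0$ to the origin in exactly $N$ steps depending continuously on $x_0$ (which is possible on $\mathcal{X}_N^\circ$ by standard controllability/parametric-LP arguments, e.g.\ projecting the minimum-norm solution of the corresponding equality-constrained QP onto the strictly feasible polytope).
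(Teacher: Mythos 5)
Your proposal is correct and follows essentially the same route as the paper: a strictly feasible witness trajectory gives a $\delta$-independent upper bound on $\hat{J}_N^*(x_0;\delta)$ for $\delta$ small (with the terminal term vanishing at $x_N=0$ in the quadratic-cost case, which is exactly how the paper sidesteps the blow-up of $P$ as $\delta\to 0$), while $\bar{\beta}'(\delta)\to\infty$, and continuity plus compactness of $\mathcal{X}_0$ yields the uniform $\bar{\delta}_0$. The continuity of the witness/slack that you flag as the main obstacle is indeed the point the paper itself treats only briefly, so your proposed handling is a reasonable completion rather than a deviation.
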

\begin{proof}
We first consider the approaches based on Theorem~\ref{thm:globalStabilization} and Theorem~\ref{thm:deadBeatStab}. The proof is closely related to that of Lemma~\ref{lem:XbetaProp} and only a sketch is given here. In particular, it can be shown that $\hat{\mathcal{X}}'_{N}({\delta})$ is a nonempty and compact set with $\hat{\mathcal{X}}'_{N}(\delta)\subseteq \mathcal{X}_N^\circ$ and $\hat{\mathcal{X}}'_{N}(\delta) \to \mathcal{X}_N$ as $\delta \to 0$. The result is again based an the fact that the existence of a strictly feasible solution implies that for any $x_0 \in \mathcal{X}_N^\circ$ there exists a ${\delta}_0(x_0)$ such that $\hat{\alpha}(x_0;\delta)$ will stay constant for all $\delta \leq {\delta}_0$, whereas $\bar{\beta}(\delta)$ can be made arbitrarily large as $\delta\to 0$.\\
When considering the approach based on Theorem~\ref{thm:quadrBoundStab} the problem occurs that the matrix $P$ from (\ref{eq:riccatiP}), and hence the quadratic terminal cost function $\hat{F}(x)=x^TPx$, will grow without bound for $\delta \to 0$. Also, in this case it does not necessarily hold that $\hat{\mathcal{X}}'_{N}(\bar{\delta}) \subset \mathcal{X}_N$ as the resulting optimal state and input sequences for a given $x_0 \in   \hat{\mathcal{X}}'_{N}(\delta)$ will in general not satisfy the additional constraint $x_N=0$. 
However, for any $x_0 \in \mathcal{X}_N^\circ$ there exists input and state sequences $\boldsymbol{\bar{u}}(x_0)$ and $\boldsymbol{\bar{x}}(x_0)$ which strictly satisfy the conditions specified in~(\ref{eq:xN_def_quadr}), in particular $x_N(\boldsymbol{\bar{u}}(x_0),x_0)=0$.
For these sequences, we can always find a ${\delta}_0(x_0) \in \mathbb{R}_{\pplus}$ such that $\hat{J}_N(\boldsymbol{\bar{u}}(x_0),x_0;\delta) =\tilde{J}_N(\boldsymbol{\bar{u}}(x_0),x_0)$ for all $\delta \leq \delta_0(x_0)$, where $\tilde{J}_N(\boldsymbol{\bar{u}}(x_0),x_0)$ denotes the value function of a nonrelaxed problem formulation with the additional constraint $x_N=0$, cf. the proof of Lemma~\ref{lem:XbetaProp}. Furthermore, due to the suboptimality of the input sequence~$\boldsymbol{\bar{u}}(x_0)$, it holds that $\hat{J}_N^*(x_0;\delta)\leq \hat{J}_N(\boldsymbol{\bar{u}}(x_0),x_0;\delta)$. 
 Hence, $ \hat{J}_N^*(x_0;\delta) \leq \tilde{J}_N(\boldsymbol{\bar{u}}(x_0),x_0)$ for all $\delta \leq \delta_0(x_0)$, which shows that for any $x_0 \in \mathcal{X}_N^\circ$ the value function $\hat{J}_N^*(x_0;\delta)$, and hence also the expression $\hat{\alpha}(x_0;\delta)$, will stay bounded as $\delta \to 0$. As, on the other hand, $\bar{\beta}(\delta)$ increases without bound and both $\delta_0(x_0)$ and $\hat{J}_N^*(x_0;\delta)$ are continuous functions, there exists for any compact set $\mathcal{X}_0 \subseteq \mathcal{X}_N^\circ$ a $\bar{\delta}_0 \in \mathbb{R}_{\pplus}\leq \min_{x_0\in \mathcal{X}_0}\delta_0(x_0)$ such that $\mathcal{X}_0 \subseteq \hat{\mathcal{X}}'_N(\delta) \ \forall \delta\leq\bar{\delta}_0$, see also the proof of Lemma~\ref{lem:XbetaProp}.
\end{proof} \vspace*{0.1cm}
\noindent Note that for a general stabilizable system, the set~$\mathcal{X}_N$ in~(\ref{eq:xN_def_quadr}) may be restricted to a lower-dimensional subspace of $\mathbb{R}^n$ or it may even be empty. However, in case of a controllable system it is straightforward to show that $\mathcal{X}_N$ is a nonempty polytope. \\[0.2cm]
\noindent In general, the parameter $\bar{\delta}_0$ that is sufficient for ensuring strict constraint satisfaction based on the above results may be very small. For practical applications, it might therefore be reasonable to enforce the satisfaction of input and state constraints only with a predefined tolerance. 
Based on the above arguments, we can easily state the following result.
\begin{cor}
 Let problem~(\ref{eq:OptProblemRelBarrier}) be formulated based on one of the globally stabilizing MPC schemes discussed above (Theorems~\ref{thm:globalStabilization},~\ref{thm:deadBeatStab},~\ref{thm:quadrBoundStab}) 
  and let the respective sets $\mathcal{X}_N$ be given according to Theorem~\ref{thm:xBeta}. Then, for any given constraint violation tolerance $\hat{z}_{\textrm{tol}}\in \mathbb{R}_{+}$ and any given set of initial conditions $\mathcal{X}_0 \subseteq \mathcal{X}_N^\circ$ there exists a $\bar{\delta}_0 \in \mathbb{R}_{\pplus}$ such that for all $\delta \leq \bar{\delta}_0$ and any initial condition $x(0) \in \mathcal{X}_0$ the maximal possible constraint violation is less than $\hat{z}_{\textrm{tol}}$, i.e., that for all $k\geq 0$
 \begin{equation}
   C_{\mathrm{x}} x(k) \leq d_{\mathrm{x}}+\hat{z}_{\textrm{tol}}\mathds{1}, C_{\mathrm{u}} u(k) \leq d_{\mathrm{u}}+\hat{z}_{\textrm{tol}}\mathds{1} \ .
 \end{equation} 
\end{cor}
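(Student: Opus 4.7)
The plan is to chain together Theorem~\ref{thm:maxConstrViolation} and Theorem~\ref{thm:xBeta} and then refine the analysis by tracking how the upper bounds for the maximal constraint violations behave as the relaxation parameter $\delta$ is driven to zero. First I would observe that, by Theorem~\ref{thm:maxConstrViolation}, the closed-loop constraint violations are controlled by the scalar $\hat{\alpha}(x_0;\delta)=\hat{J}_N^*(x_0;\delta)-x_0^{\TRANSP}P_{\mathrm{uc}}^*x_0$, together with the level $\bar{\beta}(\delta)$ that determines the sublevel sets of $\hat{B}_{\mathrm{x}}(\cdot)$ and $\hat{B}_{\mathrm{u}}(\cdot)$. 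Concretely, the violation bounds $\hat{z}_{\mathrm{x}}^i(x_0;\delta)$ and $\hat{z}_{\mathrm{u}}^j(x_0;\delta)$ are the maxima of linear functions over the sublevel sets $\{\hat{B}_{\mathrm{x}}(x)\leq \hat{\alpha}(x_0;\delta)/\varepsilon\}$ and $\{\hat{B}_{\mathrm{u}}(u)\leq \hat{\alpha}(x_0;\delta)/\varepsilon\}$.

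Next I would import the key intermediate fact established inside the proof of Theorem~\ref{thm:xBeta}: on any compact $\mathcal{X}_0\subseteq\mathcal{X}_N^\circ$ (with $\mathcal{X}_N$ defined per case), the value function $\hat{J}_N^*(x_0;\delta)$ stays uniformly bounded as $\delta\to 0$. For the approaches of Theorems~\ref{thm:globalStabilization} and~\ref{thm:deadBeatStab} this is immediate because $\hat{J}_N^*(x_0;\delta)$ even becomes independent of $\delta$ once $\delta$ is small enough relative to the nonrelaxed solution associated with $x_0$. For the quadratic-terminal-cost approach of Theorem~\ref{thm:quadrBoundStab} one argues, as in the proof of Theorem~\ref{thm:xBeta}, by constructing a suboptimal input sequence that forces $x_N=0$; its cost is independent of $\delta$ for sufficiently small $\delta$ and upper-bounds $\hat{J}_N^*(x_0;\delta)$. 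Continuity of $\delta_0(x_0)$ and $\hat{J}_N^*(x_0;\delta)$, combined with compactness of $\mathcal{X}_0$, then yields a uniform upper bound $\bar{\alpha}:=\sup_{x_0\in\mathcal{X}_0}\sup_{\delta\leq\delta_1}\hat{\alpha}(x_0;\delta)<\infty$ for some $\delta_1\in\mathbb{R}_{\pplus}$.

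The third step is to show that $\hat{z}_{\mathrm{x}}^i(x_0;\delta)$ and $\hat{z}_{\mathrm{u}}^j(x_0;\delta)$ vanish as $\delta\to 0$, uniformly in $x_0\in\mathcal{X}_0$. Since the bound $\hat{\alpha}(x_0;\delta)/\varepsilon$ remains below $\bar{\alpha}/\varepsilon$ while $\bar{\beta}(\delta)\to\infty$ as $\delta\to 0$ (by the construction of the recentered relaxed barriers, cf.~Definition~\ref{def:betaBar} and Lemma~\ref{lem:relBarrierLevelSets}), the sublevel sets entering~(\ref{eq:maxConstrViol}) collapse toward the original polytopes $\mathcal{X}$ and $\mathcal{U}$. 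For the quadratic relaxing function $\beta_2(\cdot;\delta)$ this can be made fully explicit via the closed-form expression for $\bar{z}_{\mathrm{x}}^i$, which shows $\bar{z}_{\mathrm{x}}^i\to 0$ as $\delta\to 0$ whenever $\hat{\alpha}(x_0;\delta)$ stays bounded; for $\beta_e$ or $\beta_k$ with $k>2$ one argues by monotonicity of $\beta(\cdot;\delta)$ and the implicit definition in~(\ref{eq:ziEq}).

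Finally, given $\hat{z}_{\textrm{tol}}\in\mathbb{R}_+$, pick $\bar{\delta}_0\leq\delta_1$ small enough so that every component of $\hat{z}_{\mathrm{x}}(x_0;\delta)$ and $\hat{z}_{\mathrm{u}}(x_0;\delta)$ is below $\hat{z}_{\textrm{tol}}$ uniformly over $x_0\in\mathcal{X}_0$; the conclusion then follows directly from Theorem~\ref{thm:maxConstrViolation}. The main obstacle I anticipate is the uniformity over $\mathcal{X}_0$ in the quadratic terminal cost case, where both $P$ and $\hat{J}_N^*(\cdot;\delta)$ vary with $\delta$; handling it cleanly requires the suboptimal-sequence construction sketched in the proof of Theorem~\ref{thm:xBeta} together with the continuity arguments used there, rather than a direct limit in~$\delta$.
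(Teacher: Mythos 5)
Your proposal is correct and rests on exactly the ingredients the paper itself appeals to for this corollary (which it states without an explicit proof, citing "the above arguments"): Theorem~\ref{thm:maxConstrViolation}, the uniform boundedness of $\hat{\alpha}(\cdot;\delta)$ on compact $\mathcal{X}_0\subseteq\mathcal{X}_N^\circ$ established in the proof of Theorem~\ref{thm:xBeta} (including the suboptimal $x_N=0$ sequence needed for the quadratic terminal cost case), and the unbounded growth of $\bar{\beta}(\delta)$ as $\delta\to 0$. The only remark worth adding is that the paper's shortest route is even more direct—Theorem~\ref{thm:xBeta} already yields \emph{strict} constraint satisfaction on $\mathcal{X}_0$ for all $\delta\leq\bar{\delta}_0$, which trivially implies satisfaction with any tolerance $\hat{z}_{\textrm{tol}}\in\mathbb{R}_+$—whereas your quantitative tracking of $\hat{z}_{\mathrm{x}}^i(x_0;\delta)$ and $\hat{z}_{\mathrm{u}}^j(x_0;\delta)$ additionally explains why a (possibly much larger) $\bar{\delta}_0$ suffices when $\hat{z}_{\textrm{tol}}>0$, which is the practical point of the corollary.
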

\noindent Based on the above results, we can formulate the following iterative algorithm that allows to determine a priori a sufficiently small relaxation parameter for ensuring satisfaction of a given maximal constraint violation tolerance for a given set of initial conditions.
\newpage
\hrule height 0.3pt
\vspace*{0.1cm}
\noindent \textbf{Algorithm 1} \textit{(Compute $\bar{\delta}_0$ for $\mathcal{X}_0$ and $\hat{z}_{\textrm{tol}} \in\mathbb{R}_+$)} \\
\vspace*{-0.25cm}
\hrule height 0.1pt
\vspace*{0.1cm}
\begin{algorithmic}[1]
\REQUIRE problem formulation, set $\mathcal{X}_0$, tolerance $\hat{z}_{\textrm{tol}}$
\ENSURE  parameter $\bar{\delta}_0 \in \mathbb{R}_{+\!\!+}$ s.t. the maximal constraint violation is bounded by $\hat{z}_{\textrm{tol}}$ for any $x(0) \in \mathcal{X}_0$ \\
\STATE choose initial $\varepsilon, \delta \in \mathbb{R}_{\pplus}$ and set up problem~(\ref{eq:OptProblemRelBarrier})
\REPEAT
\STATE $\llcorner$ decrease relaxation parameter: $\delta\leftarrow\gamma\delta$, $\gamma \in (0,1)$\\
\STATE $\llcorner$ determine $\hat{\alpha}(\mathcal{X}_0,\delta)=\max_{x \in \mathcal{X}_0} \hat{\alpha}(x,\delta)$ based on~(\ref{eq:alphaEq})
\STATE $\llcorner$ compute $\hat{z}_{\mathrm{x}}^i(\mathcal{X}_0,\delta)$, $\hat{z}_{\mathrm{u}}^j(\mathcal{X}_0,\delta)$ from~(\ref{eq:maxConstrViol}) with $\hat{\alpha}(\mathcal{X}_0,\delta)$
\UNTIL{$ \hat{z}_{\mathrm{x}}^i(\mathcal{X}_0;\delta)\leq\hat{z}_{\textrm{tol}}$ and $\hat{z}_{\mathrm{u}}^j(\mathcal{X}_0;\delta)\leq\hat{z}_{\textrm{tol}}$ holds $\forall\ i, j$}
\end{algorithmic}
\hrule height 0.3pt 
 \vspace*{0.35cm}
\begin{rem}
Up to now, it has not been clarified whether the function $\hat{\alpha}(x;\delta)$ that is maximized in step~4 of Algorithm~1 is convex.
However, for all tested parameter configurations and examples with convex $\mathcal{X}_0$, the maximal value of $\hat{\alpha}(x;\delta)$ was in fact always attained at on of the vertices of the set~$\mathcal{X}_0$. Moreover, a conservative solution can always be found by evaluating a convex upper bound of $\hat{\alpha}(x;\delta)$, e.g., the value function $\hat{J}_N^*(x;\delta)$ itself, at the vertices of the set~$\mathcal{X}_0$.  
\end{rem}
Summarizing, we can state that, despite the use of the relaxed barrier functions, the maximal possible violation of input and state constraints in closed-loop operation is bounded and depends directly on the choice of the relaxation parameter~$\delta$.
Furthermore, the presented results allow to compute an estimate for the maximal constraint violation a priori or to determine a suitable relaxation parameter that guarantees satisfaction of a given constraint violation tolerance. 
Note that certain constraints can be prioritized by making use of different $\delta_i$ in the relaxation. This important fact may for example be used in order to enforce satisfaction of physically motivated hard input constraints. 
\subsection{Closed-Loop Performance and Robustness} \label{sec:relBarrierOpt} 
In this section, we briefly discuss some aspects concerning the performance and robustness properties of the presented relaxed barrier function based MPC approaches. Of course, a thorough investigation of these issues is well beyond the scope of this paper and can be considered as future work.
We begin with the following arguments, which show that the presented relaxed barrier function based MPC schemes will always recover the closed-loop performance of a related MPC scheme based on nonrelaxed barrier functions if the relaxation parameter~$\delta$ is small enough as well as that of a conventional linear MPC scheme when the barrier weighting~$\varepsilon$ goes to zero. \\[0.2cm]
 Let the relaxed barrier function based MPC problem~(\ref{eq:OptProblemRelBarrier}) be formulated based on one of the stabilizing design approaches discussed above (Theorem~\ref{thm:exactRel},~\ref{thm:globalStabilization},~\ref{thm:deadBeatStab}, or~\ref{thm:quadrBoundStab}), and let the set $\mathcal{X}_N$ be defined according to~(\ref{eq:xN_def_tSet}) for the terminal set based approaches and according to~(\ref{eq:xN_def_deadBeat}) and~(\ref{eq:xN_def_quadr}) for the respective terminal set free approaches.
 Consider $\hat{J}_N^*(x_0;\delta)$ for a given $x_0 \in \mathcal{X}_N^\circ$ and let $\tilde{J}_N^*(x_0)$ and $J_N^*(x_0)$ denote to the value functions for the corresponding nonrelaxed problem formulation and a conventional MPC formulation, both based on the constraints that define the set $\mathcal{X}_N$. 
By definition, there exists for any $x_0 \in \mathcal{X}_N^\circ$  an feasible state and input sequences that result in strict satisfaction of all inequality constraints in the respective description of $\mathcal{X}_N$,
which implies that there exists a ${\delta}_0(x_0)\in \mathbb{R}_{\pplus}$ such that $\hat{J}_N^*(x_0;\delta)=\tilde{J}_N^*(x_0)$ $\forall \delta \leq \delta_0(x_0)$, see the proof of Lemma~\ref{lem:XbetaProp}. In particular, for any compact set of initial conditions $\mathcal{X}_0 \subseteq \mathcal{X}_N^\circ$ there exists a $\bar{\delta}_0 \in \mathbb{R}_{\pplus}$ such that for any $\delta \leq \delta_0$, any $x(0) \in \mathcal{X}_0$, and any $k\geq0$ it holds that $\hat{J}_N^*(x(k);\delta)=\tilde{J}_N^*(x(k))$. This shows that the performance of a corresponding nonrelaxed formulation can always be recovered within the interior of the respective feasible set. Furthermore, it is well known that the solution of the nonrelaxed barrier function based problem~(\ref{eq:OptProblemBarrier}) converges to the solution of the corresponding conventional problem when the barrier function weighting parameter~$\varepsilon$ approaches zero~\cite[ch.~11]{boydBook}. Thus, for arbitrary small $\varepsilon, \delta \in \mathbb{R}_{\pplus}$, the presented relaxed barrier function based MPC schemes recover the closed-loop behavior and performance of related conventional MPC schemes. 
The question whether and under which circumstances the relaxed barrier function based setup may also lead to an overall improved performance of the closed-loop system, e.g., a decreased cumulated cost, can be considered as possible future work.
To the experience of the authors, the resulting closed-loop behavior is  already very good for moderate values of the barrier parameters, i.e. $\varepsilon, \delta$ in the order of~$10^{-2}$, and typically close to the solution of the original MPC problem for $\varepsilon, \delta$ in the order of~$10^{-4}$. \\[0.2cm]
Concerning the robustness of the closed-loop system, it has to be noted that, due to the relaxation of the underlying state and input constraints, the resulting overall cost function is defined for any $x \in \mathbb{R}^n$. 
Hence, the presented MPC schemes are robust against arbitrary effects caused by disturbances, uncertainties, or measurement errors in the sense that the corresponding open-loop optimal control problem always admits a feasible solution.
However, can we say something about the qualitative robust stability properties of the closed-loop system? For example, we might consider the disturbance affected dynamics
 \begin{equation} \label{eq:distSystem}
  x(k+1)=Ax(k)+B\hat{u}_0^*(x(k))+w(k) \, ,
 \end{equation} 
 where $w(k)\in \mathbb{R}^n$ denotes an unknown but bounded additive disturbance at time instant $k\geq0$.
When considering the approach based on Theorem~\ref{thm:exactRel}, any stability or convergence guarantees of the closed-loop system will in general be lost as the disturbance might cause the system state to leave the set~$\hat{\mathcal{X}}_N(\delta)$.
On the other hand, the approaches based on Theorems~\ref{thm:globalStabilization},~\ref{thm:deadBeatStab}, and~\ref{thm:quadrBoundStab} allow to guarantee global asymptotic stability of the undisturbed closed-loop system. Interestingly, based on the main result of~\cite{angeli99}, this directly implies that system~(\ref{eq:distSystem}) is integral input-to-state stable~(iISS) with respect to the disturbance $w$.
 As an important consequence, 
the state remains bounded and converges asymptotically to the origin for any disturbance sequence with bounded energy, see~\cite{angeli99} for more details.
Further investigations, e.g., proving stronger input-to-state stability results for system~(\ref{eq:distSystem}), are the subject of possible future work. 
\section{Example and Numerical Aspects} \label{sec:relBarrierNumExample}
In this section, we outline a selection of steps which can be used to get from a given control problem to a stabilizing relaxed logarithmic barrier function based MPC scheme, and illustrate both the design and the behavior of the closed-loop system by means of a numerical example. 
In addition, we also briefly comment on some interesting numerical aspects of the resulting open-loop optimal control problem.
\subsection{Overall MPC Design Procedure}
\noindent\textbf{Step 1: Basic problem setup.} \ Choose suitable values for the problem parameters that are not related to the barrier functions, i.e., the weighting matrices $Q \in \mathbb{S}^n_{+}$ and $R \in \mathbb{S}^n_{\pplus}$ as well as the prediction horizon $N \in \mathbb{N}_+$. \\[0.1cm]
\textbf{Step 2: Relaxed barrier functions.} \ Decide on procedures for relaxing and recentering the logarithmic barrier functions and choose suitable (initial) values for the barrier function parameters $\varepsilon\!\in\!\mathbb{R}_{ \pplus}$, $\delta\!\in\!\mathbb{R}_{\pplus}$. In general, the quadratic relaxation $\beta(\cdot;\delta)=\beta_2(\cdot;\delta)$ seems to work well in practice. \\[0.1cm]
\textbf{Step 3: Terminal cost function.} \ Choose a suitable approach that allows to design the terminal cost $\hat{F}(\cdot)$ in such a way that asymptotic stability of the closed-loop system is guaranteed. The different approaches presented in this paper are summarized in Table~\ref{tab:approaches} together with additional information regarding the necessity of a terminal set $\mathcal{X}_f$, the underlying assumptions, and the respective region of attraction~(ROA). \\[0.1cm]
\textbf{Step 4: Parameter tuning.} Formulate the open-loop optimal control problem~(\ref{eq:OptProblemRelBarrier}) based on Steps~1\--3. The relevant problem parameters may be adjusted in order to achieve a desired closed-loop performance, e.g. based on closed-loop simulations. In addition to the usual MPC parameters, the parameters $\varepsilon$ and $\delta$ as well as the chosen terminal cost function will have a major impact on the resulting behavior.
To the experience of the authors, and in accordance with the above results, the barrier function weighting parameter $\varepsilon$ may be used to influence the closed-loop performance while the relaxation parameter~$\delta$ primarily allows to control the occurrence and the maximal amount of state and input constraint violations. 
In particular, Algorithm~1 from Section~\ref{sec:relBarrierConstr} may be used to adapt the relaxation parameter in such a way that satisfaction of a given constraint violation tolerance can be guaranteed. 
\begin{table*}
\caption{Summary of the presented MPC schemes based on relaxed logarithmic barrier functions.}
\label{tab:approaches}
 \centering
  \resizebox{\textwidth}{!}{
\begin{tabular}{lcrcclccc}
\toprule
Approach & Section  & Terminal cost $\hat{F}(x)$& $\mathcal{X}_f$   & ($A,B)$ & Relaxing function & Theorem & ROA  & constraint violation\\
\midrule
Relaxed $\mathcal{X}_f$  & \ref{sec:relBarrierStab1}1 & $x^\TRANSP P x + \varepsilon \hat{B}_{\mathrm{f}}(x)$  & yes  & stabilizable & $\beta_k(\cdot\,;\delta)/ \beta_e(\cdot\,;\delta)$ & Thm.~\ref{thm:exactRel} & $\hat{\mathcal{X}}_N(\delta)$ & none \\[0.1cm] 
Nonrelaxed $\mathcal{X}_f$ & \ref{sec:relBarrierStab1}2 & $x^\TRANSP P x + \varepsilon {B}_{\mathrm{f}}(x)$ & yes & controllable & $\beta_k(\cdot\,;\delta)/ \beta_e(\cdot\,;\delta)$  & Thm.~\ref{thm:globalStabilization}& $\mathbb{R}^n$ & adjustable, none in $\hat{\mathcal{X}}'_N(\delta)$ \\[0.1cm] 
Tail sequences &  \ref{sec:relBarrierStab2}1 & $\sum_{l=0}^{{T}-1} \hat{\ell}(z_l(x),v_l(x))$ & no  & controllable & $\beta_k(\cdot\,;\delta)/ \beta_e(\cdot\,;\delta)$ & Thm.~\ref{thm:deadBeatStab}& $\mathbb{R}^n$  & adjustable, none in $\hat{\mathcal{X}}'_N(\delta)$  \\[0.1cm] 
Quadratic bound & \ref{sec:relBarrierStab2}2 & $x^\TRANSP\! P x$ & no & stabilizable & $\beta_2(\cdot\,;\delta)\, $, quadratic & Thm.~\ref{thm:quadrBoundStab} & $\mathbb{R}^n$  & adjustable, none in $\hat{\mathcal{X}}'_N(\delta)$\\
\bottomrule
\end{tabular}}
\vspace*{-0.15cm}
\end{table*}
\subsection{Numerical Example}
In the following, we briefly illustrate the outlined design procedure and the closed-loop behavior of the proposed MPC schemes by means of an academic numerical example. We consider a discrete-time double integrator system of the form
 \begin{equation}
x(k+1)=\begin{bmatrix}
1 & T_s\\
0 & 1
\end{bmatrix}x(k) + \begin{bmatrix}
T_s^2 \\ T_s
\end{bmatrix} u(k)\,, \ \ \ T_s=0.1\,.
\end{equation}
with the input and state constraint sets $\mathcal{U}=\{u \in \mathbb{R}: -2\leq u \leq 1\}$ and $\mathcal{X}=\{x \in \mathbb{R}^2: -2\leq x_1 \leq 3, |x_2| \leq 0.8\}$.\\
Following Step~1 of our design procedure, we choose the basic problem parameters to $N=10$, $Q=\text{diag}(1,\, 0.1)$, and $R=1$.
In a second step, we decide on using weight recentered logarithmic barrier functions with a quadratic relaxation based on $\beta(\cdot\,;\delta)=\beta_2(\cdot\,;\delta)$, see Definition~\ref{def:recBarrier} and Eq.~(\ref{eq:relFunHauser}). We fix the barrier function weighting parameter to $\varepsilon=10^{-2}$ and set up the different MPC schemes from Table~\ref{tab:approaches} for varying values of the relaxation parameter~$\delta$. For the terminal set based approaches from Section~\ref{sec:relBarrierStab1}, we employ a contractive polytopic terminal set as discussed in~\cite{feller14b}. \\
Inspired by our results above, we are in particular interested in the region of attraction of the locally stabilizing approach from Section~\ref{sec:relBarrierStab1}1 as well as in the regions with guaranteed strict or approximate constraint satisfaction when considering the globally stabilizing approaches from the Sections~\ref{sec:relBarrierStab1}2 and \ref{sec:relBarrierStab2}1/C2. Furthermore, we would like to compare the closed-loop behavior of the corresponding different MPC schemes and illustrate how the maximal constraint violation can be controlled by adjusting the relaxation parameter. \\ 
Fig.~\ref{fig:differentSets} depicts and compares some of the $\delta$-dependent sets that have been discussed in the results above, i.e., regions of initial conditions for which we can guarantee properties like asymptotic stability or satisfaction of input and state constraints. Note that the superscript in each case indicates the respective MPC approach by referring to the corresponding theorem number (we use extra indices $a$ and $b$ to distinguish between the two tail sequence based approaches).
While the sets $\hat{\mathcal{X}}^2_N(\delta)$ and $\hat{\mathcal{X}}'^{\#}_N(\delta)$ are given as sublevel sets of the value function~$\hat{J}_N^*(x;\delta)$, the sets $\mathcal{X}_0^{\#}$ are sublevel sets of the resulting maximal constraint violation based on Theorem~\ref{thm:maxConstrViolation}, both for a given $\delta=10^{-6}$.
In this case, the functions were evaluated over a fine grid and \textsc{Matlab}'s \texttt{contour} function was used for plotting.
Whereas the sets for which strict constraint satisfaction can be ensured may be very small, approximate constraint satisfaction with a meaningful tolerance of $\hat{z}_{\textrm{tol}}=10^{-3}$ is achieved for much larger regions of initial conditions. The approach based on a purely quadratic terminal cost $\hat{F}(x)=x^\TRANSP\! Px$ results in very small sets which is due to the conservative quadratic upper bound (see Lemma~\ref{lem:quadraticBound}), that causes $P$ to grow rather fast depending on the relation~$\varepsilon/\delta^2$.  \\
In Fig.~\ref{fig:closedLoop}, the behavior of the resulting closed-loop systems is illustrated for different initial conditions and a varying relaxation parameter~$\delta$. It can be seen how convergence to the origin is achieved even for infeasible initial conditions and how strict or approximate constraint satisfaction may be enforced by making $\delta$ sufficiently small. Note that, in order to get comparable closed-loop performance and execution times, we chose a doubled horizon of $N=20$ for the MPC schemes based on a dead-beat tail sequence and a purely quadratic terminal cost.  
\subsection{Numerical Aspects} \label{sec:relBarrierNumerics}
As outlined above, one of the main advantages of relaxed barrier function based MPC formulations is given by the fact that the stabilizing control input can be characterized as the minimum of a globally defined, continuously differentiable, and strongly convex cost function that is parametrized by the current system state.    
In fact, after elimination of the linear system dynamics, the open-loop optimal control problem can be formulated as the unconstrained minimization of a cost function of the form
\begin{equation}\label{eq:QPformulation}
 \hat{J}_N(U,x)= \frac{1}{2}U^\TRANSP\! HU+x^{\!\TRANSP}\! FU + x^{\!\TRANSP}\!Yx + \varepsilon \hat{B}_{\mathrm{xu}}(U,x)\, ,
\end{equation}
where $U:=\begin{bmatrix}u_0^\TRANSP & \cdots & u_{N-1}^\TRANSP\end{bmatrix} \in \mathbb{R}^{n_U}$, $n_{U}=Nm$, and $\hat{B}_{\mathrm{xu}}: \mathbb{R}^{Nm} \times \mathbb{R}^n\to \mathbb{R}_+$ is a positive definite, convex, and continuously differentiable relaxed logarithmic barrier function for polytopic constraints of the form $GU\leq w+Ex$.
The matrices $H \in \mathbb{S}^{n_{U}}_{\pplus}$, $F\in\mathbb{R}^{n\times n_{U}}$, $Y\in\mathbb{S}^{n}_+$, $G \in \mathbb{R}^{q \times n_{U}}$, $w\in \mathbb{R}^q$, and $E\in\mathbb{R}^{q \times n}$  can be constructed from~(\ref{eq:OptProblemRelBarrier}) and the corresponding constraints by means of simple matrix operations.
Note that  the matrices in the above QP formulation may be rather ill-conditioned when considering unstable systems and long prediction horizons. This issue can, however, be resolved by a suitable prestabilization, see \cite{rossiter98}. 
The then resulting mixed input and state constraints are not considered here in detail but can be handled by all discussed approaches after some minor modifications.
\begin{figure}[t]
  \centering
 \input{figs/tac_xNdeltaSets_061214.tex}
  \includegraphics[scale=0.75]{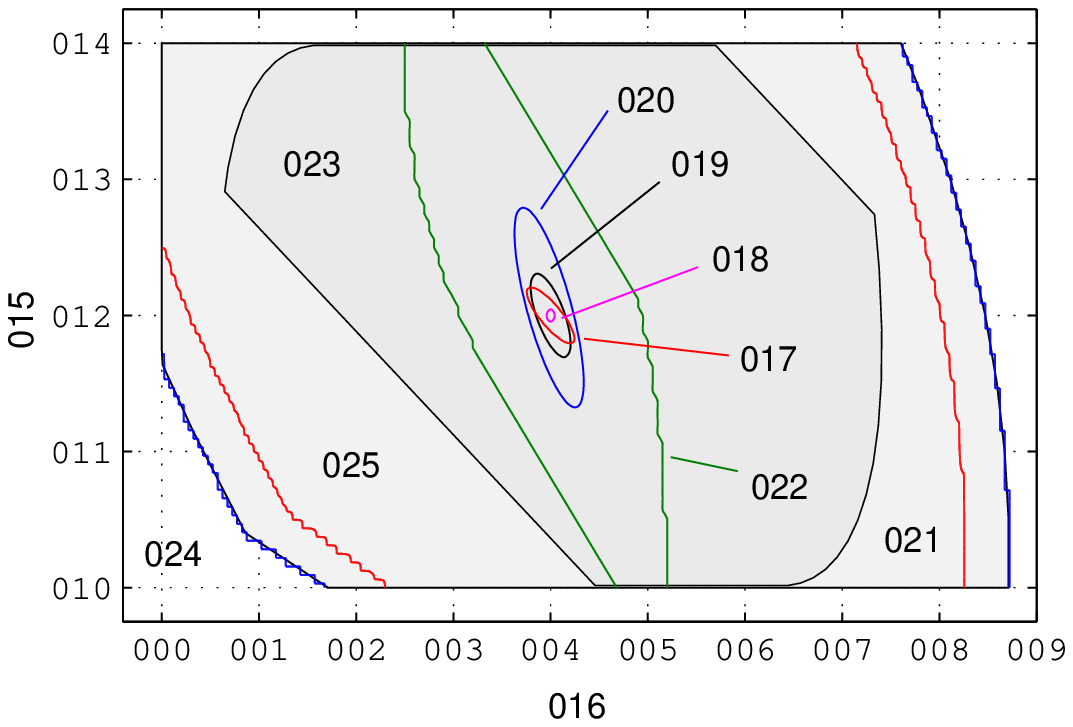}
    \vspace*{-0.5cm}\caption{The discussed sets for the different MPC schemes with $\varepsilon=10^{-2}$ and $\delta= 10^{-6}$. Here, $\mathcal{X}_f$ and $\mathcal{X}_N$ denote the polytopic terminal set and the resulting feasible set of the terminal set based approaches. Furthermore, $\hat{\mathcal{X}}^2_N(\delta)$ denotes the region of attraction of the locally stabilizing approach with strict constraint satisfaction, while $\hat{\mathcal{X}}'^3_N(\delta)$ and $\hat{\mathcal{X}}'^{4b}_N(\delta)$ 
  denote the sets with guaranteed zero constraint violation for the globally stabilizing approaches based on Theorems~\ref{thm:globalStabilization} and \ref{thm:deadBeatStab}, where the LQR based tail sequence is used in the latter case. 
   The corresponding sets $\hat{\mathcal{X}}'^{4a}_N(\delta)$ for a dead-beat controller based tail-sequence and $\hat{\mathcal{X}}'^{5}_N(\delta)$ for a purely quadratic terminal cost where in this case too small to be plotted.
In addition, the much larger sets $\mathcal{X}^3_0(\delta)$, $\mathcal{X}^{4a}_0(\delta)$, $\mathcal{X}^{4b}_0(\delta)$, and $\mathcal{X}^{5}_0(\delta)$ denote the regions in which the respective MPC schemes result in a maximal constraint violation of $\hat{z}_{\textrm{tol}}=10^{-3}$.} 
  \label{fig:differentSets}
 \vspace*{-0.25cm}
\end{figure}
 \begin{figure*}[t!]
  \begin{minipage}{0.32\textwidth}
 \centering
 \input{figs/tac_closedLoop_061214_N1020.tex}
  \includegraphics[scale=0.42]{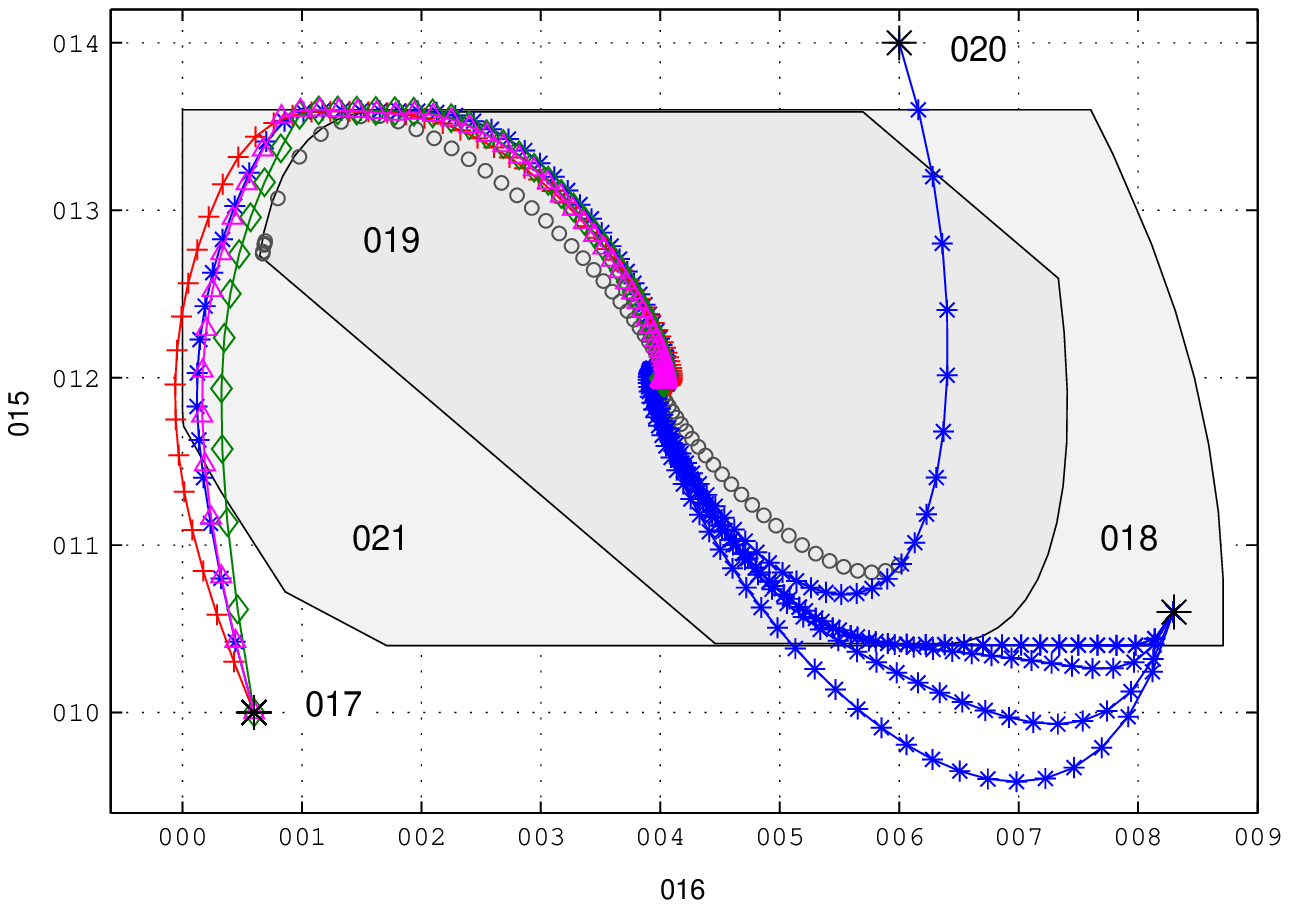}
  \end{minipage}
   \begin{minipage}{0.32\textwidth}
   \centering
 \input{figs/tac_closedLoop_inputs_061214_N1020.tex}
  \includegraphics[scale=0.42]{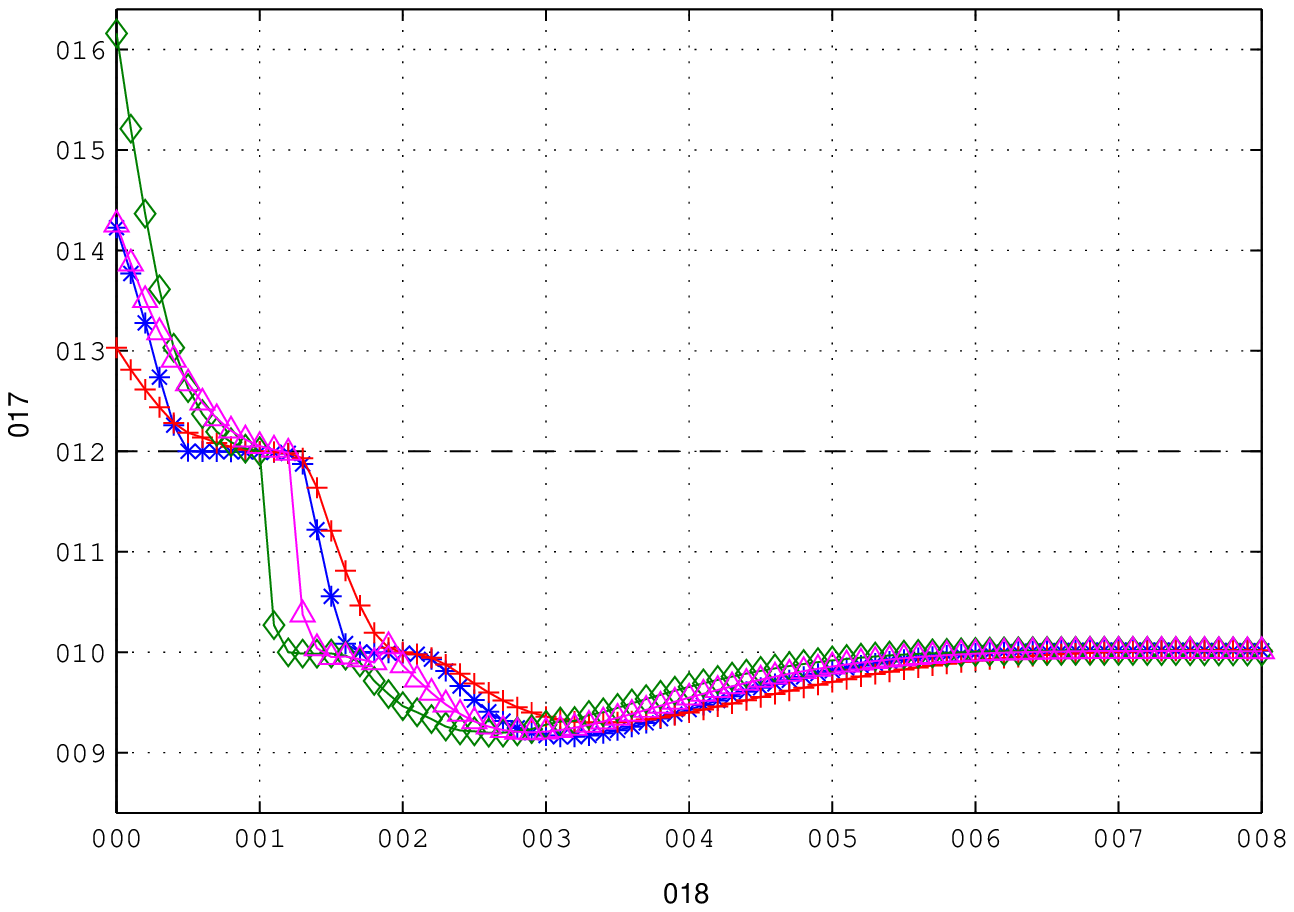}
  \end{minipage}
  \hspace*{0.1cm}
     \begin{minipage}{0.32\textwidth}
   \centering
 \input{figs/tac_deltaZero_081214_b.tex}
  \includegraphics[scale=0.42]{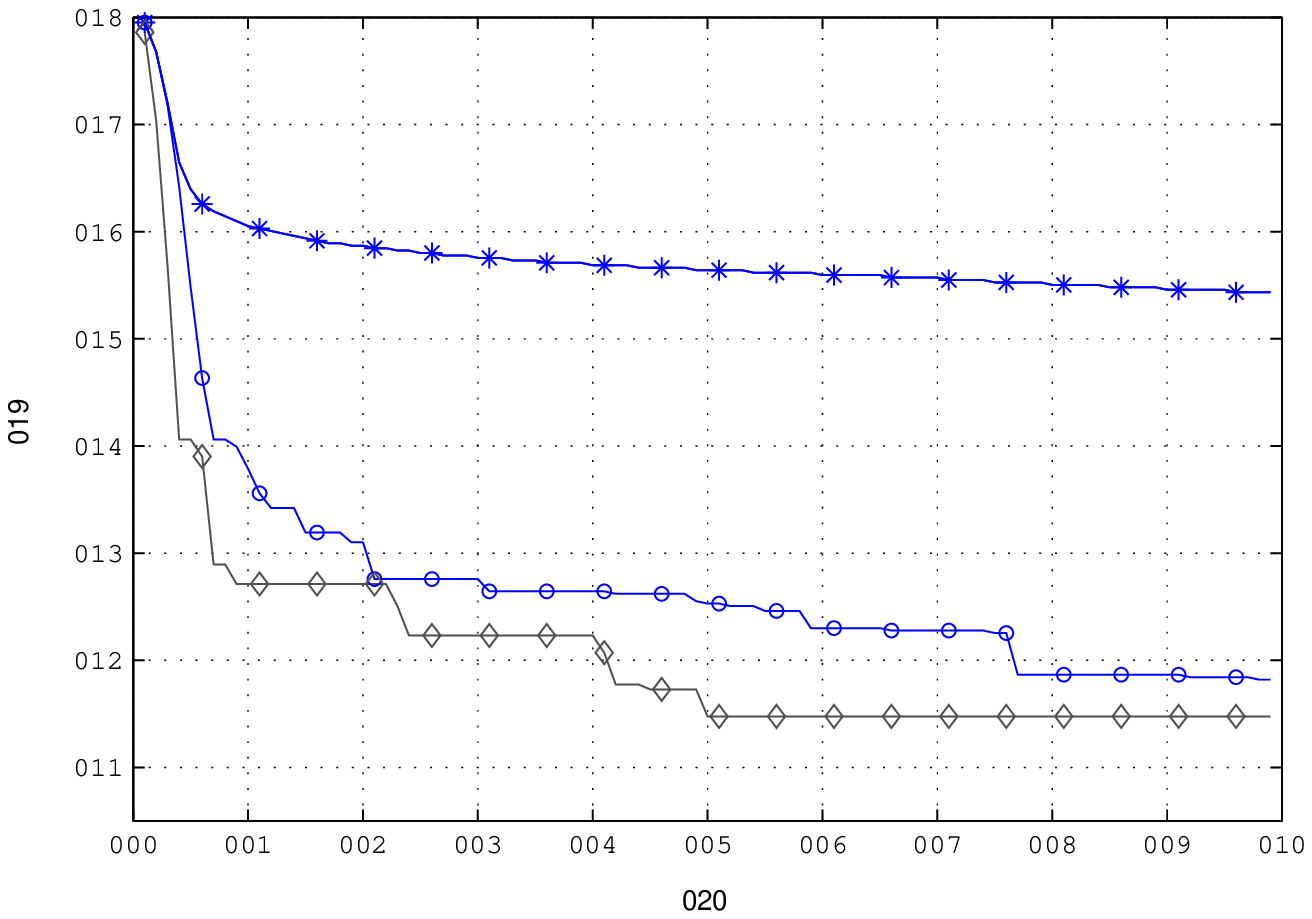}
  \end{minipage}
  \vspace*{0.1cm}
  \caption{\emph{Left:} closed-loop behavior for $x_{0,1}=[2.15,-0.7]^\TRANSP \in \mathcal{X}_N$ 
  and $\delta \in\{0.1,2.5\times10^{-2},10^{-2},10^{-3},10^{-4}\}$ for the approach based on a nonrelaxed terminal set~[{\textcolor{blue}{{$*$}}}]: if $\delta$ is small enough, the state trajectories stay feasible. Also shown is the behavior for two infeasible initial conditions $x_{0,2}$ and $x_{0,3}$ with $\delta=10^{-3}$ together with the corresponding predicted terminal states $x_N^*(x(k))$ [\textcolor{darkgreen}{$\circ$}]. For the infeasible initial condition $x_{0,2}=[-1.75,-1]^\TRANSP$, also the closed-loop trajectories of the globally stabilizing terminal set free MPC schemes from Section~\ref{sec:relBarrierStab2} are depicted (dead-beat based tail sequence~[$\textcolor{darkgreen}{\diamond}$], LQR based tail sequence~[{\footnotesize \textcolor{red}{$+$}}], purely quadratic terminal cost~[{\footnotesize \textcolor{magenta}{$\vartriangle$}}]). \newline
 \emph{Middle:} control input for the different globally stabilizing MPC schemes for the initial condition $x_{0,2}=[-1.75,-1]^\TRANSP$ using the same coloring scheme. \newline
 \emph{Right:} The relaxation parameter $\bar{\delta}_0$ obtained by Algorithm~1 for ensuring strict constraint satisfaction~[\textcolor{blue}{$\circ$}], i.e.~$\hat{z}_{\textrm{tol}}=0$, respectively approximate constraint satisfaction with a tolerance of $\hat{z}_{\textrm{tol}}=10^{-3}$~[\textcolor{blue}{$*$}] for the approach based on Theorem~\ref{thm:globalStabilization} with initial conditions in $\mathcal{X}_0=\kappa \mathcal{X}_N$ where $\kappa\in(0,1)$. Also depicted is the $\bar{\delta}_0$ required for ensuring asymptotic stability and strict constraint satisfaction directly based on the approach with a relaxed terminal set~[{\footnotesize \textcolor{darkgray}{$\diamond$}}], cf. Theorem~\ref{thm:exactRel}.}
  \label{fig:closedLoop}
  \vspace*{-0.15cm}
 \end{figure*}
While the above condensed QP formulation allows for a compact representation with a minimal number of optimization variables, the original formulation~(\ref{eq:OptProblemRelBarrier}) might allow to exploit the inherent problem structure, which may be beneficial from a numerical point of view. Tailored structure exploiting optimization algorithms have been presented in the context of both conventional linear MPC, e.g.\cite{rao98} and \cite{wang10}, and nonrelaxed barrier function based linear MPC, see~\cite{willsPHD}. Whether such techniques could also be applied for the relaxed barrier function based MPC approaches discussed in this paper, is an interesting open question which might be considered as possible future work.\\
Another interesting property of the condensed formulation~(\ref{eq:QPformulation}) is that it can be tackled directly by Newton-based continuous-time optimization algorithms as they are, for example, discussed in~\cite{feller13} and \cite{feller14}. 
In particular, the differentiability and convexity properties of the cost function allow to design a continuous-time dynamical system of the form
\begin{equation}
 \dot{U}(t)=f(U(t),x(t)), \quad U(t_0)=U_0
\end{equation} 
whose solution asymptotically tracks the optimal input vector~$\hat{U}^*(x(t))$, where $x(t)$ is the continuously measured system state, see the aforementioned references for more details. 
Such continuous-time linear MPC algorithms completely eliminate the need for an iterative on-line optimization and can be implemented for in principle arbitrary fast system dynamics. 
We expect that it might be possible to exploit some of the advantages of relaxed logarithmic barrier function based formulations, e.g., the global definition of the cost function and a bounded curvature, explicitly within the underlying numerical integration.
\section{Conclusion}
In summary, our investigation showed that the concept of relaxed logarithmic barrier function based model predictive control is interesting both from a system theoretical and a practical point of view.  
In particular, while we are still able to recover many of the theoretical properties of conventional or nonrelaxed barrier function based MPC schemes, the use of relaxed logarithmic barrier functions allows to characterize the stabilizing control input as the minimizer of a globally defined, continuously differentiable and strongly convex function that is parametrized by the current system state. 
As a main result, we presented different constructive MPC design approaches that guarantee global asymptotic stability and allow to influence the performance and constraint satisfaction properties of the closed-loop system directly by adjusting the relaxation parameter. The resulting MPC schemes are not necessarily based on the construction of a suitable terminal set and, due to the underlying relaxation, inherently robust against disturbances, uncertainties, or sensor faults. \\[0.2cm]
Interesting open problems may include a thorough analysis of the performance and robustness properties of the resulting closed-loop system, the derivation of less conservative constraint violation bounds, or the design and comparison of tailored iterative or continuous-time optimization algorithms that explicitly exploit the properties of the relaxed MPC problem formulation.    
\section*{Appendix}
%
In the following, we show how the finite-horizon LQR problem with zero terminal state constraint can be solved without the use of algebraic Riccati equations. Consider problem~(\ref{eq:zeroTstateLQR}) for a given initial state $x$ and horizon $T\geq n$. By eliminating the predicted states $z_l$ for $l=1\dots,T$ using~(\ref{eq:zeroTstateLQR}b), we get the following equivalent formulation in vectorized form:
\begin{align} \label{eq:zeroTstateLQR_QP}
J_V^*(x)&=\min_{V} \frac{1}{2}V^\TRANSP\! H V+\frac{1}{2}x^\TRANSP\!F V + \frac{1}{2}V^\TRANSP\!F^\TRANSP\! x + \frac{1}{2}x^\TRANSP\! Y x \nonumber \\
\mbox{s.\,t.} \ \ & A^Tx+\begin{bmatrix}S_1 & S_2 \end{bmatrix} V =0 \, ,
\end{align}
where the respective matrices are given as
 \begin{align*}
  S_1&=\begin{bmatrix}A^{T-1}B & \cdots & A^nB \end{bmatrix}, \ S_2=\begin{bmatrix}A^{n-1}B & \cdots & B \end{bmatrix} \\[0.1cm]
  H&=2\left(\tilde{R}+ \Phi^{\!\TRANSP}\! \tilde{Q} \Phi \right), \  F=2\Omega^{\!\TRANSP}\! \tilde{Q}\Phi, \ Y=2\left({Q}+ \Omega^{\!\TRANSP}\! \tilde{Q} \Omega \right) \\
  \Omega&=\begin{bmatrix}A \\[-0.1cm] \vdots \\ A^T \end{bmatrix}, \
  \Phi=\begin{bmatrix} B & \cdots & 0 \\[-0.1cm]
  \vdots & \ddots & \vdots \\
  A^{T-1}B  & \cdots & B
  \end{bmatrix}, \ \begin{array}{l} 
                  \tilde{Q}= I_T \otimes Q \\[0.3cm]
                   \tilde{R}=I_T \otimes R\,.
                \end{array}
 \end{align*}
Due to the controllability of the system, there always exists a feasible solution $V(x)$ to problem~(\ref{eq:zeroTstateLQR_QP}). We partition the vector $V \in \mathbb{R}^{Tm}$ as $V^\TRANSP=\begin{bmatrix} V_1^\TRANSP & V_2^\TRANSP \end{bmatrix}$ with $V_1 \in \mathbb{R}^{(T-n)m}$ and $V_2 \in \mathbb{R}^{nm}$. The terminal state constraint can be eliminated by choosing $V_2=-S_2^{+}\left(A^Tx+S_1V_1\right)$, where $S_2^+$ denotes the Moore-Penrose pseudoinverse of the matrix $S_2$. This leads to
\begin{equation}\label{eq:elimXNconstr}
V=\begin{bmatrix}I_{(T-n)m} \\ -S_2^{+}S_1 \end{bmatrix} V_1 + \begin{bmatrix} 0 \\ -S_2^{+}A^T \end{bmatrix}x = \Gamma_V V_1 + \Gamma_x x \ .
\end{equation}
Inserting~(\ref{eq:elimXNconstr}) into problem~(\ref{eq:zeroTstateLQR_QP}) results in the following unconstrained optimization problem in the variable $V_1$
\begin{equation}\label{eq:zeroTstateLQR_redQP}
J_V^*(x)=\min_{V_1} \frac{1}{2}V_1^\TRANSP\! \tilde{H} V_1+x^\TRANSP\!\tilde{F} V_1 + \frac{1}{2}x^\TRANSP\! \tilde{Y} x \ ,
 \end{equation} 
where the transformed problem matrices are given by $\tilde{H}=\Gamma_V^\TRANSP H\,\Gamma_V$, $\tilde{F}=\Gamma_x^\TRANSP  H\, \Gamma_V+F^\TRANSP\! \Gamma_V$, and $\tilde{Y}=Y+\Gamma_x^\TRANSP\! H\, \Gamma_x + \Gamma_x^\TRANSP\!F^\TRANSP+F\Gamma_x$.
Note that since $V(x)=\Gamma_x x$ is a feasible solution and $\mathrm{Im}(\Gamma_V)=\mathrm{Null}(\begin{bmatrix}S_1 & S_2\end{bmatrix})$, the solution is invariant under this change of coordinates, and the problems~(\ref{eq:zeroTstateLQR_QP}) and~(\ref{eq:zeroTstateLQR_redQP}) are equivalent, cf.~\cite[p.\,132]{boydBook}. Furthermore, it always
holds that $\tilde{H}\in\mathbb{S}^{(T-n)m}_{\pplus}$ as the matrix $\Gamma_V$ has full column rank. 
The optimal solution to the reduced problem can be computed easily as $V_1^*(x)=-\tilde{H}^{-1}\tilde{F}^\TRANSP x$ which results in 
\begin{equation}
V^*(x)=\left(\Gamma_x-\Gamma_V\tilde{H}^{-1}\tilde{F}^\TRANSP\right) x =K_V x\ .
\end{equation}
Furthermore, the optimal cost is given by $J_V^*(x)=x^\TRANSP\! P_Vx$ with $P_V=\frac{1}{2}\left(\tilde{Y}+\tilde{F}^\TRANSP \tilde{H}^{-1} \tilde{F} \right)\in \mathbb{S}^n_{+}$.
\bibliographystyle{plain}
\bibliography{ctrbMPC}
\end{document}